\numberwithin{equation}{section}
\tikzset{
  vertice/.style={circle,draw=black},
  decoration={markings,mark=at position 0.5 with {\arrow{>}}}
}
\let\cal\mathcal
\def\Fscr{{\cal F}}
\def\Oscr{{\cal O}}
\let\blb\mathbb
\def \ZZ{{\blb Z}}
\def \NN{{\blb N}}
\def\Id{\operatorname{id}}
\def\Rep{\operatorname {Rep}}
\def\GL{\operatorname {GL}}
\def\Ext{\operatorname {Ext}}
\def\Hom{\operatorname {Hom}}
\def\End{\operatorname {End}}
\def\im{\operatorname {im}}
\def\ker{\operatorname {ker}}
\def\End{\operatorname {End}}
\DeclareMathOperator{\Ind}{Ind}
\DeclareMathOperator{\uaut}{\underline{aut}}
\DeclareMathOperator{\uend}{\underline{end}}
\DeclareMathOperator{\image}{Im}
\DeclareMathOperator{\eval}{ev}
\DeclareMathOperator{\coeval}{coev}
\DeclareMathOperator{\induced}{ind}
\DeclareMathOperator{\socle}{soc}
\DeclareMathOperator{\topp}{top}
\newtheorem{lemma}{Lemma}[section]
\newtheorem{proposition}[lemma]{Proposition}
\newtheorem{theorem}[lemma]{Theorem}
\newtheorem{corollary}[lemma]{Corollary}
\theoremstyle{definition}
\newtheorem{example}[lemma]{Example}
\newtheorem{definition}[lemma]{Definition}
\theoremstyle{remark}
\newtheorem{remark}[lemma]{Remark}
\newdimen\uboxsep \uboxsep=1ex
\def\uboxn#1{\vtop to 0pt{\hrule height 0pt depth 0pt\vskip\uboxsep
\hbox to 0pt{\hss #1\hss}\vss}}
\def\uboxs#1{\vbox to 0pt{\vss\hbox to 0pt{\hss #1\hss}
\vskip\uboxsep\hrule height 0pt depth 0pt}}
\def\tn{\textnormal}
\def\z{\mathbb{Z}}
\def\n{\mathbb{N}}
\def\ot{\otimes}
\def\oo{\Oscr_{\tt nc}(G)}
\def\ll{\lambda}
\newcounter{todocounter}
\DeclareDocumentCommand\addreference{g}{\stepcounter{todocounter}\todo[color
  = blue!30, fancyline]{\thetodocounter. Add
    reference\IfNoValueF{#1}{: #1}}\xspace}
\DeclareDocumentCommand\checkthis{g}{\stepcounter{todocounter}\todo[color
  = red!50, fancyline]{\thetodocounter. Check this\IfNoValueF{#1}{:
      #1}}\xspace}
\DeclareDocumentCommand\fixthis{g}{\stepcounter{todocounter}\todo[color
  = orange!50, fancyline]{\thetodocounter. Fix this\IfNoValueF{#1}{:
      #1}}\xspace}
\DeclareDocumentCommand\expand{g}{\stepcounter{todocounter}\todo[color
  = green!50, fancyline]{\thetodocounter. Expand\IfNoValueF{#1}{:
      #1}}\xspace}
\DeclareDocumentCommand\remarkk{g}{\stepcounter{todocounter}\todo[color
  = yellow!50, fancyline]{\thetodocounter. Remark\IfNoValueF{#1}{:
      #1}}\xspace}
\title[The representation theory of
  non-commutative  $\Oscr(\GL_2)$]{The representation theory of
  non-commutative \boldmath $\Oscr(\GL_2)$}
\author{Theo Raedschelders and Michel Van den Bergh}
\address{(Theo Raedschelders) \newline Departement Wiskunde, Vrije Universiteit Brussel, 
Pleinlaan $2$, B-1050 Elsene \newline E-mail address: {\tt traedsch@vub.ac.be}}
\address{(Michel Van den Bergh) \newline Departement WNI, Universiteit Hasselt, Universitaire Campus \\
B-3590 Diepenbeek \newline E-mail address: {\tt michel.vandenbergh@uhasselt.be}}
\let\oldmarginpar\marginpar
\def\marginpar#1{\oldmarginpar{\color{red}\tiny #1}}
\begin{document}
\maketitle
\begin{abstract}
  In our companion paper ``The {M}anin {H}opf algebra of a {K}oszul
  {A}rtin-{S}chelter regular algebra is quasi-hereditary'' we used the
  Tannaka-Krein formalism to study the universal coacting Hopf algebra
  $\uaut(A)$ for a Koszul Artin-Schelter regular algebra $A$. In this
  paper  we study in detail the case $A=k[x,y]$. In
  particular we give a more precise description of the standard and
  costandard representations of $\uaut(A)$ as a coalgebra and we show that the latter can be
  obtained by induction from a Borel quotient algebra.  Finally we
  give a combinatorial characterization of the simple $\uaut(A)$-representations as tensor
  products of $\uend(A)$-representations and their duals.
\end{abstract}

\setcounter{tocdepth}{1}
{\hypersetup{linkcolor=black}
\tableofcontents
}

\newcommand\blfootnote[1]{%
  \begingroup
  \renewcommand\thefootnote{}\footnote{#1}%
  \addtocounter{footnote}{-1}%
  \endgroup
}

\blfootnote{\textit{2010 Mathematics Subject Classification}. Primary 16S10,16S37,16S38,16T05,16T15,20G42.}
\blfootnote{\textit{Key words and phrases}. Hopf algebras, monoidal categories, quasi-hereditary algebras.}
\blfootnote{The first author is an Aspirant at the FWO.} 
\blfootnote{The second author is a senior researcher at the FWO.}

\section{Introduction}
\label{section:introduction}

In \cite{manin}  Manin
constructs for any graded algebra $A=k\oplus A_1\oplus A_2\oplus\cdots$ 
   a bialgebra~$\uend(A)$ and a Hopf algebra~$\uaut(A)$ coacting on it in a universal way.
The Hopf algebra $\uaut(A)$ should be thought of as the non-commutative
symmetry group of~$A$.

The representation theory of the bialgebra $\uend(A)$ was fully
described in \cite{kriegk-vandenbergh} in the case that $A$ is a Koszul algebra.
In our recent paper \cite{raedschelders-vandenbergh-2} we extended this to~$\uaut(A)$
when~$A$ is in addition Artin-Schelter regular \cite{AS}. We show in particular
that~$\uaut(A)$ is quasi-hereditary as a coalgebra and we give a description of its monoidal
category of comodules. The methods in loc.\ cit.\ are based on Tannakian
duality and  are fairly agnostic to the specific choice of $A$.


On the other hand when $A=k[x_1,\ldots,x_d]$ it is reasonable to
think of $\uaut(A)$ as some sort of non-commutative coordinate ring  
of $\GL_n$. From this point of view
one may hope that techniques from the theory of algebraic groups would yield extra insight into
the representation theory of $\uaut(A)$.
Obvious examples  of such techniques are highest weight theory and induction from
Borel subgroups, but more
combinatorial approaches  based on standard monomial
theory and straightening laws are also useful to keep in mind. 

In this paper we discuss the most basic case, namely $A=k[x,y]$.
We will write~$\Oscr_{\tt nc}({{\GL_2}})$ for $\uaut(A)$ to emphasize the
fact that we view the latter as a non-commutative variant of the algebraic group ${{\GL_2}}$.

As an algebra $\Oscr_{\tt nc}({{\GL_2}})$ is generated by the entries of the matrix
(see \S\ref{sec:genrel})
\[
M=\begin{pmatrix}
a&b\\
c&d
\end{pmatrix}
\]
together with the formal inverse of the determinant $\delta:=ad-cb$.
The following additional relations are imposed:
\[
\begin{aligned}
ac-ca & =  0\\
bd-db &= 0 \\ 
ad-cb & =   da-bc\\
a\delta^{-1}d-b\delta^{-1}c & =  1  =  d\delta^{-1}a-c\delta^{-1}b,\\
b\delta^{-1}a-a\delta^{-1}b & =  0  =  c\delta^{-1}d-d\delta^{-1}c
\end{aligned}
\]
The bialgebra structure on $\Oscr_{\tt nc}({{\GL_2}})$ is given by $ \Delta(M)=M\otimes M $.  
The
first three equations express that $M$ is a ``Manin matrix''
\cite{chevrov-falqui-rubtsov}. The last four equations are forced upon us by the requirement that
$\Oscr_{\tt nc}({{\GL_2}})$ must have an antipode.

It follows from \cite{raedschelders-vandenbergh-2} that the coalgebra $\Oscr_{\tt nc}({{\GL_2}})$ is
quasi-hereditary. In the current paper we will give a proof of
this fact which is  different in spirit from the general one in~\cite{raedschelders-vandenbergh-2}. In particular
we will obtain more explicit descriptions of the
(co)standard and the simple comodules that come with the quasi-hereditary structure. The reader
not familiar with quasi-hereditary (co)algebras may consult \S\ref{section:finite-dimensional-quasihereditary},\S\ref{sec:infqh} for a short introduction
and further references. Here we will content ourselves with noting
that the fact that $\Oscr_{\tt nc}({{\GL_2}})$ is quasi-hereditary immediately implies that it has a large
number of standard representation theoretic properties which are reminiscent of the representation theory of reductive groups.

We now give a more precise description of our results. Let $\Lambda$ be the monoid $\langle d,\delta^{\pm1}\rangle$ ($d,\delta$ are used as formal symbols here). We equip~$\Lambda$ with the left and right invariant ordering generated by
$
1<d\delta^{-1}d$, $\delta<dd
$.
In addition we equip $\Lambda$ with an order preserving duality given by
$d^\ast=d\delta^{-1}$, $\delta^\ast=\delta^{-1}$, $(\lambda\mu)^\ast=\mu^\ast\lambda^\ast$.  For
$\lambda=\delta^{x_1} d^{y_1} \cdots \delta^{x_n} d^{y_n}$ we let
$\nabla(\lambda)$ be the subcomodule of the regular comodule
$\Oscr_{\tt nc}({{\GL_2}})$ spanned by vectors $
\delta^{x_1}b^{y'_1}d^{y''_1} \cdots \delta^{x_n} b^{y'_n}d^{y''_n}, $
where $y'_i+y''_i=y_i$. We also put $\Delta(\lambda^*)$ for
$\nabla(\lambda)^\ast$. 

By construction $\nabla(\lambda)$ contains the vector $\lambda$. Let $L(\lambda)$ be the subcomodule of
$\nabla(\lambda)$ cogenerated by $\lambda$. The following is our first main result.
\begin{theorem}[Proposition \ref{llll}, Theorem \ref{th:qh}]
\label{referentie}
The coalgebra $\Oscr_{\tt nc}({{\GL_2}})$ is quasi-hereditary with respect to the poset $(\Lambda,\le)$.
The standard, costandard and simple comodules are given by $\Delta(\lambda)$, $\nabla(\lambda)$ and $L(\lambda)$
as introduced above. 
\end{theorem}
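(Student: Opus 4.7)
The plan is to bypass the Tannakian machinery of \cite{raedschelders-vandenbergh-2} and verify the quasi-hereditary axioms for $\Oscr_{\tt nc}({{\GL_2}})$ directly, with the explicit subspaces $\nabla(\lambda)$, $\Delta(\lambda^*)$, $L(\lambda)$ introduced before the statement playing the role of costandards, standards and simples. I would proceed in three stages.

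First, I would verify that each $\nabla(\lambda) \subseteq \Oscr_{\tt nc}({{\GL_2}})$ is genuinely a subcomodule of the regular comodule. Since the coaction is the coproduct and the latter is determined by $\Delta(M) = M \otimes M$, this reduces to applying $\Delta$ to each spanning monomial $\delta^{x_1} b^{y'_1} d^{y''_1} \cdots \delta^{x_n} b^{y'_n} d^{y''_n}$ and checking that the result lies in $\Oscr_{\tt nc}({{\GL_2}}) \otimes \nabla(\lambda)$. The ``Manin matrix'' relations $ac = ca$, $bd = db$, $ad - cb = da - bc$ together with the defining relations for $\delta^{\pm 1}$ force the second tensor factor to be a linear combination of words in $\delta^{\pm 1}, b, d$ obeying the positional constraints $y'_i + y''_i = y_i$. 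Dualizing then yields the analogous statement for $\Delta(\lambda^*) = \nabla(\lambda)^\ast$, and the order-preserving involution $\lambda \mapsto \lambda^\ast$ ensures that the $\Delta$-family is indexed by the same poset.

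Second, I would show that $L(\lambda)$ is simple and that these exhaust the isomorphism classes of simple comodules. The key observation is that $\lambda$ itself plays the role of a distinguished ``highest weight'' monomial inside $\nabla(\lambda)$: modulo a suitable monomial filtration refining $\le$, every nonzero subcomodule of $\nabla(\lambda)$ must contain $\lambda$, so the subcomodule cogenerated by $\lambda$ (i.e.\ the socle) is simple. Non-isomorphism of the various $L(\lambda)$ follows because $\lambda$ does not occur in $\nabla(\mu)$ for $\mu \neq \lambda$; completeness can either be obtained from a direct dimension count in each graded component of $\Oscr_{\tt nc}({{\GL_2}})$, or by comparing with the classification of simples already established in \cite{raedschelders-vandenbergh-2}.

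The remaining, and most delicate, step is to verify the quasi-hereditary axioms with respect to $(\Lambda, \le)$: concretely, that the injective hull of each $L(\lambda)$ admits a $\nabla$-filtration with $\nabla(\lambda)$ on top and other composition factors $\nabla(\mu)$ only for $\mu > \lambda$, each appearing with multiplicity $\dim \Delta(\lambda^\ast)_\mu$. The main obstacle will be a combinatorial straightening law showing that rewriting any monomial via the defining relations, particularly those involving $\delta^{-1}$, produces only monomials whose associated shape $\lambda' \in \Lambda$ satisfies $\lambda' \ge \lambda$. Once such a straightening is in place, the filtration of $\Oscr_{\tt nc}({{\GL_2}})$ by subcoalgebras indexed by the order ideals of $\Lambda$ has precisely the desired subquotient structure, and quasi-heredity follows by the standard criteria of Cline--Parshall--Scott. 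I anticipate that it is this combinatorial control on the interaction between $\le$ and the relations, rather than the identification of the simples, that will constitute the core difficulty.
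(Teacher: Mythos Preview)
Your outline captures the broad shape of the argument but departs from the paper's route at the two crucial junctures, and at those points your sketch is too vague to be confident it would close.

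For the simplicity of $L(\lambda)$ (your second stage), the paper does not argue via a ``monomial filtration'' on $\nabla(\lambda)$. Instead it passes to the Borel quotient $\Oscr_{\tt nc}(B^+)=\Oscr_{\tt nc}({{\GL_2}})/(c)$: the composite $\nabla(\lambda)\hookrightarrow \Oscr_{\tt nc}({{\GL_2}})\twoheadrightarrow\Oscr_{\tt nc}(B^+)$ is checked to be injective, $\Oscr_{\tt nc}(B^+)$ is a pointed filtered coalgebra so every representation contains a semi-invariant (a noncommutative Lie--Kolchin), and inside $\nabla(\lambda)$ the only $\Oscr_{\tt nc}(B^+)$-semi-invariant is $\lambda$ itself. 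Hence every nonzero subcomodule contains $\lambda$, and the subcomodule cogenerated by $\lambda$ is simple. Your unspecified filtration would have to reproduce exactly this conclusion; you have not said what the filtration is or why a subcomodule must meet its bottom layer nontrivially.

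For quasi-heredity (your third stage), the paper's mechanism is again structural rather than a straightening law. The key Ext input --- that $\Ext^1(L(\mu),\nabla(\lambda))\neq 0$ forces $\mu$ strictly above $\lambda$ --- comes from showing that $\nabla(\lambda)$ is a direct summand of $\induced_B^{{\GL_2}}(t)$ for $t={\tt wt}(\lambda)$, and that the latter is injective in the full subcategory of representations of weight $\le t$, via Frobenius reciprocity and a short analysis of $\Oscr_{\tt nc}(B)$-extensions. This establishes quasi-heredity of each finite-dimensional piece $\Oscr_n$ for an auxiliary coarser \emph{weight} ordering $\le_2$. A separate transfer step, using that $M(\lambda)$ carries both a $\nabla$- and a $\Delta$-filtration with labels $\le_1\lambda$, then upgrades the result to the finer ordering $\le_1$ (the $\le$ of the introduction), for which $\pi(\lambda)$ is finite so that the infinite-dimensional Definition~\ref{strong} applies. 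Your proposed straightening law would have to deliver the Ext condition directly for $\le_1$; that is precisely the step where a pure rewriting argument is hardest to make rigorous, and the paper avoids it entirely by the Borel-induction route. One small slip: in a costandard filtration of $I(\lambda)$ the piece $\nabla(\lambda)$ sits at the \emph{bottom} (it contains the socle), not the top, and the multiplicity of $\nabla(\mu)$ is $[\Delta(\mu):L(\lambda)]$ by BGG reciprocity, not $\dim\Delta(\lambda^\ast)_\mu$.
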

In Lemma \ref{basis} we give an explicit basis for $\Oscr_{\tt nc}({{\GL_2}})$ obtained via the Bergman diamond lemma.
The ``spanning set'' for $\nabla(\lambda)\subset \Oscr_{\tt nc}({{\GL_2}})$ we have given is actually part of the 
basis of $\Oscr_{\tt nc}({{\GL_2}})$. 
In the process of proving the quasi-hereditary  property  we have to verify
 that $\Oscr_{\tt nc}({{\GL_2}})$ has a $\nabla$-filtration. Roughly speaking we do
this by comparing the explicit bases for~$\nabla(\lambda)$ and  $\Oscr_{\tt nc}({{\GL_2}})$. This approach is different
from \cite{raedschelders-vandenbergh-2}. 

In the commutative case the costandard representations are sometimes called dual Weyl modules \cite{jantzen2}
and they are obtained by induction from one-dimensional representations of a Borel subgroup. It is natural to try
to imitate this construction in the non-commutative case. 

To do so we  define the following quotient Hopf algebras of $\Oscr_{\tt nc}({{\GL_2}})$:
\[
\begin{aligned}
\Oscr_{\tt nc}(B)&=\Oscr_{\tt nc}({{\GL_2}})/(b) \cong k \langle c,d^{\pm 1} \rangle [a^{\pm 1}], \\
\Oscr(T)&=\Oscr_{\tt nc}({{\GL_2}})/(b,c) \cong  k[a^{\pm 1},d^{\pm 1}]
\end{aligned}
\]
Here $\Oscr(T)$ is the actual commutative coordinate ring of a two-dimensional torus~$T$. We identify
its character group $X(T)$  with the Laurent monomials in $a,d$.
By sending $\delta\in \Lambda$ to $ad\in X(T)$ and $d\in \Lambda$ to $d\in X(T)$ we obtain a map
of monoids ${\tt wt}:\Lambda\rightarrow X(T)$.

\def\comod{\operatorname{CoMod}}
If $t\in X(T)$ then there is an associated one-dimensional $\Oscr(T)$-representation $k_t$
which may also be viewed as a $\Oscr_{\tt nc}(B)$-representation. Denote by $\Ind^{{\GL_2}}_B $ the
right adjoint to the restriction functor $\comod(\Oscr_{\tt nc}({{\GL_2}}))\rightarrow \comod(\Oscr_{\tt nc}(B))$
(see \S\ref{sec:induced}). Then we have the following result: 
\begin{theorem}[Theorem \ref{induced}] One has
\[
	\induced_B^{{\GL_2}}(k_t) = \bigoplus_{\substack{\lambda \in \Lambda \\ {\tt wt}(\lambda)=t}} \nabla(\lambda).
\]
\end{theorem}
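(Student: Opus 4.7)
The plan is to identify $\Ind_B^{\GL_2}(k_t)$ with the space of right $\Oscr_{\tt nc}(B)$-semi-invariants of weight $t$,
\[
\Ind_B^{\GL_2}(k_t) = \{x \in \Oscr_{\tt nc}({{\GL_2}}) : \rho_B(x) = x\otimes t\},
\]
where $\rho_B := (\id\otimes \pi)\Delta$ is obtained by composing the coproduct with the quotient map $\pi: \Oscr_{\tt nc}({{\GL_2}})\to \Oscr_{\tt nc}(B)$ on the second factor. From $\Delta(b) = a\otimes b + b\otimes d$, $\Delta(d) = c\otimes b + d\otimes d$, $\Delta(\delta^{\pm 1}) = \delta^{\pm 1}\otimes \delta^{\pm 1}$, together with $\pi(b)=0$, one reads off $\rho_B(b) = b\otimes d$, $\rho_B(d) = d\otimes d$ and $\rho_B(\delta^{\pm 1}) = \delta^{\pm 1}\otimes (ad)^{\pm 1}$.

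The inclusion $\bigoplus_{{\tt wt}(\lambda)=t}\nabla(\lambda)\subseteq \Ind_B^{\GL_2}(k_t)$ then follows from multiplicativity of $\rho_B$: applied to a spanning monomial $v = \delta^{x_1} b^{y'_1} d^{y''_1}\cdots \delta^{x_n} b^{y'_n} d^{y''_n}$ of $\nabla(\lambda)$ (with $y'_i+y''_i = y_i$), this yields $\rho_B(v) = v\otimes \prod_i (ad)^{x_i} d^{y_i} = v\otimes {\tt wt}(\lambda)$. By Lemma \ref{basis}, these spanning monomials across distinct $\lambda$ are distinct elements of the Bergman basis of $\Oscr_{\tt nc}({{\GL_2}})$, hence linearly independent, so the sum is direct.

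For the reverse inclusion I would use Frobenius reciprocity together with Theorem \ref{referentie}. The adjunction gives
\[
\Hom_{\Oscr_{\tt nc}({{\GL_2}})}(\Delta(\mu),\Ind_B^{\GL_2}(k_t)) = \Hom_{\Oscr_{\tt nc}(B)}(\Delta(\mu)|_B, k_t),
\]
and the crux is to verify that the right-hand side is one-dimensional when $t = {\tt wt}(\mu)$ and zero otherwise. Using $\Delta(\mu) = \nabla(\sigma(\mu))^*$, where $\sigma$ denotes the inverse of the duality $*$ on $\Lambda$, one dualises the right-$B$-isotypic structure of $\nabla(\sigma(\mu))$ via the antipode of $\Oscr_{\tt nc}({{\GL_2}})$ to produce a $B$-comodule filtration on $\Delta(\mu)$ with a unique one-dimensional top quotient of weight ${\tt wt}(\mu)$, generated by the functional dual to $\sigma(\mu)\in\nabla(\sigma(\mu))$. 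Combined with the $\nabla$-filtration on $\Ind_B^{\GL_2}(k_t)$ (automatic from the quasi-hereditary formalism), this determines the multiplicities $[\Ind_B^{\GL_2}(k_t):\nabla(\mu)] = \delta_{{\tt wt}(\mu),t}$, matching those of the explicit submodule from the previous step; equality follows. The main obstacle is this $\Hom_B$-calculation: $\Delta(\mu)$ is not itself $B$-isotypic, and pinning down its unique top quotient requires carefully tracking the antipode of $\Oscr_{\tt nc}({{\GL_2}})$ through the spanning-monomial description of $\nabla(\sigma(\mu))$.
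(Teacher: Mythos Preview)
Your forward inclusion is clean and correct: the spanning monomials of each $\nabla(\lambda)$ are visibly right $\Oscr_{\tt nc}(B)$-semi-invariants of weight ${\tt wt}(\lambda)$, and Lemma~\ref{basis} gives linear independence. This is if anything more transparent than the corresponding step in the paper.

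The reverse inclusion, however, is where your proposal diverges from the paper and where it remains incomplete. The paper does \emph{not} invoke the quasi-hereditary structure here. Instead it argues directly: via the bijection $\psi\circ S$ it converts right $\Oscr_{\tt nc}(B)$-semi-invariants of weight $t$ into left $\Oscr_{\tt nc}(B^+)$-semi-invariants of weight $t^*$, and then uses the explicit $\nabla_I$-filtration of $\Oscr_{\tt nc}({{\GL_2}})$ (Lemma~\ref{lemma:filtration}) together with the uniqueness of the $B^+$-semi-invariant in each $\nabla_I(\lambda)$ (Proposition~\ref{nab}) to pin down all semi-invariants as polynomials in $c,d,\delta^{\pm 1}$. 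An induction on the filtration degree lifts this from the associated graded to $\Oscr_{\tt nc}({{\GL_2}})$ itself. Translating back through $(\psi\circ S)^{-1}$ gives exactly the monomials in $b,d,\delta^{\pm 1}$ spanning the $\nabla(\lambda)$. This argument is elementary and, crucially, is established \emph{before} quasi-heredity in the paper's logical order.

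Your route through Frobenius reciprocity and the multiplicity formula $\Hom_B(\Delta(\mu)|_B,k_t)\cong k^{\delta_{{\tt wt}(\mu),t}}$ is a legitimate alternative strategy, but as you yourself flag, the $\Hom_B$-calculation is the whole difficulty and you have not actually carried it out. Your sketch speaks of ``the right-$B$-isotypic structure of $\nabla(\sigma(\mu))$'' and dualising via the antipode, but $\nabla(\sigma(\mu))$ is a \emph{left} comodule, and tracking what the antipode does to its $B$-restriction is precisely as hard as the original problem: indeed, via Frobenius reciprocity the identity $\dim\Hom_B(\Delta(\mu),k_t)=\delta_{{\tt wt}(\mu),t}$ is essentially equivalent to the theorem you are trying to prove. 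You would still need some concrete input---for instance a Lie--Kolchin type statement for $\Delta(\mu)|_B$ analogous to Proposition~\ref{nab}---and that is not supplied. There is also a logical wrinkle: you appeal to Theorem~\ref{referentie} (quasi-heredity, specifically $\Ext^1(\Delta,\nabla)=0$) to pass from matching $\Hom(\Delta(\mu),-)$ to equality of the comodules; in the paper this theorem is proved \emph{after} Theorem~\ref{induced}, so your argument would at minimum require reordering the development.
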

In particular we see that $\induced^{{\GL_2}}_B(k_t)=0$ if $t\not\in X(T)^+:=\im {\tt wt}$. This agrees with  the commutative case where only dominant weights yield non-zero representations under induction. But we also
see that in contrast to the commutative case here the induced representations are not indecomposable. However
they still yield all costandard comodules. 

In the commutative case the higher derived induction functors
$R^i\Ind^{G}_B$ are the subject of deep results such as Kempf's
vanishing theorem and more generally (in characteristic zero) Bott's
theorem. It would be interesting to know if such results also exist in
the non-commutative case.  We hope to come back to this in the
future.

From the fact that  $\Oscr_{\tt nc}({{\GL_2}})$ is quasi-hereditary it follows by general theory that the simple comodules are of the form $L(\lambda)=\im(\Delta(\lambda)\rightarrow \nabla(\lambda))$
which in principle reduces their study
to a linear algebra problem. 

 This  problem is usually difficult to solve,
but fortunately we succeed in the special case we are considering. 
The bialgebra
$\Oscr_{\tt nc}(M_2):=\uend(A)$ is the subalgebra\footnote{The fact that it is a subalgebra follow from Lemma \ref{basis} or else by \cite[Corollary 6.3.5]{raedschelders-vandenbergh-2}.} of $\Oscr_{\tt nc}({{\GL_2}})$ generated by $a,b,c,d$
and we have:
\begin{theorem}[Theorem \ref{th:simplelist}, Corollary \ref{cor:simple}] Assume that $k$ has characteristic zero. All simple
  $\Oscr_{\tt nc}(\GL_2)$-representations are repeated tensor products
  of simple 
 $\Oscr_{\tt nc}(M_2)$-representations and their duals.
\end{theorem}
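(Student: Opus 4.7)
The plan is to reduce the statement to the combinatorial observation that every $\lambda\in\Lambda$ admits a factorization of the form
\[
\lambda = \mu_1\nu_1^\ast\mu_2\nu_2^\ast \cdots \mu_k\nu_k^\ast
\]
with $\mu_i,\nu_i\in\Lambda^+:=\langle d,\delta\rangle$, obtained by grouping consecutive runs of positive and negative powers of $\delta$ in $\lambda$ and using $\delta^{-1}=\delta^\ast$, together with the tensor-product identity
\[
L(\mu_1)\otimes L(\nu_1)^\ast\otimes\cdots\otimes L(\mu_k)\otimes L(\nu_k)^\ast\;\cong\;L(\lambda).
\]
By the companion paper \cite{raedschelders-vandenbergh-2}, the sub-bialgebra $\Oscr_{\tt nc}(M_2)$ is itself quasi-hereditary with poset $\Lambda^+$, and its simple comodules are exactly the $L(\mu)$ with $\mu\in\Lambda^+$; these are the building blocks in the statement, and their duals make sense as $\Oscr_{\tt nc}(\GL_2)$-comodules (but not as $M_2$-comodules, since we need the antipode).

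I would then argue by induction on $k$, with the essential case being two factors: $L(\mu)\otimes L(\nu)^\ast\cong L(\mu\nu^\ast)$. To produce a map in one direction, combine the comultiplication of $\Oscr_{\tt nc}(\GL_2)$ with the inclusion $\nabla(\mu)\otimes\nabla(\nu^\ast)\hookrightarrow\Oscr_{\tt nc}(\GL_2)\otimes\Oscr_{\tt nc}(\GL_2)$; the resulting morphism $\nabla(\mu)\otimes\nabla(\nu^\ast)\to\nabla(\mu\nu^\ast)$ sends the distinguished vector $\mu\otimes\nu^\ast$ to $\mu\nu^\ast$. Restricting along $L(\mu)\otimes L(\nu)^\ast\subset \nabla(\mu)\otimes\nabla(\nu^\ast)$ and invoking the cogeneration property of $L(\mu\nu^\ast)$ inside $\nabla(\mu\nu^\ast)$ then produces a nonzero map $L(\mu)\otimes L(\nu)^\ast\to L(\mu\nu^\ast)$; since the target is simple the map is surjective.

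The main obstacle is the reverse direction, namely that $L(\mu)\otimes L(\nu)^\ast$ is itself simple. In characteristic zero I expect to attack this by a dimension/character comparison using the explicit basis from Lemma \ref{basis}. Concretely, for $\mu\in\Lambda^+$ the dimensions of $L(\mu)$ and of the standard and costandard comodules agree with the combinatorial dimensions provided by Koszul duality for $k[x,y]$ (which in characteristic zero recovers Schur-Weyl type data indexed by Young tableaux), so one has a product formula $\dim L(\mu)\cdot \dim L(\nu^\ast) = \dim L(\mu\nu^\ast)$ to verify. Combining this identity with the surjection above forces the surjection to be an isomorphism, and then iterating over the factorization of $\lambda$ settles the theorem.
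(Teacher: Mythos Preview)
Your proposal rests on the identity $L(\mu)\otimes L(\nu)^*\cong L(\mu\nu^*)$ for $\mu,\nu\in\Lambda^+$, and more generally on the claim that the tensor product of simples (and their duals) along your factorization of $\lambda$ remains simple. This is false. The simplest counterexample is $\mu=\nu=d$: then $L(\mu)\otimes L(\nu)^*=V\otimes V^*$, which contains the trivial representation as the image of the coevaluation map $k\to V\otimes V^*$, so it is not simple. A more substantial counterexample is the paper's own Example with $\lambda=d\delta^{-1}d\delta^{-1}d^3$: your factorization yields $V\otimes R^{-1}\otimes V\otimes R^{-1}\otimes S^3V$, of dimension $16$, whereas the paper computes $L(\lambda)=(T^3V)(S^2V)$, of dimension $4\cdot 3=12$. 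So not only does simplicity fail, but the proposed dimension formula $\dim L(\mu)\cdot\dim L(\nu^*)=\dim L(\mu\nu^*)$ is wrong, and your character comparison cannot rescue the argument.

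The paper's approach is quite different. It realizes $L(\lambda)$ as the image of the explicit map $\Delta(\lambda)\hookrightarrow M(\lambda)\twoheadrightarrow\nabla(\lambda)$, rewrites both sides as tensor products of symmetric powers $S^aV$ and their twisted duals $T^aV=(S^aV)^*R$, and then analyzes the resulting map as a $\GL_2$-morphism. The key technical step (Lemma~\ref{lemma1}) introduces an auxiliary $\mathfrak{sl}_2$-action via differential operators to show that each elementary map $(S^aV)(S^bV)\to(S^{a-1}V)(S^{b+1}V)$ is either injective or surjective, with the direction controlled by the inequality $a\gtrless b+1$. One then factors the full map as a chain of surjections followed by a chain of injections (the commutation Lemma~\ref{lemma2} guarantees this can always be arranged), and the image---hence $L(\lambda)$---is read off at the turning point. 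This produces the explicit form in Theorem~\ref{th:simplelist}, from which Corollary~\ref{cor:simple} follows. The characteristic-zero hypothesis enters precisely through the $\GL_2$/$\mathfrak{sl}_2$ representation theory used in Lemma~\ref{lemma1}.
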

The characteristic zero hypothesis is likely superfluous. It comes from the fact that we use some fragments of the representation theory of (commutative) $\GL_2$ in the proof.

The simple representations of $\Oscr_{\tt nc} (M_2)$  were
classified in \cite{kriegk-vandenbergh}. They are tensor products of $(S^n V)_{n\in
  \NN}$ and $\wedge^2 V$, where $V$ denotes the standard representation. Thus every simple $\Oscr_{\tt nc}(\GL_2)$-representation is a
tensor product of these basic representations and their duals. 
It is slightly subtle to characterize which among those tensor products are simple.
This is done in Theorem
\ref{th:simplelist}. Note that the problem of finding explicit models for the irreducible representations of universal quantum groups, in connection with Borel-Weil theory, was already raised in~\cite{wang}. 

For people interested in universal quantum groups we refer to \cite{bichon-dubois-violette,chirvasitu,walton-wang}
for some other recent papers on this subject. In particular  \cite{walton-wang} discusses certain quotients
of $\uaut(A)$ (which the authors denote by $\Oscr_A(\GL)$) when $A$ is a two-dimensional Artin-Schelter regular algebra.  The emphasis in loc.\ cit.\ is on the algebra 
properties of these quotients so the results are more or less orthogonal to the ones contained in
this paper. Note however that certain properties of bialgebras, like their Hilbert series, can be studied both on the algebra
and on the coalgebra side. 

Finally, note that in~\cite{bichon-riche} the authors also study representations of certain universal quantum groups by relying on a Borel-Weil type construction. It can however be checked that $\uaut(A)$ does not fit into their axiomatic framework since it does not have a ``dense big cell''. Our paper (see also~\cite{raedschelders-vandenbergh-2}) partially meets their lack of a ``non-commutative root system'' by providing natural orderings on the non-commutative weight monoid $\Lambda$, compatible with the one on $\GL_2$.

\section{Preliminaries}
\label{section:preliminaries}
Let~$k$ denote an algebraically closed field. 
All coalgebras $C$ are $k$-coalgebras and all unadorned
tensor products are over $k$.  By default a $C$-comodule $V$ is a left
comodule, i.e.\ with structure map $V \to C \ot V$. By a
$C$-representation we mean a finite dimensional $C$-comodule.  We
refer to Green~\cite{green} for fundamental facts and proofs on
coalgebra representation theory.

A beautiful survey on the use of quasi-hereditary (co)algebras in the
representation theory of algebraic groups is given by Donkin
in~\cite{donkin} and we will use the main definitions from that
article. One should also mention Jantzen's book on algebraic
groups~\cite{jantzen2} which contains all the essential results but
does not use the quasi-hereditary formalism. Finally for an algebraic
study of quasi-hereditary algebras we refer to the classic paper by
Dlab and Ringel~\cite{dlab-ringel-2}.  The reader should be warned that
the basic definitions in~\cite{dlab-ringel-2} are different from those of
\cite{donkin}. For a comparison see Appendix \ref{appA}.
\subsection{Finite dimensional quasi-hereditary coalgebras}
\label{section:finite-dimensional-quasihereditary}
In this section we follow~\cite{donkin}. 
Assume~$C$ is a finite dimensional coalgebra and let $\{L(\lambda) \
\vert \ \lambda \in \Lambda \}$ be a complete set of non-isomorphic
simple $C$-comodules for some partially ordered set $(\Lambda,\leq)$. By $I(\lambda)$ we denote the injective hull
of the simple comodule $L(\lambda)$. Let $V$ be a $C$-represenation.
For $\pi \subset \Lambda$ we say that $V$ belongs to $\pi$ if all composition factors
of $V$ are in the set $\{L(\lambda) \ \vert \ \lambda \in \pi \}$. In general we write
$O_\pi(V)$ for the comodule that is maximal amongst all subcomodules of $V$ belonging to
$\pi$.  For $\lambda\in \Lambda$ put $\pi(\lambda)=\{\mu \in \Lambda \ \vert \
\mu < \lambda\}$. Then $\nabla(\lambda) \supset L(\lambda)$ is the
subcomodule of $I(\lambda)$ defined by
\begin{equation}
\label{definition:donkinco}
\nabla(\lambda)/L(\lambda)=O_{\pi(\lambda)}(I(\lambda)/L(\lambda))
\end{equation}
The $\nabla(\ll)$ are called costandard comodules. 
Using the notation
$O^{\pi}(V)$ to denote the minimal subcomodule $U$ of $V$ such that
$V/U$ belongs to $\pi$, the standard comodules $\Delta(\lambda)$ are defined dually as
$$
\Delta(\ll)=P(\ll)/O^{\pi(\ll)}(N(\ll))
$$
where $P(\ll)$ is the projective cover of $L(\ll)$ and $N(\ll)$ denotes its maximal proper subcomodule.

From the definitions, one has more or less immediately the following proposition.	

	\begin{proposition}
	\label{simples}
	One has  
	$$
	\Hom^C(\Delta(\ll),\nabla(\mu))=
	\begin{cases}
		 k & \text{if } \lambda=\mu\\ 
		 0 & \text{otherwise}
	\end{cases}, 
	$$
	and all the simples can be recovered as $L(\ll)=\tn{Im}(\Delta(\ll) \to \nabla(\ll))$.
	\end{proposition}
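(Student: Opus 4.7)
The plan is to exploit the layered structure of $\Delta(\lambda)$ and $\nabla(\lambda)$. Both have $L(\lambda)$ concentrated at one end---as head of $\Delta(\lambda)$, since $\Delta(\lambda)$ is a quotient of the projective cover $P(\lambda)$, and as socle of $\nabla(\lambda)$, since $\nabla(\lambda)\subset I(\lambda)$---while their remaining composition factors all lie strictly below $\lambda$ in $\Lambda$. Indeed the radical of $\Delta(\lambda)$ is $N(\lambda)/O^{\pi(\lambda)}(N(\lambda))$, which belongs to $\pi(\lambda)$ by the very definition of $O^{\pi(\lambda)}$, while $\nabla(\lambda)/L(\lambda)$ belongs to $\pi(\lambda)$ by \eqref{definition:donkinco}.

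The vanishing statement for $\lambda\ne\mu$ will then follow from a head/socle sandwich argument. Given a nonzero $f\colon\Delta(\lambda)\to\nabla(\mu)$, its image $\image f$ is a nonzero quotient of $\Delta(\lambda)$, hence has head $L(\lambda)$, and simultaneously a nonzero subcomodule of $\nabla(\mu)$, hence has socle $L(\mu)$. Both $L(\lambda)$ and $L(\mu)$ therefore occur as composition factors of $\image f$. Since $L(\mu)$ is a composition factor of $\Delta(\lambda)$ one has $\mu\le\lambda$, and symmetrically $L(\lambda)\in\nabla(\mu)$ forces $\lambda\le\mu$; so $\lambda=\mu$.

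For $\lambda=\mu$ I would pass through the projective cover. The standard multiplicity identity
\[
\dim\Hom^C(P(\lambda),V)=[V:L(\lambda)]
\]
(valid because $k$ is algebraically closed, so $L(\lambda)$ is absolutely simple) applied to $V=\nabla(\lambda)$ gives $\dim\Hom^C(P(\lambda),\nabla(\lambda))=1$, because the only copy of $L(\lambda)$ among the composition factors of $\nabla(\lambda)$ is its socle. The composite $P(\lambda)\twoheadrightarrow L(\lambda)\hookrightarrow\nabla(\lambda)$ is one such nonzero map, hence spans this Hom space. Because it kills $N(\lambda)$, and therefore a fortiori $O^{\pi(\lambda)}(N(\lambda))$, it factors through $\Delta(\lambda)$, and so does every other map $P(\lambda)\to\nabla(\lambda)$. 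This yields $\Hom^C(\Delta(\lambda),\nabla(\lambda))=k$, and the unique nonzero map up to scalar is the composition $\Delta(\lambda)\twoheadrightarrow L(\lambda)\hookrightarrow\nabla(\lambda)$, whose image is exactly $L(\lambda)$.

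I do not foresee a genuine obstacle: this is a standard ``by the book'' manipulation of the quasi-hereditary definitions. The only mildly delicate point is the multiplicity identity $\dim\Hom^C(P(\lambda),V)=[V:L(\lambda)]$, which rests on $L(\lambda)$ being absolutely simple and is the single place where the algebraic closedness of $k$ is used.
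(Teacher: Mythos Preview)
Your argument is correct and is precisely the unpacking of the definitions that the paper has in mind when it says the proposition follows ``more or less immediately''. The head/socle sandwich for $\lambda\neq\mu$ and the projective-cover count $[\nabla(\lambda):L(\lambda)]=1$ for $\lambda=\mu$ are exactly the standard steps, and your identification of the unique nonzero map with $\Delta(\lambda)\twoheadrightarrow L(\lambda)\hookrightarrow\nabla(\lambda)$ gives the second assertion.
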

        To verify that a comodule is costandard, we will use the
        following coalgebraic version of Lemma~$1.1$
        in~\cite{dlab-ringel-2} (slightly adapted to be correct for the
        setting from \cite{donkin} we are following).

\begin{lemma}
\label{lemma:donkincostandard}
For any~$C$-comodule~$V$, and~$\lambda \in \Lambda$, the following are equivalent:
\begin{enumerate}
\item $V \cong \nabla(\lambda)$,
\item the following three conditions are satisfied:
\begin{enumerate}
\item $\socle(V) \cong L(\lambda)$,
\item if~$[V/\socle(V):L(\mu)] \neq 0$, then~$\mu < \lambda$,
\item if~$\mu < \lambda$, then $\Ext^1(L(\mu),V)=0$.
\end{enumerate}
\end{enumerate}
\end{lemma}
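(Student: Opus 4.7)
The plan is to prove the equivalence by establishing both implications in the standard way, using only the definition of $\nabla(\lambda)$ and injectivity of $I(\lambda)$.

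For the direction $(1)\Rightarrow(2)$: Starting from $V=\nabla(\lambda)$, conditions (a) and (b) are essentially immediate from the definition. Since $\nabla(\lambda)\subset I(\lambda)$ and the socle of the injective hull $I(\lambda)$ equals $L(\lambda)$, we get (a). Condition (b) follows because, by construction, $\nabla(\lambda)/L(\lambda)=O_{\pi(\lambda)}(I(\lambda)/L(\lambda))$ belongs to $\pi(\lambda)$. For (c), I would use injectivity of $I(\lambda)$: the short exact sequence $0\to\nabla(\lambda)\to I(\lambda)\to I(\lambda)/\nabla(\lambda)\to 0$ gives
\[
\Hom(L(\mu),I(\lambda))\to\Hom(L(\mu),I(\lambda)/\nabla(\lambda))\to \Ext^1(L(\mu),\nabla(\lambda))\to 0.
\]
For $\mu<\lambda$ the leftmost term vanishes (the socle of $I(\lambda)$ is $L(\lambda)$), so it suffices to check $\Hom(L(\mu),I(\lambda)/\nabla(\lambda))=0$. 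But any simple subcomodule of $I(\lambda)/\nabla(\lambda)$ comes from a simple subcomodule of $(I(\lambda)/L(\lambda))/(\nabla(\lambda)/L(\lambda))$, which by the maximality in the definition of $O_{\pi(\lambda)}$ cannot lie in $\pi(\lambda)$; in particular it cannot be $L(\mu)$.

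For the direction $(2)\Rightarrow(1)$: condition (a) says $V$ is an essential extension of $L(\lambda)$, hence embeds into $I(\lambda)$; fix such an embedding so that the copy of $L(\lambda)$ inside $V$ agrees with the socle of $I(\lambda)$. Condition (b) then forces $V/L(\lambda)\subset I(\lambda)/L(\lambda)$ to belong to $\pi(\lambda)$, and the maximality in the definition of $\nabla(\lambda)$ yields $V\subset\nabla(\lambda)$. It remains to rule out the possibility that $V\subsetneq\nabla(\lambda)$. If it were proper, pick a simple subcomodule $L(\mu)\subset\nabla(\lambda)/V$; by (b) applied to $\nabla(\lambda)$ (already established in the first direction) we have $\mu<\lambda$. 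Pulling back gives a short exact sequence
\[
0\to V\to W\to L(\mu)\to 0,\qquad W\subset\nabla(\lambda).
\]
By hypothesis (c) this sequence splits, so $L(\mu)$ embeds into $W\subset\nabla(\lambda)$. This contradicts $\socle(\nabla(\lambda))=L(\lambda)$ (established above), since $\mu<\lambda$ forces $L(\mu)\not\cong L(\lambda)$. Hence $V=\nabla(\lambda)$.

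I expect no genuine obstacle in this argument; the only point requiring care is the bookkeeping around the maximality property defining $\nabla(\lambda)$ in \eqref{definition:donkinco} and making sure that the essential-extension step in $(2)\Rightarrow(1)$ really lets us embed $V$ into $I(\lambda)$ with the socles matching, which is guaranteed by the injective envelope property together with (a).
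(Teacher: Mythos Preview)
Your proof is correct. The paper does not actually prove this lemma; it merely states it as ``the coalgebraic version of Lemma~1.1 in~\cite{dlab-ringel-2} (slightly adapted)'' and refers the reader there, so your direct argument supplies what the paper omits. One cosmetic remark: in the $(1)\Rightarrow(2c)$ step, the vanishing of the leftmost term $\Hom(L(\mu),I(\lambda))$ is not what makes ``it suffices to check $\Hom(L(\mu),I(\lambda)/\nabla(\lambda))=0$''---that reduction comes from the surjectivity at the right end (i.e.\ $\Ext^1(L(\mu),I(\lambda))=0$); the leftmost vanishing is true but irrelevant to the conclusion.
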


Let~$G_0(C)$ denote the Grothendieck group of the category of finite dimensional~$C$-comodules.
\begin{lemma}
\label{grothendieck}
The (co)standard comodules form a $\mathbb{Z}$-basis of $G_0(C)$.
\end{lemma}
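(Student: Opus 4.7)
The plan is to exhibit a unitriangular change-of-basis matrix between the classes $[\nabla(\lambda)]$ and the classes $[L(\lambda)]$ in $G_0(C)$, and then apply the dual argument for the $\Delta(\lambda)$'s. Since $C$ is finite dimensional, the indexing set $\Lambda$ is finite and the classes $\{[L(\lambda)]\}_{\lambda\in\Lambda}$ are already known to form a $\mathbb{Z}$-basis of $G_0(C)$, so we only need to check that the transition matrix is invertible over $\mathbb{Z}$.

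First I would record the composition factor content of $\nabla(\lambda)$. By Lemma~\ref{lemma:donkincostandard}(2a), $\socle(\nabla(\lambda))\cong L(\lambda)$, so $L(\lambda)$ occurs as a composition factor of $\nabla(\lambda)$. By Lemma~\ref{lemma:donkincostandard}(2b), every composition factor of the quotient $\nabla(\lambda)/\socle(\nabla(\lambda))$ is of the form $L(\mu)$ with $\mu<\lambda$; in particular $L(\lambda)$ does not appear again. Therefore
\[
[\nabla(\lambda)] = [L(\lambda)] + \sum_{\mu<\lambda} n_{\lambda\mu}\,[L(\mu)]
\]
for non-negative integers $n_{\lambda\mu}$. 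Fix any linear extension of $(\Lambda,\le)$ and order the basis $\{[L(\lambda)]\}$ accordingly; the matrix expressing the $[\nabla(\lambda)]$ in terms of the $[L(\mu)]$ is then upper unitriangular, hence invertible over $\mathbb{Z}$. It follows that the $[\nabla(\lambda)]$ form a $\mathbb{Z}$-basis of $G_0(C)$.

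For the standard comodules I would run the dual argument. By the dual of Lemma~\ref{lemma:donkincostandard} (which follows from the definition $\Delta(\lambda)=P(\lambda)/O^{\pi(\lambda)}(N(\lambda))$), the top of $\Delta(\lambda)$ is $L(\lambda)$ and every composition factor of the radical $\Delta(\lambda)/\topp(\Delta(\lambda))$ is an $L(\mu)$ with $\mu<\lambda$. Hence $[\Delta(\lambda)]=[L(\lambda)]+\sum_{\mu<\lambda}m_{\lambda\mu}[L(\mu)]$ for non-negative integers $m_{\lambda\mu}$, and the same unitriangularity argument shows $\{[\Delta(\lambda)]\}_{\lambda\in\Lambda}$ is a $\mathbb{Z}$-basis.

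There is no serious obstacle here; the only point requiring care is the observation that $L(\lambda)$ appears with multiplicity exactly one in $\nabla(\lambda)$ (and in $\Delta(\lambda)$), which is guaranteed by the socle/top identification together with the strict inequality $\mu<\lambda$ in the composition factor bound. The finiteness of $\Lambda$ ensures the change-of-basis matrix is a genuine finite unitriangular matrix; in the infinite-dimensional setting of \S\ref{sec:infqh} one would have to reinterpret this as a statement about a suitable completion or about the integral bilinear form on $G_0$, but for finite-dimensional $C$ no such care is needed.
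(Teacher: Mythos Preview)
Your proof is correct and follows essentially the same approach as the paper: both argue that $[L(\lambda):\nabla(\lambda)]=1$ while all other composition factors have strictly smaller index, so the transition matrix from the $[L(\lambda)]$ to the $[\nabla(\lambda)]$ is unitriangular (and dually for the $\Delta(\lambda)$). The only cosmetic difference is that the paper reads these facts directly off the defining equation~\eqref{definition:donkinco}, whereas you route them through Lemma~\ref{lemma:donkincostandard}.
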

\begin{proof}
The simple $L(\lambda)$ occurs with multiplicity $1$ in $\nabla(\ll)$, and by definition all other composition factors of $\nabla(\ll)$ are of strictly smaller weight so the costandard comodules are related to the basis of simple comodules by a unitriangular matrix. The proof for the standard comodules is similar.
\end{proof}
By $\Fscr(\Delta)$, $\Fscr(\nabla)$ one denotes the categories of representations admitting filtrations whose
factors are respectively standard and costandard modules. We will call such filtrations (co)standard filtrations (they are required to exist but are not part of the structure of an object in $\Fscr(\Delta)$, $\Fscr(\nabla)$).

Note that Lemma~\ref{grothendieck} ensures that the multiplicity $[V:\nabla(\ll)]$ of $\nabla(\ll)$ as subquotient in a costandard filtration on $V$ is independent of the filtration.

	\begin{definition} \cite{donkin}
	\label{qh}
	The (finite dimensional) coalgebra $C$ is quasi-hereditary if 
	\begin{enumerate}
		\item $I(\lambda) \in \Fscr(\nabla)$,
		\item $(I(\ll):\nabla(\ll))=1$,
		\item If $(I(\ll):\nabla(\mu)) \neq 0$, then $\mu \geq \ll$.
	\end{enumerate}
	\end{definition}

In the following we will use another characterization of quasi-hereditary coalgebras. It is often more convenient since when combined with Lemma \ref{lemma:donkincostandard}(2) 
it does not explicitly refer to the injectives $I(\lambda)$. 
\begin{proposition}
\label{prop:convenient} 
The coalgebra $C$ is quasi-hereditary if and only if the following conditions hold.
\begin{enumerate}
\item $C\in \Fscr(\nabla)$.
\item If $\Ext^1(L(\mu),\nabla(\lambda))\neq 0$ then $\mu >\lambda$. 
\end{enumerate}
\end{proposition}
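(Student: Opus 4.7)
The plan is to verify both directions by direct comparison with Definition~\ref{qh}, using standard Ext-and-socle arguments.

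For the forward direction, assume Donkin's definition holds. Condition (1) of the proposition follows from the decomposition $C\cong\bigoplus_\lambda I(\lambda)^{\oplus\dim L(\lambda)}$ combined with Donkin's clause (1). For condition (2), I first observe that by Donkin's clauses (2) and (3), together with the socle constraint $\socle(I(\lambda))=L(\lambda)$, any $\nabla$-filtration of $I(\lambda)$ can be arranged so that $\nabla(\lambda)$ sits at the bottom, with all higher factors $\nabla(\mu_i)$ satisfying $\mu_i>\lambda$. Applying $\Hom(L(\mu),-)$ to the short exact sequence $0\to\nabla(\lambda)\to I(\lambda)\to I(\lambda)/\nabla(\lambda)\to 0$ and using injectivity of $I(\lambda)$ presents $\Ext^1(L(\mu),\nabla(\lambda))$ as a quotient of $\Hom(L(\mu),I(\lambda)/\nabla(\lambda))$. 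A socle analysis of the $\nabla$-filtered quotient $I(\lambda)/\nabla(\lambda)$ then shows that any simple submodule has the form $L(\mu)$ with $\mu>\lambda$, which yields (2).

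For the backward direction, assume (1) and (2). The key preliminary step is to establish that $\Fscr(\nabla)$ is closed under direct summands. To this end, I first derive from (2) the Ext vanishing $\Ext^1(\nabla(\alpha),\nabla(\beta))=0$ whenever $\alpha\le\beta$: the long exact sequence from $0\to L(\alpha)\to\nabla(\alpha)\to\nabla(\alpha)/L(\alpha)\to 0$ reduces this to (2) applied to $L(\alpha)$ and to each composition factor $L(\nu)$ of $\nabla(\alpha)/L(\alpha)$, all of which satisfy $\nu<\alpha\le\beta$ and hence $\nu\not>\beta$. This vanishing allows one to reorder any $\nabla$-filtration so that smaller indices lie below larger ones, and to split extensions by $\nabla(\alpha)$ of modules already filtered by $\nabla(\alpha)$'s. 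Applying this to $C\in\Fscr(\nabla)$ and its direct summand $I(\lambda)$ yields Donkin's clause (1). Reordering the $\nabla$-filtration of $I(\lambda)$ so that the minimal index $\mu_{\min}$ sits at the bottom with multiplicity $n$, one obtains an injection $\nabla(\mu_{\min})^{\oplus n}\hookrightarrow I(\lambda)$, and the socle constraint $\socle(I(\lambda))=L(\lambda)$ forces $\mu_{\min}=\lambda$ and $n=1$. The remaining factors are then indexed by $\mu_i>\lambda$, giving Donkin's clauses (2) and (3).

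The main obstacle is the closure lemma for $\Fscr(\nabla)$ together with the Ext vanishing that underlies the reordering argument. Both are standard in the theory of quasi-hereditary algebras (see e.g.\ \cite{dlab-ringel-2,donkin}) and the translation from modules to comodules requires no essentially new ideas.
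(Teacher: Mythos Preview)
Your argument is correct. The forward direction coincides with the paper's: the socle/Ext analysis you describe is exactly the content of Lemma~\ref{lemma:adapted} in Appendix~\ref{appA}, which the paper simply invokes.

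For the converse you take a genuinely different route. The paper does not verify Donkin's Definition~\ref{qh} directly. Instead it passes through the Dlab--Ringel framework: it first uses condition~(2) to show $\nabla_{\tt D}(\lambda)=\nabla_{\tt DR}(\lambda)$, then checks the Dlab--Ringel axioms (Schurian costandards via Lemma~\ref{lemma:schurian}, adapted poset via Lemma~\ref{lem:adapted}, and ${}^C C\in\Fscr(\nabla_{\tt DR})$ from condition~(1)), and finally invokes the equivalence Theorem~\ref{theorem:equivalence}. In that route closure of $\Fscr(\nabla)$ under summands is never isolated as a step; it is hidden inside Dlab--Ringel's characterisation $\Fscr(\nabla_{\tt DR})=\{V:\Ext^1(\Delta_{\tt DR},V)=0\}$, from which $I(\lambda)\in\Fscr(\nabla)$ is immediate. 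Your approach is more direct and avoids introducing a second notion of (co)standard comodule, at the price of citing the summand-closure lemma as a black box; the paper's approach is more roundabout but leverages machinery already built for other purposes in the appendix. Both ultimately rest on the same Ext-vanishing and filtration-reordering mechanism.
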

For a proof see Appendix \ref{appA}.

For use below put
$C(\pi)=O_\pi(C)$. From the maximality it follows that $C(\pi)$ is a
subcoalgebra of $C$ and that $\{L(\lambda) \ \vert \ \lambda \in \pi
\}$ is a complete set of non-isomorphic simple
$C(\pi)$-comodules. For $\lambda\in\pi$ we write $\Delta_\pi(\lambda)$, $\nabla_\pi(\lambda)$
for the corresponding $C(\pi)$-(co)standard comodules. 
A subset $\pi\subset \Lambda$ is
said to be saturated 
if $\mu \leq \ll \in \pi$ implies $\mu \in \pi$.
Recall the following 
\begin{theorem}  \label{th:truncation} \cite[Prop. A.3.4]{donkin2}
Assume that $C$ is quasi-hereditary. For a saturated subset $\pi\subset \Lambda$ we have
that $C(\pi)$ is quasi-hereditary with simple, standard and costandard modules respectively
given by $L(\lambda)$, $\Delta_\pi(\lambda)=\Delta(\lambda)$, $\nabla_\pi(\lambda)=\nabla(\lambda)$ for $\lambda\in \pi$.
\end{theorem}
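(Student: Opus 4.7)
The plan is to verify both conditions of Proposition~\ref{prop:convenient} for the subcoalgebra $C(\pi)$. The key structural observation is that, since $\pi$ is saturated, the composition factors of both $\nabla(\lambda)$ and $\Delta(\lambda)$ for $\lambda\in\pi$ are $L(\mu)$ with $\mu\le\lambda$, hence in $\pi$. Thus these are already $C(\pi)$-comodules, and more generally $C(\pi)\text{-comod}\subset C\text{-comod}$ is a Serre subcategory --- closure under extensions holds because the composition factors of an extension are precisely those of its sub and quotient combined. A direct consequence is that $\Ext^1_{C(\pi)}(-,-)$ coincides with $\Ext^1_C(-,-)$ on objects of $C(\pi)\text{-comod}$.

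Using this, I would first identify the (co)standards. Viewing $\nabla(\lambda)$ for $\lambda\in\pi$ as a $C(\pi)$-comodule, I apply Lemma~\ref{lemma:donkincostandard}: conditions (a) and (b) are inherited from the ambient $C$-structure, and condition (c) --- the vanishing of $\Ext^1_{C(\pi)}(L(\mu),\nabla(\lambda))$ for $\mu<\lambda$ in $\pi$ --- follows via the Ext-agreement from the quasi-heredity of $C$. Hence $\nabla_\pi(\lambda)=\nabla(\lambda)$, and the dual argument yields $\Delta_\pi(\lambda)=\Delta(\lambda)$. Condition~(2) of Proposition~\ref{prop:convenient} for $C(\pi)$ is then immediate from the Ext-agreement combined with the corresponding condition for $C$.

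The main obstacle is condition~(1): showing $C(\pi)\in\Fscr(\nabla_\pi)$. My starting point is a $\nabla$-filtration $0=C_0\subset C_1\subset\cdots\subset C_n=C$ (guaranteed by quasi-heredity of $C$), and the strategy is to produce a $\nabla_\pi$-filtration of $C(\pi)$ by intersection. Setting $C'_i:=C(\pi)\cap C_i$, each quotient $C'_i/C'_{i-1}$ embeds into $C_i/C_{i-1}=\nabla(\lambda_i)$; for $\lambda_i\notin\pi$ the embedding is zero (any non-zero submodule of $\nabla(\lambda_i)$ contains its simple socle $L(\lambda_i)\notin\pi$), so such filtration steps contribute nothing to $C(\pi)$.

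The genuinely delicate point is surjectivity of $C'_i\to\nabla(\lambda_i)$ when $\lambda_i\in\pi$, and this is the main technical hurdle. I would handle it by induction along a chain $\emptyset=\pi_0\subsetneq\pi_1\subsetneq\cdots\subsetneq\pi_r=\pi$ of saturated subsets with $\pi_j\setminus\pi_{j-1}=\{\lambda_{0,j}\}$ and $\lambda_{0,j}$ maximal in $\pi_j$; the inductive step reduces to showing that $C(\pi_j)/C(\pi_{j-1})$ is a direct sum of copies of $\nabla(\lambda_{0,j})$. Here a socle-lifting argument --- any simple $L(\nu)$ with $\nu\in\pi_{j-1}$ appearing in $\socle(C(\pi_j)/C(\pi_{j-1}))$ could be lifted to extend $C(\pi_{j-1})$ to a strictly larger $\pi_{j-1}$-subcomodule of $C(\pi_j)$, contradicting maximality --- forces every socle constituent of $C(\pi_j)/C(\pi_{j-1})$ to be $L(\lambda_{0,j})$; combined with maximality of $\lambda_{0,j}$ in $\pi_j$ and the quasi-heredity of $C$, one concludes that this single-step quotient is a sum of $\nabla(\lambda_{0,j})$'s. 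Concatenating the inductively constructed filtrations yields $C(\pi)\in\Fscr(\nabla_\pi)$, completing the verification.
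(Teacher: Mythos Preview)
The paper does not prove this statement; it is simply quoted from \cite[Prop.~A.3.4]{donkin2}. So there is no ``paper's own proof'' to compare against, and your task is really to give a self-contained argument.

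Your identification of the $C(\pi)$-(co)standards and your verification of condition~(2) in Proposition~\ref{prop:convenient} are fine: the Serre-subcategory observation gives $\Ext^1_{C(\pi)}=\Ext^1_C$ on $C(\pi)$-comodules, and Lemma~\ref{lemma:donkincostandard} then does the work.

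The gap is in your verification of condition~(1). Your socle argument correctly shows that $\socle\bigl(C(\pi_j)/C(\pi_{j-1})\bigr)\cong L(\lambda_{0,j})^{\oplus m}$, and from this (together with $\Ext^1_C(L(\mu),\nabla(\lambda_{0,j}))=0$ for $\mu\in\pi_j$) one gets an embedding $C(\pi_j)/C(\pi_{j-1})\hookrightarrow\nabla(\lambda_{0,j})^{\oplus m}$. But you do not explain why this embedding is \emph{surjective}; the phrase ``combined with maximality of $\lambda_{0,j}$ in $\pi_j$ and the quasi-heredity of $C$'' is not an argument. Maximality of $C(\pi_j)$ is a statement about subcomodules of $C$, whereas the $\nabla(\lambda_{0,j})^{\oplus m}$ in which you have embedded the quotient lives only in the abstract injective hull, so there is no evident way to pull a hypothetical proper cokernel back into $C$ to contradict maximality.

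A cleaner route avoids the inductive chain entirely. Since $C$ is quasi-hereditary one has $\Ext^1_C(\nabla(\lambda),\nabla(\mu))=0$ whenever $\lambda\not>\mu$ (this is Lemma~\ref{lemma:exts}, valid by Theorem~\ref{theorem:equivalence}). If $\lambda\in\pi$ and $\mu\notin\pi$ then $\lambda>\mu$ would force $\mu\in\pi$ by saturation, so indeed $\lambda\not>\mu$ and the Ext vanishes. Hence any $\nabla$-filtration $0=C_0\subset\cdots\subset C_n=C$ can be reordered so that all subquotients $\nabla(\lambda_i)$ with $\lambda_i\in\pi$ occur first, say for $i\le s$. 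Then $C_s$ has all composition factors in $\pi$, while every simple subcomodule of $C/C_s$ lies in some $\nabla(\lambda_i)$ with $i>s$ and hence has label outside $\pi$; thus $O_\pi(C/C_s)=0$ and $C(\pi)=C_s\in\Fscr(\nabla_\pi)$. This replaces your unfinished step and completes the proof.
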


\subsection{Infinite dimensional quasi-hereditary coalgebras}
\label{sec:infqh}
Since coordinate rings of algebraic groups and their quantum versions
are infinite dimensional, Definition~\ref{qh} needs to be generalized. In
this section,~$C$ is no longer assumed to be finite dimensional. In
agreement with the notation of
Section~\ref{section:finite-dimensional-quasihereditary}, let
$\{L(\ll) \ \vert \ \ll \in \Lambda\}$ denote a complete set of
non-isomorphic simple comodules of~$C$, indexed by some (possibly
infinite) poset $(\Lambda,\leq)$. By $I(\ll)$ we still denote the
injective hull of the simple comodule $L(\ll)$. Note that projective
covers in general no longer exist and hence the situation is no longer
self dual.
The following infinite dimensional version of the quasi-hereditary
property is due to Donkin~\cite{donkin}. 
	\begin{definition}
	\label{strong}
	The coalgebra $C$ is quasi-hereditary if
\begin{enumerate}
\item for every $\lambda\in \Lambda$ the set $\pi(\lambda)$ is finite;
\item for every finite, saturated $\pi \subset \Lambda$, the coalgebra
  $C(\pi)$ (see
  Section~\ref{section:finite-dimensional-quasihereditary}, the definition makes sense in the current setting) is finite
  dimensional and quasi-hereditary in the sense of
  Definition~\ref{qh}.
\end{enumerate}
	\end{definition}
Assume that $C$ is quasi-hereditary.
For $\lambda\in \Lambda$ and $\pi$ a saturated subset in $\Lambda$ containing $\lambda$ (e.g.\ $\pi(\lambda)$)
we put
\begin{align*}
\Delta(\lambda)&=\Delta_{\pi}(\lambda)\\
\nabla(\lambda)&=\nabla_{\pi}(\lambda)\,.
\end{align*}
Theorem \ref{th:truncation} shows that this definition is independent of $\pi$.
\begin{remark}
It is not hard to see that $\nabla(\ll)$ is isomorphic to the subcomodule $\nabla'(\ll)$ of $I(\lambda)$ containing $L(\lambda)$ defined by 
$$
\nabla'(\ll)/L(\ll)=O_{\pi(\lambda)}(I(\lambda)/L(\ll)),
$$
exactly like in the finite dimensional setting.  Due to the lack of projective covers in the infinite dimensional case, there is no analogous construction
for standard comodules.
\end{remark}

By $\Fscr(\nabla)$ (respectively $\Fscr(\Delta)$), we again denote the
category of representations of~$C$ having a (finite) filtration by costandard 
(respectively standard) comodules.

The following
theorem by Donkin \cite[Thm 2.5]{donkin}
shows that the homological algebra of quasi-hereditary
coalgebras is completely determined by that of their finite dimensional
quasi-hereditary subcoalgebras. 
	\begin{theorem}
	If $C$ is quasi-hereditary, then for a finite, saturated $\pi \subset \Lambda$, and $C(\pi)$-comodules $V$ and $W$, one has for all $i \geq 0$,
	\[
	\tn{Ext}_{C(\pi)}^i(V,W) \cong \tn{Ext}_C^i(V,W).
	\]
	\end{theorem}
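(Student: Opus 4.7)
The plan is to reduce to the finite dimensional setting and then to an inductive argument on the size of $\pi'\setminus \pi$ where $\pi'\supseteq \pi$ is finite and saturated.

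First I would note that $C$ is the filtered union of the subcoalgebras $C(\pi')$ for finite saturated $\pi'\supseteq \pi$. Any finite dimensional subcomodule of any $C$-comodule is then contained in some $C(\pi')$, and since Yoneda $\Ext^i_C(V,W)$ between finite dimensional $V,W$ is computed by $i$-fold extensions in $\Comod(C)$ whose middle terms can be taken finite dimensional, every class in $\Ext^i_C(V,W)$ is represented by an extension living in $\Comod(C(\pi'))$ for some finite saturated $\pi'\supseteq \pi$. Thus it suffices to prove that the natural map
\[
\Ext^i_{C(\pi)}(V,W)\longrightarrow \Ext^i_{C(\pi')}(V,W)
\]
is an isomorphism for all finite dimensional $V,W\in \Comod(C(\pi))$ and all finite saturated $\pi\subseteq \pi'$. (For infinite dimensional $V$ or $W$, standard limit arguments on both sides — using that $V$ is a filtered colimit of its finite dimensional subcomodules — would finish the job.)

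For the reduction to the key case, I would induct on $|\pi'\setminus \pi|$. Since $\pi$ is saturated and $\pi\subsetneq \pi'$, there exists a maximal element $\lambda_0$ of $\pi'$ lying in $\pi'\setminus \pi$. Then $\pi_1:=\pi'\setminus\{\lambda_0\}$ is saturated and contains $\pi$, so by induction I may assume $\pi'=\pi\cup\{\lambda_0\}$ with $\lambda_0$ maximal and $\lambda_0\notin \pi$.

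For this base step, the key observation is that $\Comod(C(\pi))$ is a thick subcategory of $\Comod(C(\pi'))$: it is closed under subobjects, quotients, and extensions, because the composition factors of any such are forced to lie in $\pi$ by the saturatedness of $\pi$. Now I would use the right adjoint $O_\pi\colon \Comod(C(\pi'))\to \Comod(C(\pi))$ to the inclusion, which is left exact and sends $C(\pi')$-injectives to $C(\pi)$-injectives (by adjunction, since the inclusion is exact). Thanks to Theorem~\ref{th:truncation}, we have $O_\pi I_{C(\pi')}(\mu)=I_{C(\pi)}(\mu)$ for every $\mu\in \pi$. Using the quasi-hereditary structure, the injective hull $I_{C(\pi')}(\mu)$ admits a $\nabla_{\pi'}$-filtration; the factors $\nabla_{\pi'}(\nu)$ for $\nu\in\pi$ coincide with $\nabla_{\pi}(\nu)\in \Comod(C(\pi))$, while $\nabla_{\pi'}(\lambda_0)$ has socle $L(\lambda_0)\notin\pi$ and is killed by $O_\pi$. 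A short induction on the length of this filtration shows that the cokernel $I_{C(\pi')}(\mu)/I_{C(\pi)}(\mu)$ has composition factors only of type $L(\lambda_0)$, from which one deduces that the higher derived functors $R^qO_\pi$ vanish on $\Comod(C(\pi))$. Equivalently, applying $O_\pi$ termwise to any $C(\pi')$-injective resolution of $W\in\Comod(C(\pi))$ yields a $C(\pi)$-injective resolution of $W$. Computing $\Ext^i_{C(\pi')}(V,-)$ on such a resolution then gives an isomorphism with $\Ext^i_{C(\pi)}(V,W)$.

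The main obstacle is the vanishing of $R^qO_\pi$ on $\Comod(C(\pi))$: this is where one genuinely uses the quasi-hereditary hypothesis, via the $\nabla$-filtration of the injective hulls together with the fact that the added weight $\lambda_0$ is maximal and lies outside $\pi$. Everything else is bookkeeping with adjunctions, Yoneda $\Ext$, and filtered colimits.
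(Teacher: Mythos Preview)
The paper does not prove this theorem; it merely quotes it as \cite[Thm 2.5]{donkin}. So there is no ``paper's own proof'' to compare against, and your attempt must be evaluated on its own.

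Your overall architecture --- pass to the filtered system of finite saturated $\pi'\supseteq\pi$, then induct on $|\pi'\setminus\pi|$, reducing to $\pi'=\pi\cup\{\lambda_0\}$ with $\lambda_0$ maximal, and finally use the adjunction between inclusion and $O_\pi$ --- is exactly the standard approach and is sound.

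There is, however, a genuine gap in the key step. Your assertion that ``the cokernel $I_{C(\pi')}(\mu)/I_{C(\pi)}(\mu)$ has composition factors only of type $L(\lambda_0)$'' is false: the $\nabla$-filtration factors $\nabla(\lambda_0)$ have socle $L(\lambda_0)$, but their remaining composition factors are $L(\gamma)$ for $\gamma<\lambda_0$, and these typically lie in $\pi$. (Already for the path algebra of $1\to 2$ with $1<2$ one sees this.) Even granting your claim, the inference ``from which one deduces that $R^qO_\pi$ vanishes on $\Comod(C(\pi))$'' is not justified: knowing the composition factors of $I/O_\pi I$ does not by itself control $R^qO_\pi$ on the syzygies of $W$.

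What actually makes the step work is the defining feature of a heredity ideal. Dually to the usual statement that the heredity ideal $J=A'eA'$ is projective with $eA'e$ semisimple, one has that the quotient comodule $C(\pi')/C(\pi)$ is injective as a $C(\pi')$-comodule and is annihilated by $O_\pi$ (its socle lies entirely over $\lambda_0$). In module language: $\Tor^{A'}_q(A'/J,M)=0$ for every $A'/J$-module $M$ and every $q>0$, because $J$ is projective and $J\otimes_{A'}M\cong A'e\otimes_{eA'e}eM=0$. Hence tensoring an $A'$-projective resolution of $M$ with $A'/J$ yields an $A'/J$-projective resolution, and adjunction gives $\Ext^i_{A}(M,N)\cong\Ext^i_{A'}(M,N)$. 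Translating back to comodules, this is precisely the vanishing $R^qO_\pi\big|_{\Comod(C(\pi))}=0$ you need. Once you replace your cokernel argument with this, the rest of your proof goes through.
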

\section{Universal coacting bialgebras and Hopf algebras}
\label{section:universal}
Throughout $A=k\oplus A_1\oplus A_2\oplus$ is an $\NN$-graded algebra such that $\dim A_i<\infty$ for
all $i$.
We first introduce the universal coacting
bialgebra $\uend(A)$ which is defined using a suitable universal property. Every
bialgebra has a universal associated Hopf algebra, which in the case
of $\uend(A)$ will be denoted $\uaut(A)$. This Hopf algebra also
satisfies a universal property and is in fact the universal coacting
Hopf algebra of $A$. Finally, we describe by generators and relations
the specific bialgebra and Hopf algebra we are interested in, namely
$\uend(k[x,y])$ and $\uaut(k[x,y])$.

\subsection{Universal constructions}

	\begin{definition}
	\label{universal}
	The universal coacting algebra of $A$, denoted $\uend(A)$, is an algebra equipped with an 	
	algebra morphism $\delta_A:A \to \uend(A) \ot A$, satisfying the following universal property: 
	for any $k$-algebra $B$ and algebra morphism $f: A \to B \ot A$, such that 
	$\delta(A_n) \subset B \ot A_n$, there exists a unique morphism 
	$g:\uend(A) \to B$ such that the diagram
	$$
	\begin{tikzcd}
		A \drar[swap]{f} \rar{\delta_A} & \uend(A) \ot A \dar[dashed]{g \ot 1}\\
		& B \ot A 
	\end{tikzcd}
	$$
	commutes.
	\end{definition}

The existence of this algebra is essentially due to Manin~\cite{manin}. These algebras have some nice properties, the proofs of which can be found in Proposition 1.3.8 of~\cite{pareigis-2}.
\begin{definition} Let $B$ be a bialgebra. A $B$-comodule algebra is an algebra $A$ equipped with
an algebra morphism $f:A\rightarrow B\otimes A$ which makes $A$ into a comodule over $B$.
\end{definition}
	\begin{proposition}
	\label{comodulealgebra}
\begin{enumerate}
\item
	The universal coacting algebra of $A$ is in fact a bialgebra, $A$ is an $\uend(A)$-comodule 
	algebra via $\delta_A$. 
\item $\uend(A)$ also satisfies a different universal property:
if $B$ is any bialgebra, and $f:A \to B \otimes A$ equips $A$ with the structure
	of a $B$-comodule algebra such that $f(A_n)\subset B\otimes A_n$, then there is a unique morphism of bialgebras $g:\uend(A) \to B$ such 
	that the diagram
	$$
	\begin{tikzcd}
		A \drar[swap]{f} \rar{\delta_A} & \uend(A) \otimes A \dar[dashed]{g \otimes 1} \\
		& B \otimes A
	\end{tikzcd}
	$$
	commutes.
\end{enumerate}
	\end{proposition}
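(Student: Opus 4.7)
The plan is to construct the bialgebra structure on $\uend(A)$ purely via the universal property of Definition \ref{universal}, and then to deduce all compatibilities from its uniqueness clause. To define the comultiplication, I would apply the universal property to the algebra $B=\uend(A)\otimes\uend(A)$ together with the map $(\delta_A\otimes\id)\circ\delta_A: A\to\uend(A)\otimes\uend(A)\otimes A$. This composite is an algebra morphism, and it preserves the grading on $A$ in the required sense because $\delta_A$ does. The universal property therefore produces a unique algebra map $\Delta:\uend(A)\to\uend(A)\otimes\uend(A)$ with $(\Delta\otimes\id)\circ\delta_A=(\delta_A\otimes\id)\circ\delta_A$. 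The counit $\epsilon:\uend(A)\to k$ will be produced in exactly the same way, taking $B=k$ and the tautological map $A\cong k\otimes A$.

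For the bialgebra axioms, I would observe that $(\Delta\otimes\id)\circ\Delta$ and $(\id\otimes\Delta)\circ\Delta$ are both algebra maps $\uend(A)\to\uend(A)^{\otimes 3}$ whose tensor with $\id_A$, when composed with $\delta_A$, equals the triple iteration $(\delta_A\otimes\id\otimes\id)\circ(\delta_A\otimes\id)\circ\delta_A$. The uniqueness clause of the universal property, now applied with $B=\uend(A)^{\otimes 3}$, then forces the two composites to coincide, i.e.\ coassociativity holds. The counit axioms are verified by the same pattern, comparing algebra maps $\uend(A)\to\uend(A)$ (identified with $\uend(A)\otimes k$ and $k\otimes\uend(A)$) after composing with $\delta_A$. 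This shows $\uend(A)$ is a bialgebra, and the relation $(\Delta\otimes\id)\circ\delta_A=(\delta_A\otimes\id)\circ\delta_A$ is by construction the comodule algebra axiom for $\delta_A$.

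For part (2), Definition \ref{universal} already furnishes $g$ as an algebra morphism; the only thing left to check is that $g$ is compatible with comultiplication and counit. I would compare the two algebra maps $(g\otimes g)\circ\Delta$ and $\Delta_B\circ g$ from $\uend(A)$ to $B\otimes B$: after tensoring with $\id_A$ and composing with $\delta_A$, both reproduce the iterated coaction $(f\otimes\id)\circ f:A\to B\otimes B\otimes A$, using the defining relation of $\Delta$ together with the $B$-comodule algebra axiom for $f$. The uniqueness clause applied to the target $B\otimes B$ then forces $(g\otimes g)\circ\Delta=\Delta_B\circ g$. The counit compatibility $\epsilon_B\circ g=\epsilon$ follows analogously by comparing algebra maps into $k$.

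No serious obstacle appears; the argument is a sequence of routine applications of the universal property together with its uniqueness clause. The only genuine point requiring care is to verify at each step that the candidate map $A\to(\,\cdot\,)\otimes A$ to which one applies the universal property is an algebra map and respects the grading in its second factor. This is automatic from the graded hypotheses on $\delta_A$ and $f$, together with the fact that tensoring and composing graded maps stays within the graded setting, so no further ingredient is needed.
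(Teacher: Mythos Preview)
Your argument is correct and is precisely the standard proof one would give; the paper itself does not supply a proof but simply refers the reader to Pareigis~\cite{pareigis-2}, so there is nothing to compare against. One small notational slip: the iterated coaction you use to define $\Delta$ should be written $(\id\otimes\delta_A)\circ\delta_A$ rather than $(\delta_A\otimes\id)\circ\delta_A$, since $\delta_A$ has domain $A$ and one is applying it to the second tensor factor of $\uend(A)\otimes A$; the same remark applies to the analogous composites later in your sketch.
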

	
The bialgebra $\uend(A)$
turns out to have a very nice representation theory when $A$ is Koszul. It was studied by the second author and B.~Kriegk in~\cite{kriegk-vandenbergh} and forms part of the motivation for this work.

Every bialgebra has a Hopf envelope, as proven by Takeuchi~\cite{takeuchi}. A detailed proof of the following theorem can be found in Pareigis~\cite{pareigis-2} (see Theorem 2.6.3).

	\begin{theorem}
	\label{hopf}
	Let $B$ be a bialgebra. Then there exists a Hopf algebra $H(B)$, called the Hopf envelope of $B$, 
	and a homomorphism of bialgebras $i:B \to H(B)$ such that for every Hopf algebra $H$ and for every 
	homomorphism of bialgebras $f:B \to H$, there is a unique homomorphism of Hopf algebras 
	$g:H(B) \to H$ such that the diagram
	$$
	\begin{tikzcd}
		B \drar[swap]{f} \rar{i} & H(B) \dar[dashed]{g} \\
		& H 
	\end{tikzcd}
	$$
	commutes.
	\end{theorem}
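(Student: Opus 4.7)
The plan is to follow Takeuchi's original construction, building $H(B)$ by freely adjoining formal antipode iterates of $B$ and then forcing the antipode axiom. The motivating observation is that an antipode $S$ on a Hopf algebra is a bialgebra anti-homomorphism, equivalently a bialgebra map $B \to B^{op,cop}$; iterating, $S^n$ is a bialgebra map landing in $B$ or in $B^{op,cop}$ according to the parity of $n$.

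For each $n \geq 0$ I would introduce a copy $B^{(n)}$ of $B$ as a vector space, giving it the bialgebra structure of $B$ when $n$ is even and of $B^{op,cop}$ when $n$ is odd; write $x^{(n)}$ for the image of $x\in B$. Let $T$ be the free product (coproduct in the category of bialgebras) of the $B^{(n)}$, which can be realized concretely as the tensor algebra on $\bigoplus_{n\geq 0} B$ modulo relations forcing each inclusion $B^{(n)} \hookrightarrow T$ to be a sub-bialgebra. Let $J \subset T$ be the two-sided ideal generated by the elements
$$
\sum x^{(n)}_{(1)}\, x^{(n+1)}_{(2)} - \epsilon(x)\cdot 1 \qquad \text{and} \qquad \sum x^{(n+1)}_{(1)}\, x^{(n)}_{(2)} - \epsilon(x)\cdot 1
$$
for all $x \in B$ and $n\geq 0$, where the Sweedler indices refer to the original coproduct of $B$ (with the order of summation adjusted according to the parity of $n$ so as to respect the alternating structures). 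Finally, set $H(B) := T/J$ with $i : B = B^{(0)} \hookrightarrow T \twoheadrightarrow H(B)$.

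The main technical step, and what I expect to be the principal obstacle, is the verification that $J$ is a \emph{bi-ideal}, i.e.\ a coideal as well as an ideal. This reduces to a Sweedler-notation calculation using coassociativity and the alternation between even and odd copies, and is precisely the place where the choice $B^{op,cop}$ (rather than $B^{op}$ or $B^{cop}$ alone) is essential. Once this is granted, $H(B)$ inherits a well-defined bialgebra structure from $T$, the formula $S(x^{(n)}) := x^{(n+1)}$ extends uniquely to an algebra anti-homomorphism $S : H(B) \to H(B)$, and the generators of $J$ are by construction the relations asserting that $S$ is an antipode; thus $H(B)$ is a Hopf algebra.

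For the universal property, given a Hopf algebra $H$ with antipode $S_H$ and a bialgebra map $f : B \to H$, I would define $\tilde f : T \to H$ on generators by $x^{(n)} \mapsto S_H^n(f(x))$. Since $S_H$ is a bialgebra anti-homomorphism, $\tilde f$ restricts to a bialgebra map on each $B^{(n)}$ and hence extends to the free product, and the generators of $J$ are mapped to expressions of the form $\sum f(x_{(1)}) S_H(f(x_{(2)})) - \epsilon(x)\cdot 1$ (and the symmetric version), which vanish in $H$. Thus $\tilde f$ descends to a Hopf algebra map $g : H(B) \to H$ with $g \circ i = f$. Uniqueness is immediate, because $H(B)$ is generated as an algebra by $i(B)$ together with its iterated antipode images $S^n(i(B))$, so any extension must satisfy $g(S^n(i(x))) = S_H^n(f(x))$.
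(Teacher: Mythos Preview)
Your proposal is correct and is precisely Takeuchi's construction, which is also the construction the paper uses. Note that the paper does not actually prove this theorem: it is stated with a reference to Takeuchi and Pareigis, and the construction is only sketched in Remark~\ref{rem:Hopf}. That remark adjoins formal symbols $s^n(b)$ to $B$ subject to linearity, (anti-)multiplicativity according to parity, and the antipode relations, then defines $\Delta$, $\epsilon$, $S$ on generators. Your packaging via the free product of the bialgebras $B^{(n)}$ (with alternating $B$/$B^{op,cop}$ structure) is the same thing: your relations (1)--(2) in the tensor-algebra description are exactly what make each $B^{(n)}$ a sub-bialgebra, and your ideal $J$ corresponds to the paper's relations (3). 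The technical point you flag, that $J$ is a coideal, is the only nontrivial verification and is indeed the heart of the argument; the paper's remark simply asserts ``a computation shows that these definitions are compatible with the relations we have imposed,'' so you have in fact gone slightly further than the paper does.
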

	
\begin{remark} \label{rem:Hopf}
The construction of $H(B)$ from $B$ is as follows: we freely adjoin to $B$ (as an algebra) variables ${s}^n(b)$ for $n\ge 1$, $b\in B$ 
and we impose the following relations
	\begin{enumerate}
	\item For $\lambda_1,\lambda_2\in k$, $b_1,b_2\in B$: ${s}^n(\lambda_1b_1+\lambda_2b_2)=\lambda_1 {s}^n(b_1)+\lambda_2 {s}^n(b_2)$.
	Furthermore ${s}^n(1)=1$.
	\item Let $a,b\in B$. If $n$ is even then ${s}^n(ab)={s}^n(a){s}^n(b)$ and if $n$ is odd then ${s}^n(ab)={s}^n(b){s}^n(a)$.
	\item For all $b\in B$: $\sum_b {s}^{n+1}(b_{(1)}){s}^n(b_{(2)})=\epsilon(b)$, 
	$\sum_b {s}^n(b_{(1)}) {s}^{n+1}(b_{(2)})=\epsilon(b)$.
	\end{enumerate}
The resulting algebra $H(B)$ is made into Hopf algebra by 
defining the coproduct, counit and antipode on $B$ as follows (with $n \geq 0$, where we identify $s^0(b)$ with $b$)
\begin{align*}
\epsilon(s^n(b))&=\epsilon(b)\\
\Delta({s}^n(b))&=
\begin{cases}
{s}^n(b_{(1)})\otimes {s}^n(b_{(2)})&\text{if $n$ is even}\\
{s}^n(b_{(2)})\otimes {s}^n(b_{(1)})&\text{if $n$ is odd}
\end{cases}\\
S({s}^n(b))&={s}^{n+1}(b)
\end{align*}
A  computation shows that these definitions are compatible with the relations we have imposed.
\end{remark}

        We will denote the Hopf envelope of $\uend(A)$ by
        $\uaut(A)$. Using Definition~\ref{universal}, there is a
        morphism of algebras $\delta_A:A \to \uaut(A) \otimes A$ such
        that $A$ is a comodule-algebra over $\uaut(A)$. This easily
        gives the final universal property.

	\begin{corollary}
	If $H$ is a Hopf algebra and $A$ is an $H$-comodule algebra by $f:A \to H \otimes A$ such that $f(A_n)\subset H\otimes A_n$, then there is a 
	unique morphism of Hopf algebras $g:\uaut(A) \to H$ such that the diagram 
	$$
	\begin{tikzcd}
	A \drar[swap]{f} \rar{\delta_A} & \uaut(A) \otimes A \dar[dashed]{g \otimes 1} \\
	& H \otimes A
	\end{tikzcd}
	$$
       commutes.
	\end{corollary}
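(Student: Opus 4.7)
The plan is to chain the two universal properties available in the section: Proposition \ref{comodulealgebra}(2), which characterizes $\uend(A)$, and Theorem \ref{hopf}, which characterizes the Hopf envelope $H(\uend(A)) = \uaut(A)$.

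First I would unpack the map $\delta_A: A \to \uaut(A) \otimes A$ appearing in the statement. By the construction just before the corollary, this $\delta_A$ is obtained from the $\uend(A)$-coaction by composing with $i \otimes 1$, where $i: \uend(A) \to \uaut(A)$ is the canonical morphism from Theorem \ref{hopf}. Thus, writing $\delta_A^{\uend}: A \to \uend(A) \otimes A$ for the universal coaction of Definition \ref{universal}, we have $\delta_A = (i \otimes 1) \circ \delta_A^{\uend}$, and in particular the image of each $A_n$ under $\delta_A^{\uend}$ lies in $\uend(A) \otimes A_n$.

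Given a Hopf algebra $H$ and $f: A \to H \otimes A$ as in the hypothesis, I would apply Proposition \ref{comodulealgebra}(2) with $B = H$ (viewing $H$ as a bialgebra by forgetting the antipode) to produce a unique bialgebra morphism $h: \uend(A) \to H$ with $(h \otimes 1) \circ \delta_A^{\uend} = f$. Then I would invoke Theorem \ref{hopf} for the bialgebra morphism $h$: this yields a unique Hopf algebra morphism $g: \uaut(A) \to H$ with $g \circ i = h$. Commutativity of the desired diagram follows at once:
\[
(g \otimes 1) \circ \delta_A = (g \otimes 1)(i \otimes 1) \circ \delta_A^{\uend} = (h \otimes 1) \circ \delta_A^{\uend} = f.
\]

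For uniqueness, suppose $g': \uaut(A) \to H$ is any Hopf algebra morphism making the triangle commute. Then $h' := g' \circ i$ is a bialgebra morphism $\uend(A) \to H$ satisfying $(h' \otimes 1) \circ \delta_A^{\uend} = (g' \otimes 1) \circ \delta_A = f$, so the uniqueness clause in Proposition \ref{comodulealgebra}(2) forces $h' = h$, and then the uniqueness clause in Theorem \ref{hopf} forces $g' = g$. There is no real obstacle here: the only thing to check carefully is that the $\delta_A$ written in the corollary really does factor through $i \otimes 1$ as claimed, which is immediate from the way $\delta_A$ for $\uaut(A)$ was introduced in the paragraph preceding the corollary.
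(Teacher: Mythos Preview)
Your proposal is correct and follows exactly the same two-step strategy as the paper: apply Proposition~\ref{comodulealgebra}(2) to obtain a bialgebra map $\uend(A)\to H$, then apply Theorem~\ref{hopf} to extend it to a Hopf algebra map $\uaut(A)\to H$. The paper's proof is a two-line sketch, while you have spelled out the factorization of $\delta_A$ through $i\otimes 1$ and the uniqueness argument; these are precisely the details the paper leaves implicit.
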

	\begin{proof}
	First use the universal property of Proposition \eqref{comodulealgebra} to get a morphism 
	$g':\uend(A) \to H$, and then use the one of Proposition \eqref{hopf} to get a map $g$.
	\end{proof}
Following this corollary we call  $\uaut(A)$ the universal
        coacting Hopf algebra on $A$.
\subsection{Generators and relations} 
\label{sec:genrel}
In the rest of this paper we will concentrate on the first non-trival case $A=k[x,y]$ with the grading given by $|x|=|y|=1$.
In Section $5$ of~\cite{manin}, Manin shows that a (finite!) presentation of $\uend(A)$ is given by:
$$
\uend(k[x,y])=\frac{k \langle a,b,c,d \rangle}{I},
$$
where $I$ is the ideal generated by the relations:
$$
\begin{array}{lllll}
ac -ca & = & 0, \\
ad-cb & = & da-bc, \\
bd -db & = & 0.
\end{array}
$$
Denoting by $M$ the generator matrix, i.e.
$$
M = \begin{pmatrix}
a & b \\
c & d
\end{pmatrix},
$$
the bialgebra structure is given by 
$$
\Delta(M)=M \otimes M,
\epsilon(M)
=
\textnormal{Id},
$$
where Id denotes the identity matrix. 
Since there is a bialgebra epimorphism
\begin{align}
\label{surjection}
\uend(k[x,y]) \twoheadrightarrow \Oscr(M_2)
\end{align}
to the coordinate ring of the reductive algebraic monoid $M_2$, we use the notation $\Oscr_{\tt nc}(M_2)$ for $\uend(k[x,y])$.

The universal coacting Hopf algebra $\uaut(k[x,y])$ can be obtained from $\uend(k[x,y])$ as follows: let
$$
\uaut(k[x,y])=\frac{k \langle a,b,c,d,\delta,\delta^{-1} \rangle}{I},
$$
where $I$ is the ideal generated by the relations:
\begin{equation}
\label{eq:relations}
\begin{aligned}
ac-ca & =  0 =  bd-db \\ 
ad-cb & =  \delta  =  da-bc\\
\delta \delta^{-1} & =  1  =  \delta^{-1}\delta,\\ 
a\delta^{-1}d-b\delta^{-1}c & =  1  =  d\delta^{-1}a-c\delta^{-1}b,\\
b\delta^{-1}a-a\delta^{-1}b & =  0  =  c\delta^{-1}d-d\delta^{-1}c
\end{aligned}
\end{equation}
The bialgebra structure is the one above, extended by:
\begin{align}
\Delta(\delta^{\pm 1}) &= \delta^{\pm 1} \otimes \delta^{\pm 1}, \\
\epsilon(\delta^{\pm 1}) &=1.
\end{align}
The antipode is determined by 
\begin{align}
\label{eq:solveforSM}
S(M)
&=
\begin{pmatrix}
\delta^{-1}d & -\delta^{-1}b \\ 
-\delta^{-1}c & \delta^{-1}a
\end{pmatrix} \\
S(\delta^{\pm 1}) &=\delta^{\mp 1}.
\end{align}
	\begin{proposition}
	The Hopf algebra defined above is the universal coacting Hopf algebra of $k[x,y]$.
	\end{proposition}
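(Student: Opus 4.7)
The plan is to identify the Hopf algebra $H$ defined by the generators and relations above with the Hopf envelope $H(\uend(k[x,y])) = \uaut(k[x,y])$. The key technical point, on which the bulk of the argument rests, is that in any Hopf algebra receiving a bialgebra map from $\uend(k[x,y])$, the antipode of the generator matrix $M$ is forced to take the explicit form appearing in~\eqref{eq:solveforSM}.

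First I would verify that $\delta := ad - bc = da - cb$ is a grouplike element of $\uend(k[x,y])$: multiplicativity of $\Delta$ combined with the Manin relations yields $\Delta(\delta) = \delta \otimes \delta$ and $\epsilon(\delta) = 1$. Consequently, in any Hopf algebra receiving a bialgebra map from $\uend(k[x,y])$, the image of $\delta$ is automatically invertible, being grouplike.

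The heart of the argument, and the step I expect to require the most care, is to derive the explicit formulas for $S(M)$ inside $\uaut(k[x,y])$. Writing the antipode axiom $S \star \mathrm{id} = \epsilon = \mathrm{id} \star S$ on $a,b,c,d$ yields the matrix identities $S(M)\cdot M = I = M \cdot S(M)$. Combining $M\cdot S(M) = I$ with the Manin relations one derives, for example,
\[
(da-bc)\,S(a) = d\,(a\,S(a)) - b\,(c\,S(a)) = d(1 - b\,S(c)) - b(-d\,S(c)) = d - db\,S(c) + bd\,S(c) = d,
\]
where the last equality uses $bd = db$. Since $da - bc = \delta$ is invertible, this forces $S(a) = \delta^{-1}d$. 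Entirely analogous manipulations, each exploiting one of the three Manin relations together with the remaining antipode identities, give $S(b) = -\delta^{-1}b$, $S(c) = -\delta^{-1}c$ and $S(d) = \delta^{-1}a$. Substituting these back into the eight antipode equations $S(M)\cdot M = I = M\cdot S(M)$ then produces precisely the non-trivial relations involving $\delta^{-1}$ appearing in~\eqref{eq:relations}; each entry requires a separate manipulation and the order of non-commuting factors matters throughout.

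To conclude, I would assemble mutually inverse Hopf algebra maps between $H$ and $\uaut(k[x,y])$. For one direction, I would first check directly that $H$ itself is a Hopf algebra, with $\Delta$ and $\epsilon$ extended by $\Delta(\delta^{\pm 1}) = \delta^{\pm 1}\otimes\delta^{\pm 1}$, $\epsilon(\delta^{\pm 1}) = 1$ and antipode~\eqref{eq:solveforSM} (a routine verification on generators); Theorem~\ref{hopf} applied to the composite bialgebra map $\uend(k[x,y]) \to H$ then supplies a Hopf algebra map $\uaut(k[x,y]) \to H$. For the reverse map, the computation of the previous paragraph, carried out inside $\uaut(k[x,y])$ itself, shows that all relations of $H$ already hold there, yielding a Hopf algebra map $H \to \uaut(k[x,y])$. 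The two maps agree on $a,b,c,d,\delta^{\pm 1}$; since $S^2(a) = S(\delta^{-1}d) = S(d)S(\delta^{-1}) = \delta^{-1}a\delta$ and similarly for the other generators, iterated antipodes stay inside the subalgebra generated by these elements, so $\uaut(k[x,y])$ is itself generated by them and the two maps are mutual inverses.
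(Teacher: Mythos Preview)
Your proposal is correct and follows essentially the same route as the paper: both arguments hinge on showing that in the Hopf envelope the grouplike $\delta$ becomes invertible, that the antipode equations $Ms(M)=I$ together with the Manin relations force the explicit matrix form \eqref{eq:solveforSM} for $s(M)$, and that iterated antipodes then stay in the subalgebra generated by $a,b,c,d,\delta^{\pm1}$. Your organization (mutually inverse Hopf maps) is slightly tidier than the paper's direct implementation of Remark~\ref{rem:Hopf}, and your explicit derivation of $S(a)=\delta^{-1}d$ is a nice touch; note only the small slip $\delta=ad-bc$ (it is $ad-cb$), and that of the eight substituted antipode equations it is the four from $M\,S(M)=I$ that yield the new $\delta^{-1}$-relations, while $S(M)\,M=I$ follows automatically from the Manin relations.
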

	\begin{proof}
This follows by implementing the procedure outlined in Remark \ref{rem:Hopf}. We will only sketch it. Since $\delta$ is
grouplike
the symbol $s(\delta)$ satisfies $s(\delta)\delta=\delta s(\delta)=1$ (by \ref{rem:Hopf}(4)) and hence $s(\delta)$
is a twosided inverse of $\delta$ which we denote by $\delta^{-1}$. 

Also by \ref{rem:Hopf}(4) we have
\[
s(M)M=\Id=Ms(M)
\]
It turns out that $\Id=Ms(M)$ can be solved and yields 
\[
\label{eq:solveforSM}
s(M)
=
\begin{pmatrix}
\delta^{-1}d & -\delta^{-1}b \\ 
-\delta^{-1}c & \delta^{-1}a
\end{pmatrix}
\]
Plugging the solution into $s(M)M$
yield the 4 last relations in \eqref{eq:relations}. Having done this it turns out that the relations in Remark \ref{rem:Hopf} imply that the $s^n(M)$ are all expressible in $a,b,c,d,\delta^{-1}$ for $n\ge 2$. Hence we find that $\uaut(k[x,y])$
as an algebra is described by \eqref{eq:relations}. The only thing that remains to be done is to extend $\Delta$ to
$\uaut(k[x,y])$
and define $S$ on it, using the formules Remark in \ref{rem:Hopf}. This finishes the proof.
	\end{proof}
\begin{remark}
        Notice that this Hopf algebra even has a bijective antipode,
        so it also fulfills the universal property of
        Theorem~\ref{hopf} if one demands it to be universal amongst
        Hopf algebras with bijective antipode. 
\end{remark}
Since there is an
        obvious Hopf algebra epimorphism
\begin{align}
\label{quotient}
\uaut(k[x,y]) \twoheadrightarrow \Oscr(\GL_2)
\end{align}
to the coordinate ring of the reductive algebraic group $\GL_2$, we denote $\uaut(k[x,y])$ by $\Oscr_{\tt nc}({{\GL_2}})$, and think of it as the coordinate ring of a noncommutative version of ${{\GL_2}}$. 

\begin{remark}
One can check that $S^2 \neq 1$ and this Hopf algebra is neither braided nor cobraided. 
\end{remark}

To facilitate the computations later on, we introduce a convenient basis for this Hopf algebra.

	\begin{lemma}
	\label{basis}
	The Hopf algebra $\Oscr_{\tt nc}({{\GL_2}})$ has a basis of the form
	$$
	\delta^{x_{1}}w_{1}\delta^{x_{2}}w_{2} \ldots w_{n} \delta^{x_{n}},
	$$
	where $x_{i} \in \z$, $x_{i} \neq 0$ for $i \notin \{1,n\}$, and the $w_{i}$ are non-empty words in the 	symbols $a,b,c,d$ with non-decreasing row index. If $x_{i}=-1$, and $i \notin \{1,n\}$ then the column 	index of the symbol on the left and on the right of $\delta^{x_{i}}=\delta^{-1}$ should be 
	non-decreasing as well.
	\end{lemma}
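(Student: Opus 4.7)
The plan is to apply Bergman's diamond lemma to a suitably oriented rewriting system. Orient the defining relations \eqref{eq:relations} as
\begin{align*}
ca &\to ac, & db &\to bd, \\
da &\to bc+\delta, & cb &\to ad-\delta, \\
\delta\delta^{-1} &\to 1, & \delta^{-1}\delta &\to 1, \\
b\delta^{-1}a &\to a\delta^{-1}b, & d\delta^{-1}c &\to c\delta^{-1}d, \\
b\delta^{-1}c &\to a\delta^{-1}d-1, & d\delta^{-1}a &\to c\delta^{-1}b+1,
\end{align*}
on the free algebra $k\langle a,b,c,d,\delta,\delta^{-1}\rangle$. These span the same ideal as the defining relations by inspection. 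Fix the alphabet order $a<b<c<d<\delta<\delta^{-1}$ and compare monomials by length first, then by lex for monomials of equal length. This order is admissible (compatible with concatenation) and well-founded, and each rewrite above replaces its left-hand side by a linear combination of strictly smaller monomials, so the rewriting system terminates.

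Next I would identify the irreducible monomials with the basis claimed in the lemma. Forbidding $\delta\delta^{-1}$ and $\delta^{-1}\delta$ forces every maximal run of $\delta$-symbols to have constant sign and hence to collapse into a single power $\delta^{x_i}$; forbidding $ca,db,da,cb$ inside each intermediate word $w_i\in\{a,b,c,d\}^{\ast}$ is precisely the non-decreasing row-index condition; and forbidding the four patterns $b\delta^{-1}a$, $b\delta^{-1}c$, $d\delta^{-1}a$, $d\delta^{-1}c$ is the non-decreasing column-index condition around an isolated $\delta^{-1}$. The interior $w_i$ must be non-empty, else the two flanking $\delta$-powers would either combine trivially or meet via $\delta\delta^{-1}\to 1$ or $\delta^{-1}\delta\to 1$.

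By the diamond lemma it remains to resolve all ambiguities. There are no inclusion ambiguities, since no left-hand side is a subword of another. A short case analysis of pairs whose last letter is the first letter of another left-hand side yields exactly ten overlap ambiguities: $\delta\delta^{-1}\delta$, $\delta^{-1}\delta\delta^{-1}$, the four $\{cb,db\}\delta^{-1}\{a,c\}$, and the four $\{b,d\}\delta^{-1}c\{a,b\}$. For each I would reduce in both possible orders and check that the two results coincide. These checks reduce to the consistency of the defining relations, with the constants $\pm 1$ produced by the rewrites of $b\delta^{-1}c$ and $d\delta^{-1}a$ cancelling exactly against constants produced by $\delta\delta^{-1}\to 1$ and $\delta^{-1}\delta\to 1$. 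For instance, $cb\delta^{-1}c$ reduces via $cb\to ad-\delta$ to $ad\delta^{-1}c-c\to ac\delta^{-1}d-c$, and via $b\delta^{-1}c\to a\delta^{-1}d-1$ to $ca\delta^{-1}d-c\to ac\delta^{-1}d-c$; the other nine are entirely analogous.

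The main obstacle is not any single step but the bookkeeping in this confluence check — ten overlaps, each to be reduced in two ways, with constant terms flying around — together with the initial verification that the length-lex order is compatible with rewrites (9) and (10), where a length-three monomial acquires a length-zero summand. Once these routine but tedious computations are carried out, the diamond lemma produces the asserted basis.
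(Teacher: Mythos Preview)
Your approach is essentially the same as the paper's: a direct application of Bergman's diamond lemma to the relations \eqref{eq:relations}, oriented so that the irreducible words are exactly the claimed basis. The paper's proof is a single sentence (``routine application of the Bergman diamond lemma using the ordering $\delta^{-1}<\delta<a<b<c<d$''), and you have simply filled in the details --- the explicit rewrite rules, the enumeration of the ten overlap ambiguities, and a sample resolution. Your alphabet order $a<b<c<d<\delta<\delta^{-1}$ differs from the paper's, but this is immaterial: in every rewrite the leading monomials are compared either by length or by a first letter drawn from $\{a,b,c,d\}$, so only the relative order $a<b<c<d$ is actually used.
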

	\begin{proof}
	This is a routine application of the Bergman diamond lemma using the ordering 
	$
	\delta^{-1}<\delta<a<b<c<d.
	$
	\end{proof}

Just like in~\cite{kriegk-vandenbergh}, one could consider $\Oscr_{\tt nc}({{\GL_2}})$ as a graded algebra in the obvious way, and study the representations of the $i$-th graded piece. Unlike for $\Oscr_{\tt nc}(M_2)$ however, the corresponding degree $i$ subcoalgebras are not finite dimensional, so this is not very useful. In the commutative setting, it is easy to pass between rational and polynomial representations, and one reduces this problem to the polynomial representation theory of $\Oscr(M_2)$. Since $\delta$ is not central in $\Oscr_{\tt nc}({{\GL_2}})$, this does not work in our setting.

\section{Intrinsic standard, costandard and simple comodules}
\label{section:standard}
In this  section, we introduce $\Oscr_{\text{nc}}({{\GL_2}})$-comodules $\Delta_I(\lambda)$, $\nabla_I(\lambda)$ which will 
eventually be shown to be the standard and costandard modules for a suitable
quasi-hereditary structure on $\Oscr_{\text{nc}}({{\GL_2}})$.
\subsection{Some canonical representations and their weights}
Put 
	\begin{equation}
	\Oscr(T):=\Oscr_{\tt nc}({{\GL_2}})/(b,c)=k[a^{\pm 1},d^{\pm 1}].
	\end{equation}
We see that $\Oscr(T)$ is the actual 
coordinate ring of a commutative two-dimensional torus. We will
identify the character group $X(T)$ (``weights'') of $T$ with the Laurent monomials in $a$, $d$.
%
We
give the weights the lexicographical ordering for $a<d$, i.e. $a^i
d^j < a^{i'}d^{j'}$ iff $j < j'$ or $j=j'$ and $i < i'$. Define two
involutions $(-)^*$ and $\sigma$ on the weights by
\begin{equation}
\begin{aligned}
(a^x d^y)^* &=a^{-y}d^{-x}, \\
\sigma(a^x d^y) &=a^y d^x.
\end{aligned}
\end{equation}
These involutions are incarnations of the action of the non-trivial Weyl group element of ${{\GL_2}}$. We now define the partially ordered set indexing the simples of $\Oscr_{\tt nc}({{\GL_2}})$.

	\begin{definition}
	\label{definition:lambda}
	The set $\Lambda$ consists of all formal expressions of the form
	\begin{equation}
	\lambda := \delta^{x_1} d^{y_1} \cdots \delta^{x_n} d^{y_n},
	\end{equation}
	where $x_i \in \z$ and $y_i \in \n$. 
We define a ``weight function'' 
on $\Lambda$ as
follows:
	\begin{equation}
	{\tt wt}:\Lambda \to X(T):\lambda \mapsto a^{\sum x_i}d^{\sum x_i+y_i}.
	\end{equation}
	\end{definition}
In particular ${\tt wt}(\delta)=ad$, ${\tt wt}(d)=d$. 

Note that ${\tt wt}$ is not surjective. Its image consists those weights
$a^x d^y$ for which $y\ge x$. We will put $X(T)^+=\im {\tt wt}$. The
elements of $X(T)^+$ will be called dominant weights. 

        Elements of $\Lambda$ are ordered according to the ordering on
        $X(T)$. I.e.  $\mu<_2\lambda$ if and only if
        ${\tt wt}(\mu)<{\tt wt}(\lambda)$. In particular elements of
        $\Lambda$ with the same weight are considered incomparable,
        unless they are equal. The ordering is denoted by $<_2$ since
        later we will introduce a finer one denoted by $<_1$. 

 The
        map $(-)^*$ is defined on $\Lambda$ by demanding that
\begin{equation}
\begin{aligned}
d^* &=d\delta^{-1} \\
\delta^* &=\delta^{-1} \\
(\lambda \mu)^* &=\mu^* \lambda^*
\end{aligned}
\end{equation} 
With this definition, we
have that ${\tt wt}(\lambda^*)={\tt wt}(\lambda)^*$. Notice however that $(-)^\ast:\Lambda\rightarrow \Lambda$ is not an involution. 

        The weights of a $\Oscr_{\tt nc}({{\GL_2}})$-representation $X$ are
        defined in the standard way, i.e.~$X$ may be considered as an
        $\Oscr(T)$-comodule via the composition
\[
X\rightarrow \Oscr_{\tt nc}({{\GL_2}})\otimes X\rightarrow \Oscr(T)\otimes X
\]
so one can decompose $X$ into one-dimensional,
        simple torus representations $k_t$, for~$t$ a monomial in
        $k[a^{\pm 1},d^{\pm 1}]$, and
\begin{equation}
k_t \xrightarrow{\delta_2} \Oscr(T) \ot k_t:1 \mapsto t \ot 1.
\end{equation}
When no confusion can arise, we will abbreviate $k_t$ by $t$. 

Let $R=k r$ and $R^{-1}=k r^{-1}$ be the one-dimensional
comodules defined by
\begin{equation}
r^{\pm 1} \mapsto \delta^{\pm 1} \ot r^{\pm 1},
\end{equation}
and let $V=k e_1 +k e_2$ be the two-dimensional comodule defined by
\begin{equation}
\begin{aligned}
\begin{pmatrix} e_1 \\ e_2 \end{pmatrix}
\mapsto 
\begin{pmatrix} a & b \\ c & d \end{pmatrix}
\otimes
\begin{pmatrix} e_1 \\ e_2 \end{pmatrix}
\end{aligned}
\end{equation}
\begin{definition}
\label{definition:nabla}
For $\lambda \in \Lambda$ as in Definition~\ref{definition:lambda}, put
$
M(\ll)=R^{\ot x_1} \ot V^{\ot y_1} \ot \cdots \ot R^{\ot x_n} \ot V^{\ot y_n}
$
and let $\nabla_I(\lambda)$ be the subcomodule of the regular comodule $\oo$ spanned by vectors
\begin{equation}
\delta^{x_1}b^{y'_1}d^{y''_1} \cdots \delta^{x_n} b^{y'_n}d^{y''_n},
\end{equation}
where $y'_i+y''_i=y_i$.
\end{definition}

From now on, we will often drop tensor signs to compactify the notation. 
Recall that the right dual  of an object $X$ in a monoidal category  is 
 a triple $(X^\ast,\eval_X,\coeval_X)$ consisting of an object $X^\ast$  and morphisms
\begin{equation}
  \eval_X:X^* \otimes X \to 1 \textnormal{ and } \coeval_X:1 \to X \otimes X^*,
\end{equation}
such that the compositions 
\begin{equation}
  X \xrightarrow{\coeval_X \otimes 1} X \otimes X^* \otimes X \xrightarrow{1 \otimes 
    \eval_X} X,
\end{equation}
and
\begin{equation}
  X^* \xrightarrow{1 \otimes \coeval_X} X^* \otimes X \otimes X^* \xrightarrow{\eval
    _X \otimes 1} X^*
\end{equation}
are the identity morphisms. The left dual ${}^\ast\! X$ is defined similarly. Duals are unique
up to unique isomorphism. Usually we will just write ${}^\ast\! X$, $X^\ast$, leaving
the evaluation and coevaluation morphisms implicit.

	\begin{lemma}
	With the above conventions, one has that $V^* \cong VR^{-1}, {}^*V \cong R^{-1}V$ and 
	$R^* \cong {}^*R \cong R^{-1}$.
	\end{lemma}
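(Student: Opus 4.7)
The plan is to reduce all three isomorphisms to the fact that $V \otimes V$ contains a canonical ``quantum determinant'' copy of $R$. First I would verify that the element $\omega := e_1 \otimes e_2 - e_2 \otimes e_1$ spans a one-dimensional subcomodule of $V \otimes V$ isomorphic to $R$. Expanding $\rho(\omega)$ via the tensor-product coaction yields four terms indexed by $e_i \otimes e_j$, and the Manin relations $ac=ca$, $bd=db$ kill the two diagonal terms, while $ad-cb=da-bc=\delta$ rewrites the off-diagonal contribution as $\delta \otimes \omega$. This gives a comodule embedding $\iota : R \hookrightarrow V \otimes V$.

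Given $\iota$, I would construct the isomorphism $V^* \cong VR^{-1}$ as the composition
\[
V^* \xrightarrow{1 \otimes \coeval_R} V^* \otimes R \otimes R^{-1} \xrightarrow{1 \otimes \iota \otimes 1} V^* \otimes V \otimes V \otimes R^{-1} \xrightarrow{\eval_V \otimes 1 \otimes 1} V \otimes R^{-1},
\]
which on a dual basis sends $e_1^* \mapsto e_2 r^{-1}$ and $e_2^* \mapsto -e_1 r^{-1}$ (up to a global sign dictated by the conventions). This is manifestly a $k$-linear bijection and, being a composition of comodule morphisms, is a comodule isomorphism. The parallel isomorphism ${}^*V \cong R^{-1}V$ is obtained from the mirror composition that uses the left-dual coevaluation $k \to R^{-1} \otimes R$ and the left-dual evaluation $V \otimes {}^*V \to k$ in place of their right-dual counterparts, sending ${}^*e_1 \mapsto -r^{-1}e_2$ and ${}^*e_2 \mapsto r^{-1}e_1$.

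The one-dimensional isomorphisms $R^* \cong {}^*R \cong R^{-1}$ are immediate: any one-dimensional comodule with grouplike coefficient $g$ has both duals given by the one-dimensional comodule with coefficient $S(g)$, and here $S(\delta)=\delta^{-1}$ by construction. The only technical point in the plan is the verification that $\iota$ is a comodule map, but this is precisely the raison d'\^etre of the Manin relations and amounts to a short bookkeeping check. An attractive feature of this categorical approach is that it never requires moving $\delta^{\pm 1}$ past elements of $\{a,b,c,d\}$, so it sidesteps any delicate question about the centrality of $\delta$.
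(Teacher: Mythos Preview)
Your argument is correct. The paper takes the more direct route of \emph{exhibiting} the duality data outright: it writes down explicit evaluation and coevaluation maps $VR^{-1}\otimes V\to k$ and $k\to V\otimes VR^{-1}$ (the latter being $1\mapsto (e_1e_2-e_2e_1)r^{-1}$) and declares that the remaining checks are easy. You instead start from the abstract dual $V^*$ furnished by the antipode and transport it to $VR^{-1}$ via the subcomodule inclusion $\iota:R\hookrightarrow V\otimes V$. The two packages are equivalent: your composition, evaluated on $e_i^*$, recovers precisely the paper's evaluation matrix, and the paper's coevaluation is literally $\iota$ tensored with $r^{-1}$. What your route buys is a clean explanation of \emph{why} the answer is $VR^{-1}$ (because $R$ sits inside $V\otimes V$) and an automatic zigzag identity; what the paper's route buys is self-containment, since it does not invoke the general existence of duals in Hopf comodule categories.
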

	\begin{proof}
We need to specify the evaluation and 
	coevaluation morphisms.  For $V$, it is easy to check that these morphisms are given by 
	\begin{equation}
	\begin{aligned}
	\eval_V &: VR^{-1}V \to k : 
	\begin{pmatrix} 
	e_1 r^{-1} e_1 & e_1 r^{-1} e_2 \\ 
	e_2 r^{-1} e_1 & e_2 r^{-1} e_2 
	\end{pmatrix}
	\mapsto
	\begin{pmatrix} 
	0 & -1 \\ 
	1 & 0 
	\end{pmatrix},
	\\
	\coeval_V &: k \to VVR^{-1} :
	1 \mapsto (e_1e_2 - e_2e_1)r^{-1}.
	\end{aligned}
	\end{equation}
	For left duals the proof is similar and for $R$ it is even easier. 
	\end{proof}
One  finds in particular 
\begin{equation}
\label{eq:Miso}
M(\lambda)^* \cong M(\lambda^*)\,.
\end{equation}
Below we will write $\Delta_I(\lambda^*)$ for $\nabla_I(\lambda)^\ast$.

The rather cumbersome formulation of the following lemma is due to the fact that neither
$(-)^\ast$ nor $\sigma(-)$ is compatible with the ordering $<_2$.
\begin{lemma} 
\label{lem:bijection} Both $\nabla_I(\lambda)$ and $M(\lambda)$ possess highest and lowest weights
$\mathtt{wt}(\lambda)$, $\sigma(\mathtt{wt}(\lambda))$ as $\Oscr(T)$-represenations (with the ordering $<$ introduced above),
each occurring with multiplicity one. The same holds for $\nabla_I(\lambda)^\ast$ and $M(\lambda)^\ast$
where the highest weights and lowest weights are respectively $\mathtt{wt}(\lambda)^\ast$
and $\sigma(\mathtt{wt}(\lambda))^\ast$.

Moreover the obvious epimorphism (of comodules)
\begin{equation}
M(\lambda) \twoheadrightarrow \nabla_I(\lambda)
\end{equation}
is  a bijection on highest and lowest weight vectors. The same holds for  the dual monomorphism,
\begin{equation}
\nabla_I(\lambda)^\ast \hookrightarrow  M(\lambda)^\ast
\end{equation}
\end{lemma}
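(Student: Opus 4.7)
My plan is to compute the $\Oscr(T)$-weight decompositions of $M(\lambda)$ and of $\nabla_I(\lambda)$ directly from the definitions and then compare them. The key tool is that $\Oscr_{\tt nc}(\GL_2)\twoheadrightarrow\Oscr(T)$ is a morphism of bialgebras, so $\Oscr(T)$-weights behave multiplicatively both under tensor products of comodules and under the internal multiplication on the regular comodule $\Oscr_{\tt nc}(\GL_2)$. Reading off $\Delta$ on the generators and using that $b,c$ become zero in $\Oscr(T)$, we obtain: $R=kr$ has weight $ad$; $V=ke_1\oplus ke_2$ has $e_1$ of weight $a$ and $e_2$ of weight $d$; and inside $\Oscr_{\tt nc}(\GL_2)$ the elements $\delta,b,d$ are weight vectors of weights $ad, a, d$ respectively.

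For $\lambda=\delta^{x_1}d^{y_1}\cdots\delta^{x_n}d^{y_n}$ and $y:=\sum_i y_i$, an elementary tensor in $M(\lambda)$ amounts to a choice, in each block $V^{\otimes y_i}$, of a word in $\{e_1,e_2\}$; letting $k$ denote the total number of $e_2$'s, its weight equals $a^{\sum x_i+y-k}d^{\sum x_i+k}$. With the lex order ($a<d$), this is maximized only at $k=y$, giving $\mathtt{wt}(\lambda)=a^{\sum x_i}d^{\sum x_i+y}$, realized uniquely by the all-$e_2$ tensor; and minimized only at $k=0$, giving $\sigma(\mathtt{wt}(\lambda))$, realized uniquely by the all-$e_1$ tensor. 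The identical computation applies to $\nabla_I(\lambda)$ with $b\leftrightarrow e_1$, $d\leftrightarrow e_2$ (using $bd=db$), and Lemma~\ref{basis} ensures the spanning vectors of $\nabla_I(\lambda)$ are linearly independent, so the extreme weight subspaces are genuinely one-dimensional, spanned by $\lambda$ and by $\delta^{x_1}b^{y_1}\cdots\delta^{x_n}b^{y_n}$.

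The obvious surjection $M(\lambda)\twoheadrightarrow\nabla_I(\lambda)$ sends the all-$e_2$ tensor to $\lambda$ and the all-$e_1$ tensor to $\delta^{x_1}b^{y_1}\cdots\delta^{x_n}b^{y_n}$, so it restricts to bijections on the one-dimensional highest- and lowest-weight subspaces. Passing to duals, weights get inverted and the lex order gets reversed, so the highest weight of $X^{\ast}$ is the inverse of the lowest weight of $X$; the identities $\sigma(\mathtt{wt}(\lambda))^{-1}=\mathtt{wt}(\lambda)^{\ast}$ and $\mathtt{wt}(\lambda)^{-1}=\sigma(\mathtt{wt}(\lambda))^{\ast}$ follow by direct computation and give the stated weights for $\nabla_I(\lambda)^{\ast}$ and $M(\lambda)^{\ast}$. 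The dual monomorphism $\nabla_I(\lambda)^{\ast}\hookrightarrow M(\lambda)^{\ast}$ is the transpose of the epimorphism above, hence again an isomorphism on the one-dimensional extreme weight subspaces. The only genuine obstacle is the bookkeeping around the interplay of $\sigma$, $(-)^{\ast}$, and inversion on weights; beyond that, the argument reduces to weight multiplicativity and counting.
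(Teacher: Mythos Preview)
Your proof is correct and follows essentially the same approach as the paper: both compute the $\Oscr(T)$-weight of each basis vector (using that $b\mapsto 0$, $c\mapsto 0$ in $\Oscr(T)$, so the weight depends only on the total count of $b$'s versus $d$'s, respectively $e_1$'s versus $e_2$'s), observe that the extremes are attained uniquely, and then handle the dual by negating weight exponents. You are slightly more explicit than the paper in invoking multiplicativity of weights, in citing Lemma~\ref{basis} for linear independence of the spanning set of $\nabla_I(\lambda)$, and in spelling out the identity $\sigma(t)^{-1}=t^{\ast}$ that underlies the dual computation, but there is no substantive difference in method.
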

\begin{proof}
Let $\lambda$ be as in Definition \ref{definition:lambda}
and let us consider $\nabla_I(\lambda)$. The weight of a basis vector
\begin{align}
\label{bbb}
\delta^{x_1}b^{y'_1}d^{y''_1} \cdots \delta^{x_n} b^{y'_n}d^{y''_n},
\end{align}
where $y'_{i}+y''_{i}=y_{i}$ is
\[
	a^{\sum (x_{i}+y'_i)} d^{\sum (x_{i}+y''_i)},
\] 
which is maximal if $y_i=y''_i$ and minimal if $y_i=y'_i$. In both cases we see that the weights
are as indicated.

The weights of the basis vectors of  $\nabla_I(\lambda)^\ast$ are 
\[
(a^{\sum (x_{i}+y'_i)} d^{\sum (x_{i}+y''_i)})^\ast=a^{-\sum (x_{i}+y''_i)} d^{-\sum (x_{i}+y'_i)},
\]
which is again maximal if $y_i=y''_i$ and minimal if $y_i=y'_i$. The weights are once
again as indicated. 

The arguments for $M(\lambda)$ are the same and the other claims of the lemma
are obvious.
\end{proof}
\subsection{Filtered coalgebras}
As a preparation for the sequel we remind the reader of some basic properties of filtered coalgebras.
	\begin{definition}
          A filtered coalgebra $C$ is a coalgebra $C$ equipped with a
          filtration $C=\cup_{n \geq 0} C_{n}$, where $(C_{n})_{n}$ is
          an ascending chain of subspaces, satisfying
	\begin{align}
	\label{filter}
	\Delta(C_{n}) \subset \sum_{m \geq 0} C_{m} \ot C_{n-m}.
	\end{align}
	\end{definition}

	\begin{lemma}
	\label{filtertje}
	For a filtered coalgebra $C$, and a $C$-comodule $V$, there exists a non-trivial subspace $V_{0}$ 	that is a $C_{0}$-comodule.
	\end{lemma}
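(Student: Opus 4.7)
The plan is to exhibit the desired $V_0$ as a simple subcomodule of $V$. By the fundamental theorem of comodules \cite{green}, every $C$-comodule is the directed union of its finite-dimensional subcomodules, so $V$ contains a nonzero finite-dimensional subcomodule $W$; passing to a nonzero subcomodule of $W$ of minimal dimension produces a simple subcomodule $V_0\subset V$. It therefore suffices to prove the general fact that any simple $C$-comodule is automatically a $C_0$-comodule.

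For this step I would use the coefficient coalgebra. The coaction $\rho\colon V_0 \to C\otimes V_0$ factors canonically through a smallest subcoalgebra $\operatorname{cf}(V_0)\subset C$: concretely, if $v_1,\ldots,v_n$ is a basis of $V_0$ and $\rho(v_i)=\sum_j c_{ij}\otimes v_j$, then $\operatorname{cf}(V_0)=\operatorname{span}\{c_{ij}\}$, with the matrix-coalgebra comultiplication $\Delta(c_{ij})=\sum_k c_{ik}\otimes c_{kj}$. A short dual-algebra argument—using that $C^*/\operatorname{Ann}(V_0)$ is a primitive ring acting faithfully on the finite-dimensional simple module $V_0$, hence isomorphic to $\operatorname{End}_k(V_0)$—shows that $\operatorname{cf}(V_0)$ is a simple subcoalgebra of $C$, and therefore is contained in the coradical $\operatorname{corad}(C)$.

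To conclude I would invoke Sweedler's theorem that the coradical of any coalgebra filtration sits in degree zero: the hypothesis $\Delta(C_n)\subset\sum_{m\ge 0}C_m\otimes C_{n-m}$ (with the standard convention $C_k=0$ for $k<0$) forces $\operatorname{corad}(C)\subset C_0$. Chaining the inclusions yields $\rho(V_0)\subset \operatorname{cf}(V_0)\otimes V_0\subset C_0\otimes V_0$, so $V_0$ is the required nonzero $C_0$-subcomodule. There is no serious obstacle beyond the two standard inputs from Sweedler's textbook (simple comodules have simple coefficient coalgebras, and coradicals are contained in $C_0$); everything else is the fundamental theorem of comodules and bookkeeping with the coaction.
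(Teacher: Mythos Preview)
Your proof is correct and takes a genuinely different route from the paper's. The paper argues directly from the filtration axiom: it picks a nonzero $v\in V$, writes $\delta(v)=\sum_{n,i} c_{n,i}\otimes v_{n,i}$ with the $c_{n,i}$ lifting bases of the successive quotients $C_n/C_{n-1}$, takes $N$ maximal with some $v_{N,i}\neq 0$, and sets $V_0=\operatorname{span}\{v_{N,i}\}$. Projecting the coassociativity identity to $C_N/C_{N-1}\otimes C\otimes V$ and using that $\Delta$ induces a map $C_N/C_{N-1}\to (C_N/C_{N-1})\otimes C_0$ forces $\delta(V_0)\subset C_0\otimes V_0$. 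By contrast you black-box two classical facts from Sweedler---that the coefficient coalgebra of a simple comodule is simple, and that the coradical of any coalgebra filtration sits in $C_0$---to land immediately on a simple subcomodule defined over $C_0$. Your argument is cleaner and yields the extra information that $V_0$ can be taken simple (which is harmless here, since the only application is Lemma~\ref{lemma:semi}, where $C_0=\Oscr(T)$ is cosemisimple anyway). The paper's argument is more self-contained and avoids importing the coradical machinery, at the cost of a slightly fiddly bookkeeping step with the filtration.
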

	\begin{proof}
	The coaction $\delta:V \to C \ot V$ of any element $v \in V$ can be decomposed in such a way as to 
	respect the filtration:
	$$
	\delta(v)=\sum_{n,i} c_{n,i} \ot v_{n,i},
	$$
	if we take the $(c_{n,i})_{n,i}$ to be preimages of the bases $(\bar{c}_{n})_{i}$ of $C_{n}/C_{n-1}$ for 	the natural quotient maps $C_{n} \to C_{n}/C_{n-1}$. Now define $V_{0}$ to be the span of $(v_{N,i})_	{i}$, with $N$ maximal among the $n$ for which there exists a non-zero $v_{n,i}$ in $\delta(v)$. Since 	$V$ is a comodule, we have
	$$
	\sum_{n,i} \Delta(c_{n,i}) \ot v_{n,i} = \sum_{n,i} c_{n,i} \ot \delta(v_{n,i}) \in C \ot C \ot V.
	$$
	Reducing to $C_{N}/C_{N-1} \ot C \ot V$, and noticing that because $C$ is filtered, the 		
	comultiplication descends to a map $\bar{\Delta}:C_{N}/C_{N-1} \to C_{N}/C_{N-1} \ot C_{0}$, the 
	above equality provides us with the inclusion
	$$
	\sum_{i} \bar{c}_{N,i} \ot \delta(v_{N,i}) \subset C_{N}/C_{N-1} \ot C_{0} \ot V_{0}.
	$$
	Since the $(\bar{c}_{N,i})_{i}$ form a basis, we have that $\delta(V_{0}) \subset C_{0} \ot V_{0}$.
	\end{proof}
	
	\begin{corollary}
	\label{corollary:filtered}
	All group like elements $g$ of a filtered coalgebra $C$ lie in $C_{0}$.
	\end{corollary}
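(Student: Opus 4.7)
The plan is to apply Lemma~\ref{filtertje} directly to the one-dimensional subcomodule $V := kg$ of the regular comodule $C$.

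First I would check that $V = kg$ is indeed a $C$-subcomodule: since $g$ is grouplike, $\Delta(g) = g \otimes g \in C \otimes kg$, so the coaction restricts. Next, by Lemma~\ref{filtertje}, $V$ contains a nonzero subspace $V_0$ that is a $C_0$-comodule; but $V$ is one-dimensional, so necessarily $V_0 = V = kg$. Hence the coaction on $g$ lands in $C_0 \otimes kg$, i.e.\ there exists $c \in C_0$ with
\[
\Delta(g) = c \otimes g.
\]

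Comparing this with $\Delta(g) = g \otimes g$ gives $g \otimes g = c \otimes g$. Applying $\id \otimes \epsilon$ to both sides and using $\epsilon(g) = 1$ yields $g = c \in C_0$.

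There is no real obstacle here; the only point that needs a moment of care is confirming that Lemma~\ref{filtertje} applies (which requires only that $V = kg$ be a genuine subcomodule, guaranteed by the grouplike identity), and that the subspace $V_0$ produced by the lemma must coincide with all of $kg$ since $kg$ is one-dimensional and $V_0$ is nontrivial.
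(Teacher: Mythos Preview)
Your proof is correct. It differs from the paper's argument in that you invoke Lemma~\ref{filtertje} as a black box applied to the one-dimensional comodule $kg$, whereas the paper argues directly from the filtration axiom~\eqref{filter}: if $g\in C_N\setminus C_{N-1}$, then $\Delta(g)=g\otimes g$ has nonzero image in $(C_N/C_{N-1})\otimes(C_N/C_{N-1})$, but for $N>0$ every summand $C_m\otimes C_{N-m}$ appearing in~\eqref{filter} projects to zero there, a contradiction. Your approach is the more ``corollary-style'' one---it genuinely uses the preceding lemma rather than reproving a special case of it---while the paper's is a two-line direct computation that bypasses the lemma entirely. Both are equally valid; the paper's is marginally more self-contained, yours makes clearer why the corollary sits where it does.
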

	\begin{proof}
	If $g \in C_{N} \backslash C_{N-1}$, then $0 \neq \bar{\Delta}(g)=g \ot g \in C_{N}/C_{N-1} \ot C_{N}/C_{N-1}$. This contradicts \eqref{filter}, unless $N=0$. 
\end{proof}

	\begin{definition}
	Given a group like element $g$ in $C$, a non-zero vector $v \in V$ is called a semi-invariant of 	
	weight $g$ if 
	$$
	\delta(v)=g \ot v.
	$$
	\end{definition}

	\begin{lemma}
	\label{lemma:grouplike}
	If $V$ is a subrepresentation of the regular representation $C$, then the semi-invariants are
	scalar multiples of the grouplike elements.
	\end{lemma}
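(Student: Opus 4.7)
The plan is very short: the result follows directly from one of the coalgebra counit axioms. Since $V$ is a subrepresentation of the regular representation $C$, the coaction on $V$ is simply the restriction of the comultiplication, $\delta = \Delta|_V$. So the hypothesis that $v$ is a semi-invariant of weight $g$ means precisely $\Delta(v) = g \otimes v$ in $C \otimes C$.

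I would then apply $\id \otimes \epsilon$ to both sides. On the left, the coalgebra axiom $(\id \otimes \epsilon)\Delta = \id$ gives $v$. On the right, $(\id \otimes \epsilon)(g \otimes v) = \epsilon(v) g$. Hence $v = \epsilon(v) g$. Because $v$ is by definition non-zero, $\epsilon(v)$ must be a non-zero scalar, and $v$ is a scalar multiple of $g$, as claimed.

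There is no real obstacle here; the only thing to be a little careful about is making sure the coaction on the subrepresentation $V \subset C$ is genuinely the restricted comultiplication (which is part of what it means to be a subcomodule of the regular comodule), and that $\epsilon(g) = 1$ plays no role — only the counit axiom applied to the second tensor factor is used.
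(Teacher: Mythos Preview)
Your proof is correct and essentially identical to the paper's: both observe that on a subcomodule of the regular comodule the coaction is $\Delta$, apply $\id\otimes\epsilon$ to $\Delta(v)=g\otimes v$, and use counitality to conclude $v=\epsilon(v)g$.
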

	\begin{proof}
	In this case $\delta=\Delta$, and by applying $1 \ot \epsilon$ to $\delta(v)=g \otimes v$ and using 	
	counitality on the left hand side, we find that $v=g\epsilon(v)$.
	\end{proof}	

\subsection{Borel coalgebras}
One has the following analogues of the (coordinate rings of) Borel subgroups:
\begin{equation}
\begin{aligned}
\Oscr_{\tt nc}(B)&=\Oscr_{\tt nc}({{\GL_2}})/(b) \cong k \langle c,d^{\pm 1} \rangle [a^{\pm 1}], \\
\Oscr_{\tt nc}(B^+)&=\Oscr_{\tt nc}({{\GL_2}})/(c) \cong k \langle a^{\pm 1},b \rangle [d^{\pm 1}]
\end{aligned}
\end{equation}
which are non-commutative quotient Hopf algebras of $\Oscr_{\tt nc}({{\GL_2}})$ with quotient maps~$\pi$ (respectively~$\pi^+$). Note that there is a commutative diagram
$$
\begin{tikzcd} 
\Oscr_{\tt nc}({{\GL_2}}) \arrow{r}{\psi} \arrow{d}{\pi} & \Oscr_{\tt nc}({{\GL_2}}) \arrow{d}{\pi^+} \\ 
\Oscr_{\tt nc}(B) \arrow{r}{\psi}[swap]{\cong} & \Oscr_{\tt nc}(B^+) 
\end{tikzcd}
$$
where~$\psi$ denotes the Hopf algebra automorphism
\begin{equation}
\label{psi}
	\psi:\Oscr_{\tt nc}({{\GL_2}}) \to \Oscr_{\tt nc}({{\GL_2}}):
	\begin{pmatrix}
	a & b \\ c & d
	\end{pmatrix}
	\mapsto
	\begin{pmatrix}
	d & c \\ b & a
	\end{pmatrix}.
	\end{equation}

	\begin{lemma}
	$\Oscr_{\tt nc}(B^+)$ is a pointed, filtered coalgebra. The filtration is defined as follows: if 
	$\Oscr_{\tt nc}(B^+)_{n}' \subset \Oscr_{\tt nc}(B^+)$ is the span of the monomials in $a, b$ and $d$ that contain 
	$n$ copies of $b$, then 
	$$
	\Oscr_{\tt nc}(B^+)_{n}=\oplus_{m \leq n} \Oscr_{\tt nc}(B^+)_{m}'.
	$$
	\end{lemma}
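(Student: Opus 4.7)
The plan is to carry out three quick steps: verify that the filtration is well-defined, check the coalgebra filtration axiom, and then deduce pointedness from the structure of the degree-zero piece.

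First, I would check that ``number of occurrences of $b$'' is a well-defined function on $\Oscr_{\tt nc}(B^+)$, so that $\Oscr_{\tt nc}(B^+)_n'$ and $\Oscr_{\tt nc}(B^+)_n$ are unambiguous. This follows from the fact that $\Oscr_{\tt nc}(B^+)$ has a basis of monomials in $a^{\pm 1}, b, d^{\pm 1}$, which one can read off either from the stated isomorphism $\Oscr_{\tt nc}(B^+)\cong k\langle a^{\pm 1},b\rangle[d^{\pm 1}]$ or by restricting the Bergman basis of Lemma~\ref{basis} to words not involving $c$. The relevant defining relations $ad=da$, $bd=db$, together with the invertibility of $a$ and $d$, all preserve the $b$-count, so counting $b$'s makes sense on the quotient.

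Next, for the coalgebra filtration axiom, I would compute coproducts on generators. Since $c=0$ in $\Oscr_{\tt nc}(B^+)$, one has
\[
\Delta(a^{\pm 1}) = a^{\pm 1}\otimes a^{\pm 1}, \qquad \Delta(d^{\pm 1}) = d^{\pm 1}\otimes d^{\pm 1}, \qquad \Delta(b) = a\otimes b + b\otimes d.
\]
In each pure tensor above, the total number of $b$-factors on the two slots equals the $b$-count of the original generator (either $0$ or $1$). Extending multiplicatively, for any monomial $w$ containing exactly $n$ copies of $b$ the coproduct $\Delta(w)$ is a sum of pure tensors $w_1\otimes w_2$ whose $b$-counts sum to $n$. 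Hence $\Delta(\Oscr_{\tt nc}(B^+)_n')\subset \sum_{k=0}^n \Oscr_{\tt nc}(B^+)_k'\otimes \Oscr_{\tt nc}(B^+)_{n-k}'$, and summing over $m\le n$ gives the required inclusion $\Delta(\Oscr_{\tt nc}(B^+)_n)\subset \sum_m \Oscr_{\tt nc}(B^+)_m\otimes \Oscr_{\tt nc}(B^+)_{n-m}$.

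Finally, for pointedness, I would exploit that $\Oscr_{\tt nc}(B^+)_0 = k[a^{\pm 1},d^{\pm 1}] = \Oscr(T)$ is a commutative Hopf algebra spanned by its grouplikes $a^id^j$, hence cosemisimple with only one-dimensional simple subcoalgebras; in particular it is pointed. I then invoke the standard fact that if a coalgebra $C$ admits a coalgebra filtration $\{C_n\}$ whose zeroth piece $C_0$ is a pointed subcoalgebra, then the coradical of $C$ is contained in $C_0$, so that $C$ is itself pointed. The only mild obstacle anywhere in the proof is the first step: one genuinely needs a basis (supplied by Lemma~\ref{basis}) to make sense of the filtration, after which everything is either a routine computation on generators or a standard consequence of the coalgebra filtration together with the pointedness of $\Oscr(T)$.
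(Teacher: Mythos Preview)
Your proof is correct and, for the filtration axiom, essentially identical to the paper's: both compute $\Delta$ on the generators $a^{\pm1},b,d^{\pm1}$ and observe that the $b$-count is preserved, the paper phrasing this as saying $\Oscr_{\tt nc}(B^+)$ is in fact $\NN$-graded by $|a|=|d|=0$, $|b|=1$.

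The one genuine difference is in the argument for pointedness. The paper simply notes that $\Oscr_{\tt nc}(B^+)$ is generated as an algebra by grouplike elements ($a^{\pm1},d^{\pm1}$) and the $(a,d)$-skew-primitive element $b$, and invokes the standard fact that such a coalgebra is pointed. You instead first establish the filtration and then use the (equally standard) fact that for any coalgebra filtration the coradical lies in the zeroth piece, reducing pointedness to that of $\Oscr(T)$. Both routes are short and valid; the paper's is marginally more direct since it does not depend on having set up the filtration first, while yours has the small advantage of explaining exactly where the simple subcoalgebras sit.
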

	\begin{proof}
	First notice that $\Oscr_{\tt nc}(B^+)$ is generated by grouplike and skew-primitive elements, so it is 
	pointed. Also, $\Oscr_{\tt nc}(B^+)_{0}'$ is spanned by the products $a^{s}d^{t}$. We have
	$$
	\Delta(b)= a \ot b + b \ot d,
	$$
        Since $\Delta$ is homogeneous for the grading $|a|=|d|=0$,
        $|b|=1$ it follows that $\Oscr_{\tt nc}(B^+)=\oplus_{n \geq
          0}\Oscr_{\tt nc}(B^+)_{n}'$ is an $\mathbb{N}$-graded
        coalgebra, so $\Oscr_{\tt nc}(B^+)$ becomes filtered by
        setting
	$$
	\Oscr_{\tt nc}(B^+)_{n} = \oplus_{m \leq n} \Oscr_{\tt nc}(B^+)_{m}'.
	$$
	\end{proof}
	
	\begin{corollary}
	The group like elements in $\Oscr_{\tt nc}(B^+)$ are all of the form $a^id^j$ for $i,j\in \ZZ$.
	\end{corollary}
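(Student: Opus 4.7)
The plan is to combine Corollary \ref{corollary:filtered} with the explicit description of the bottom filtration piece $\Oscr_{\tt nc}(B^+)_0 = \Oscr_{\tt nc}(B^+)_0'$, which is spanned by the monomials in $a^{\pm 1}, d^{\pm 1}$ containing no $b$.

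First, I would invoke Corollary \ref{corollary:filtered} applied to the filtration constructed in the preceding lemma: every group-like element of $\Oscr_{\tt nc}(B^+)$ lies in $\Oscr_{\tt nc}(B^+)_0$. Since $\Oscr_{\tt nc}(B^+)_0 = \Oscr_{\tt nc}(B^+)_0'$ equals the span of $a^s d^t$ with $s,t\in\ZZ$ (recall $d$ is central in $\Oscr_{\tt nc}(B^+)=k\langle a^{\pm 1},b\rangle[d^{\pm 1}]$, so these monomials form a commutative Laurent polynomial subalgebra isomorphic to $\Oscr(T)=k[a^{\pm 1},d^{\pm 1}]$), it suffices to identify the group-likes inside this subspace.

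Next, I would observe that $\Oscr_{\tt nc}(B^+)_0$ is actually a subcoalgebra. Indeed, modulo the ideal $(c)$ one has $\Delta(a)=a\otimes a$ and $\Delta(d)=d\otimes d$ (the terms $b\otimes c$ and $c\otimes b$ from $\Delta(a)$, $\Delta(d)$ in $\Oscr_{\tt nc}({\GL_2})$ vanish), so $\Delta$ restricts to the standard coproduct of $\Oscr(T)$ on this subspace. Hence the set of group-like elements of $\Oscr_{\tt nc}(B^+)$ coincides with the set of group-like elements of $\Oscr(T)$, which by the standard identification is precisely the character lattice $X(T)=\{a^i d^j\mid i,j\in\ZZ\}$ (distinct group-likes are linearly independent, and any group-like Laurent polynomial must reduce to a single monomial).

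There is no real obstacle here: the statement is essentially a one-line consequence of Corollary \ref{corollary:filtered} together with the computation of $\Delta$ on $a$ and $d$ in the quotient. The only point requiring minor care is verifying that $\Oscr_{\tt nc}(B^+)_0$ is closed under $\Delta$ (so that ``group-like in $\Oscr_{\tt nc}(B^+)$'' and ``group-like in $\Oscr(T)$'' agree), which follows immediately from the two coproduct formulas above.
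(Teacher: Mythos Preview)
Your proposal is correct and follows essentially the same route as the paper: invoke Corollary~\ref{corollary:filtered} to land in $\Oscr_{\tt nc}(B^+)_0=k[a^{\pm 1},d^{\pm 1}]=\Oscr(T)$, then read off the group-likes there. Your extra step---checking $\Delta(a)=a\otimes a$ and $\Delta(d)=d\otimes d$ modulo $(c)$ so that $\Oscr_{\tt nc}(B^+)_0$ is a subcoalgebra and the two notions of ``group-like'' agree---is a harmless elaboration that the paper leaves implicit.
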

	\begin{proof}
	By Corollary~\ref{corollary:filtered}  all group like elements in $\Oscr_{\tt nc}(B^+)$ are contained in
	$\Oscr_{\tt nc}(B^+)_{0}=k[a^{\pm 1},d^{\pm 1}]$, which is exactly $\Oscr(T)$. It now suffices
to note that the grouplike elements in $\Oscr(T)$ have the indicated form.
	\end{proof}

The following lemma is a noncommutative version of the Lie-Kolchin theorem.

	\begin{lemma}
	\label{lemma:semi}
	Every $\Oscr_{\tt nc}(B^+)$-representation contains a semi-invariant.
	\end{lemma}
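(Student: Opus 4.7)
The plan is to apply Lemma \ref{filtertje} directly to the filtered coalgebra $\Oscr_{\tt nc}(B^+)$. Its degree-zero piece is
\[
\Oscr_{\tt nc}(B^+)_0 = k[a^{\pm 1}, d^{\pm 1}] = \Oscr(T),
\]
which is the commutative coordinate ring of the two-dimensional torus. So for any $\Oscr_{\tt nc}(B^+)$-representation $V$, Lemma \ref{filtertje} furnishes a nonzero subspace $V_0 \subset V$ whose coaction (restricted from $V$) factors as $\delta|_{V_0}: V_0 \to \Oscr(T) \otimes V_0$. In particular $V_0$ is a finite-dimensional $\Oscr(T)$-comodule.

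Next, since $\Oscr(T)$ is the coordinate ring of a torus, every finite-dimensional $\Oscr(T)$-comodule is a direct sum of one-dimensional weight spaces $k_t$ indexed by the grouplike monomials $t = a^i d^j \in X(T)$. (This is the standard fact that finite dimensional representations of a torus decompose into characters, equivalently, the finite-dimensional comodules over the group algebra of the character lattice are semisimple with one-dimensional simples.) Pick any nonzero vector $v$ in one such summand $k_t \subset V_0$.

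By construction the full $\Oscr_{\tt nc}(B^+)$-coaction on $v$ coincides with its $\Oscr(T)$-coaction and equals $\delta(v) = t \otimes v$, where $t = a^i d^j$ is grouplike in $\Oscr_{\tt nc}(B^+)$ (by the previous corollary, or just directly). Hence $v$ is a semi-invariant of weight $t$, and the lemma follows.

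There is really no serious obstacle here: the argument is just the non-commutative analogue of the standard reduction of Lie-Kolchin to weight-space decomposition on a maximal torus, and the two nontrivial ingredients (that the coinvariants along the filtration are an $\Oscr(T)$-comodule, and that torus comodules split into characters) are already in hand.
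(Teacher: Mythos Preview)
Your proof is correct and follows essentially the same approach as the paper: apply Lemma~\ref{filtertje} to obtain a nonzero subspace $V_0$ on which the coaction factors through $\Oscr_{\tt nc}(B^+)_0=\Oscr(T)$, then use that torus comodules decompose into one-dimensional weight spaces. The paper's proof is just a terser version of what you wrote.
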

	\begin{proof}
          From Lemma~\ref{filtertje} we know that every $\Oscr_{\tt
            nc}(B^+)$-comodule $V$ contains a $\Oscr_ {\tt
            nc}(B^+)_{0}$-comodule $V_{0}$. Since $\Oscr_{\tt
            nc}(B^+)_{0}=\Oscr(T)$ is the coordinate ring of a torus,
          $V_{0}$ is spanned by semi-invariants.
	\end{proof}
	\begin{proposition}
	\label{nab}
	Every subrepresentation of $\nabla_I(\ll)$ contains $\ll$, viewed as highest weight vector in 
	$\nabla_I(\ll)$.
	\end{proposition}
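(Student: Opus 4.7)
The plan is to extract a $\Oscr_{\tt nc}(B^+)$-semi-invariant from any nonzero subrepresentation $W\subset\nabla_I(\lambda)$ and use the quotient $\pi^+\colon\Oscr_{\tt nc}(\GL_2)\to\Oscr_{\tt nc}(B^+)$ to force it to be a scalar multiple of $\lambda$. First I would restrict the coaction on $W$ along $\pi^+$ so as to view $W$ as an $\Oscr_{\tt nc}(B^+)$-subcomodule. By Lemma~\ref{lemma:semi} this produces a nonzero semi-invariant $v\in W$; its weight $g$ is a grouplike of $\Oscr_{\tt nc}(B^+)$, hence of the form $a^id^j$ with $i,j\in\ZZ$ by the preceding corollary. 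Postcomposing with $\pi^+$ on the second tensor factor of $(\pi^+\otimes\id)\Delta(v)=a^id^j\otimes v$, and using that $\pi^+$ is a coalgebra map, yields $\Delta_{B^+}(\pi^+(v))=a^id^j\otimes\pi^+(v)$. Thus $\pi^+(v)$ is a semi-invariant of the regular $\Oscr_{\tt nc}(B^+)$-comodule, and Lemma~\ref{lemma:grouplike} then forces $\pi^+(v)=\alpha\cdot a^id^j$ for some $\alpha\in k$.

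Next I would analyse $\pi^+|_{\nabla_I(\lambda)}$ directly. In the presentation $\Oscr_{\tt nc}(B^+)\cong k\langle a^{\pm1},b\rangle[d^{\pm1}]$ one has $\delta=ad$ and $d$ is central, so a spanning vector $e_{(y'_1,\ldots,y'_n)}:=\delta^{x_1}b^{y'_1}d^{y''_1}\cdots\delta^{x_n}b^{y'_n}d^{y''_n}$ of $\nabla_I(\lambda)$ maps (after pushing all $d$'s to the right) to the monomial $a^{x_1}b^{y'_1}a^{x_2}b^{y'_2}\cdots a^{x_n}b^{y'_n}\cdot d^{\sum_k(x_k+y''_k)}$. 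Distinct tuples $(y'_1,\ldots,y'_n)$ produce distinct standard basis monomials of $k\langle a^{\pm1},b\rangle[d^{\pm1}]$, so $\pi^+$ is injective on $\nabla_I(\lambda)$; moreover, the only such monomial that is a Laurent monomial purely in $a,d$ (contains no $b$) is the one with all $y'_k=0$, namely $\pi^+(\lambda)=a^{\sum_kx_k}d^{\sum_k(x_k+y_k)}=\mathtt{wt}(\lambda)$.

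Combining these, since $\pi^+(v)=\alpha\cdot a^id^j$ lies in $\pi^+(\nabla_I(\lambda))$ and is a pure $a,d$-monomial, the uniqueness observation forces $a^id^j=\mathtt{wt}(\lambda)$ and shows that only the coefficient of $\lambda$ in the expansion of $v$ can be nonzero, so $v=\alpha\lambda$. Injectivity of $\pi^+|_{\nabla_I(\lambda)}$ together with $v\neq 0$ ensures $\alpha\neq 0$, whence $\lambda=\alpha^{-1}v\in W$.

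The main obstacle lies in verifying the injectivity of $\pi^+|_{\nabla_I(\lambda)}$ and identifying the pure $a,d$-monomials in its image; these depend on the concrete presentation of $\Oscr_{\tt nc}(B^+)$ and replace the straightforward weight-space argument that would suffice in the commutative setting, where any $B^+$-semi-invariant is automatically of highest weight.
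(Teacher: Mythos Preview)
Your proof is correct and follows essentially the same approach as the paper: both establish that $\pi^+$ restricts to an injection $\nabla_I(\lambda)\hookrightarrow\Oscr_{\tt nc}(B^+)$ via the explicit computation on basis vectors, then combine Lemma~\ref{lemma:semi} with Lemma~\ref{lemma:grouplike} to identify the unique $\Oscr_{\tt nc}(B^+)$-semi-invariant as (a scalar multiple of) $\lambda$. The only difference is organizational---the paper first embeds $\nabla_I(\lambda)$ into the regular $\Oscr_{\tt nc}(B^+)$-comodule and works there, whereas you first extract the semi-invariant in $W$ and then push it forward via $\pi^+$---but the substance is identical.
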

	\begin{proof}
	The composition
	$$
	\nabla_I(\ll) \hookrightarrow \Oscr_{\tt nc}({{\GL_2}}) \xrightarrow{\pi} \Oscr_{\tt nc}(B^+)
	$$
	sends a basis vector, say
	\begin{align}
	\label{bbb}
	\delta^{x_1}b^{y'_1}d^{y''_1} \cdots \delta^{x_n} b^{y'_n}d^{y''_n},
	\end{align}
	where $y'_{i}+y''_{i}=y_{i}$, to
	\begin{align}
	\label{bb}
	a^{x_1}b^{y'_1} \cdots a^{x_n}b^{y_n'}d^{\sum (x_i + y''_i)}
	\end{align}
	Hence the map is injective, and we can view $\nabla_I(\ll)$ as subrepresentation of $\Oscr_{\tt nc}(B^+)$. 
	From Lemma~\ref{lemma:grouplike}, it follows that $\nabla_I(\ll)$ contains at most one semi-invariant of 
	a 
	fixed weight $a^{u}d^{l}$ (up to scalar multiple), and in that case this element is exactly $a^{u}d^{l}$, if 
	it sits in $\nabla_I(\ll) \hookrightarrow \Oscr_{\tt nc}(B^+)$. From \eqref{bb}, we see immediately 
	that only one weight, and thus only one semi-invariant can and does occur, namely
	$$
	a^{\sum x_{i}} d^{\sum (x_{i}+y_i)},
	$$ 
	which corresponds to $t={\tt wt}(\ll)$. Converting to $\nabla_I(\ll)$ as a subcomodule of $\Oscr_{\tt nc}({{\GL_2}})$ 
	again, we see that $\ll$ is the unique $\Oscr_{\tt nc}(B^+)$-semi-invariant in $\nabla_I(\ll)$. Suppose now that $V$ is 
	a subrepresentation of $\nabla_I(\ll)$; by Lemma~\ref{lemma:semi}, we know that $V$ contains a 
	$\Oscr_{\tt nc}(B^+)$-semi-invariant $v$ of weight $t$. Then by the previous considerations, $v$ must be a multiple of $\ll$, proving the proposition.
	\end{proof}

\subsection{Applications}
	\begin{proposition}
	\label{llll}
	The comodules $\nabla_I(\lambda)$ are Schurian and 
	the subrepresentation $L(\ll)$ of $\nabla_I(\ll)$ cogenerated by $\ll$ is simple. 
	\end{proposition}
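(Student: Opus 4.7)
The plan is to deduce both claims as direct consequences of Proposition~\ref{nab} (every nonzero subcomodule of $\nabla_I(\lambda)$ contains $\lambda$), combined with the multiplicity-one highest weight statement of Lemma~\ref{lem:bijection}.

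I first handle the simplicity of $L(\lambda)$. Let $U\subseteq L(\lambda)$ be a nonzero subcomodule. Then $U$ is in particular a nonzero subcomodule of $\nabla_I(\lambda)$, so Proposition~\ref{nab} gives $\lambda\in U$. Since $L(\lambda)$ is by definition the minimal subcomodule containing $\lambda$, we conclude $L(\lambda)\subseteq U$, hence $U=L(\lambda)$. Thus every nonzero subcomodule of $L(\lambda)$ equals $L(\lambda)$, i.e., $L(\lambda)$ is simple.

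For the Schurian property, let $f\in \End(\nabla_I(\lambda))$. Composing the coaction with the Hopf algebra quotient $\Oscr_{\tt nc}({{\GL_2}})\twoheadrightarrow\Oscr(T)$ makes $\nabla_I(\lambda)$ into an $\Oscr(T)$-comodule, and $f$ is automatically a morphism for this structure; hence $f$ preserves the weight-space decomposition. By Lemma~\ref{lem:bijection} the weight ${\tt wt}(\lambda)$ occurs with multiplicity one in $\nabla_I(\lambda)$, spanned by $\lambda$ itself, so $f(\lambda)=c\lambda$ for some scalar $c\in k$. Setting $g=f-c\cdot\id$, we have $g(\lambda)=0$ while $g$ remains $T$-equivariant, so $g$ annihilates the entire ${\tt wt}(\lambda)$-weight space and consequently $\im(g)$ has trivial ${\tt wt}(\lambda)$-component. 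If $\im(g)\neq 0$, then $\im(g)$ would be a nonzero subcomodule of $\nabla_I(\lambda)$ and would therefore contain $\lambda$ by Proposition~\ref{nab}, contradicting the absence of weight-${\tt wt}(\lambda)$ vectors in $\im(g)$. Hence $g=0$ and $f$ is a scalar.

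There is no real obstacle here: all the substantive work is carried by Proposition~\ref{nab} (proved via the pointed filtered structure on $\Oscr_{\tt nc}(B^+)$ and the uniqueness of group-like semi-invariants in the regular comodule) and by the explicit multiplicity-one count in Lemma~\ref{lem:bijection}. The only small points to check are that ``cogenerated by $\lambda$'' really means the smallest subcomodule containing $\lambda$, so that the minimality step in the proof of simplicity applies, and that restriction along $\Oscr_{\tt nc}({{\GL_2}})\twoheadrightarrow\Oscr(T)$ legitimately imports torus weight-space reasoning into the Schurian argument.
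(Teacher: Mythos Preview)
Your proof is correct and follows essentially the same approach as the paper: both arguments deduce everything from Proposition~\ref{nab} together with the multiplicity-one statement for the highest weight. The only cosmetic difference is in the Schurian part: the paper first argues that a nonzero endomorphism must be an automorphism (via $\ker f\neq 0\Rightarrow \lambda\in\ker f\Rightarrow \lambda\notin\im f$), and then concludes that $f-c$, having nonzero kernel, is zero; you instead skip the dichotomy and argue directly that $\im(f-c)$ misses the ${\tt wt}(\lambda)$-weight space and is therefore zero. These are the same argument, reorganized.
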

	\begin{proof}
By Proposition~\ref{nab} any subcomodule of $L(\lambda)$ must contain $\lambda$.
So it must be equal to $L(\lambda)$.

The proof that $\nabla_I(\lambda)$
is Schurian is similar.
          A non-zero endomorphism $f$ of $\nabla_I(\lambda)$ gives
          rise to an endomorphism of $\Oscr_{\tt nc}(B^+)$-comodules,
          which will also be denoted~$f$. If the kernel of $f$ is
          non-zero then it contains $\lambda$ by
          Proposition~\ref{nab}. But then the image of $f$ cannot
          contain $\lambda$ which contradicts Proposition~\ref{nab} unless the image
is zero. So $f$ must either be zero or an automorphism.

Since $\lambda$ is the unique 
	semi-invariant of weight ${\tt wt}(\lambda)$ in $\nabla_I(\lambda)$ up to scalar multiplication, one has 
	$f(\lambda)=c \lambda$. Since $f-c$ is not an automorphism we deduce $f=c$, finishing the proof.
	\end{proof}

\begin{proposition}
\label{prop:Ll}
We have $L(\lambda)^\ast\cong L(\lambda^\ast)$ and moreover both are equal to the image 
of the composition
\begin{equation}
\label{eq:comp}
\nabla_I(\lambda)^* \hookrightarrow M(\lambda)^* \xrightarrow[\eqref{eq:Miso}]{\cong} M(\lambda^*) \twoheadrightarrow \nabla_I(\lambda^*)
\end{equation}
\end{proposition}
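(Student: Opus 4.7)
The plan is to write $\phi$ for the composition in~\eqref{eq:comp} and study its image $I := \im(\phi) \subset \nabla_I(\lambda^*)$, showing directly that $I$ is simple and isomorphic to both $L(\lambda)^*$ and $L(\lambda^*)$.

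The first step invokes Lemma~\ref{lem:bijection}: each of the three maps in~\eqref{eq:comp} restricts to a bijection on the one-dimensional highest weight space (of weight ${\tt wt}(\lambda^*)$), so $\phi$ sends the highest weight vector $v_+$ of $\Delta_I(\lambda^*) = \nabla_I(\lambda)^*$ to a nonzero multiple of $\lambda^*$. In particular $\phi \neq 0$ and $\lambda^* \in I$. Next I identify the socle and head of $I$. Since $I$ is a nonzero subcomodule of $\nabla_I(\lambda^*)$ containing $\lambda^*$, Proposition~\ref{nab} together with Proposition~\ref{llll} (which exhibit $L(\lambda^*)$ as the unique simple subcomodule of $\nabla_I(\lambda^*)$) give $\socle(I) = L(\lambda^*)$. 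Dually, because $L(\lambda) \hookrightarrow \nabla_I(\lambda)$ is the unique simple subcomodule, the dual map $\Delta_I(\lambda^*) \twoheadrightarrow L(\lambda)^*$ is the unique simple quotient of $\Delta_I(\lambda^*)$; hence any nonzero quotient of $\Delta_I(\lambda^*)$ has $L(\lambda)^*$ as its head, so $\topp(I) = L(\lambda)^*$.

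The crucial step is to verify that the composition $L(\lambda^*) \hookrightarrow I \twoheadrightarrow L(\lambda)^*$ is nonzero. Under the identification $\Delta_I(\lambda^*) = \nabla_I(\lambda)^*$, the vector $v_+$ is the functional dual to the basis vector $\lambda \in \nabla_I(\lambda)$, and its image in $L(\lambda)^*$ under the canonical head projection is nonzero (being the functional ``restriction to $L(\lambda) \ni \lambda$''). Since both the canonical quotient $\Delta_I(\lambda^*) \twoheadrightarrow L(\lambda)^*$ and the composition $\Delta_I(\lambda^*) \xrightarrow{\phi} I \twoheadrightarrow L(\lambda)^*$ are nonzero morphisms to the simple $L(\lambda)^*$, they agree up to a scalar. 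It follows that the image of $\lambda^* \in I$ in $L(\lambda)^*$ under the head projection is nonzero. Since $\lambda^*$ lies in $L(\lambda^*) = \socle(I)$, the composition $L(\lambda^*) \to L(\lambda)^*$ is nonzero, and because both are simple it is an isomorphism, giving $L(\lambda)^* \cong L(\lambda^*)$.

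Finally, this isomorphism splits the short exact sequence $0 \to \rad(I) \to I \to L(\lambda)^* \to 0$ via $L(\lambda^*) \hookrightarrow I$, yielding a decomposition $I \cong \rad(I) \oplus L(\lambda^*)$ as comodules. Simplicity of $\socle(I) = L(\lambda^*)$ forces $\rad(I) = 0$ (otherwise any nonzero $\rad(I)$ would contribute an independent simple subcomodule to the socle), so $I = L(\lambda^*) \cong L(\lambda)^*$, completing the proof. The main obstacle is the compatibility check in the third paragraph, which requires the explicit identification of $v_+$ with the functional dual to $\lambda$ together with the uniqueness (up to scalar) of the morphism $\Delta_I(\lambda^*) \twoheadrightarrow L(\lambda)^*$, so that the two factorizations of the canonical head projection coincide.
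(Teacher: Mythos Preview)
Your argument is correct and follows essentially the same route as the paper's: both identify the socle and head of the image $Z$ (your $I$) as $L(\lambda^*)$ and $L(\lambda)^*$ respectively, show the composition $L(\lambda^*)\hookrightarrow Z\twoheadrightarrow L(\lambda)^*$ is nonzero by tracking the highest weight ${\tt wt}(\lambda^*)$, and then conclude via the resulting splitting together with the fact that $\nabla_I(\lambda^*)$ has simple socle. The only cosmetic difference is that the paper packages the weight argument through the kernel/cokernel diagram with $K$ and $C$, whereas you invoke Proposition~\ref{nab} directly for the socle and trace the explicit vector $v_+=\lambda^\vee$ through the maps; both yield the same conclusion.
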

\begin{proof}
\eqref{eq:comp} is  a bijection on the highest and lowest weight vectors by Lemma \ref{lem:bijection}. Taking kernels and cokernels of the composed morphism, there is an exact sequence
\begin{equation}
\label{exactsequence}
0 \to K \to \nabla_I(\lambda)^* \to \nabla_I(\lambda^*) \to C \to 0.
\end{equation}
which may be completed to a diagram:
\[
\begin{tikzcd}
	& & && L(\lambda^*) \dlar[dashed,hookrightarrow]\drar[dashed]{0} \dar[hookrightarrow] & &\\
	0 \rar & K \drar[dashed][swap]{0} \rar & \nabla_I(\lambda)^* \rar[twoheadrightarrow] \dar[twoheadrightarrow] 
&Z\dlar[dashed,twoheadrightarrow]
\rar[hookrightarrow]& \nabla_I(\lambda^*) \rar & C \rar & 0 \\
	& & L(\lambda)^* && & & 
\end{tikzcd}
\]
where the outher dashed arrows are $0$ because $K$ and $C$ have weights
strictly between ${\tt wt}(\lambda^*)$ and $\sigma ({\tt wt}
(\lambda^*))$. If the resulting composition 
\begin{equation}
\label{split}
L(\lambda^\ast)\hookrightarrow Z\twoheadrightarrow L(\lambda)^\ast
\end{equation}
is zero then the weight $\mathsf{wt}(\lambda^\ast)$ occurs twice in $Z$ and hence in $\nabla_I(\lambda^\ast)$
which is impossible by Lemma \ref{lem:bijection}. Hence we conclude that $L(\lambda^\ast)=L(\lambda)^\ast$
and the inclusion $L(\lambda^\ast)\hookrightarrow Z$ is split. If it not an isomorphism then
$ \nabla_I(\lambda^*)$ contains a decomposable submodule which is impossible by Proposition \ref{nab}.
\end{proof}
\subsection{A canonical filtration on $\Oscr_{\tt nc}({{\GL_2}})$}
\label{sec:filtration}
 Write~$\Oscr_{\tt nc}({{\GL_2}})$ as an ascending union of finite dimensional subcoalgebras:
\begin{equation}
\label{subcoalgebra}
\Oscr_{\tt nc}({{\GL_2}})= \cup_{n \geq 0} \Oscr_n,
\end{equation}
where $\Oscr_n$ is the subcoalgebra consisting of all elements that
can be written as linear combinations of words of length $\leq n$ in
the generators $a,b,c,d$ and $\delta$, $\delta^{-1}$ (thus each generator has
length $1$). 
Let $I_n$ be the set of words in $c,d,\delta,\delta^{-1}$ of
length $n$ not containing $d\delta^{-1}c$, $\delta\delta^{-1}$, $\delta^{-1}\delta$ and let $t:I_n\rightarrow \Lambda$ be the map which replaces $c$ by $d$.  
\begin{lemma} 
\label{lemma:filtration}
One has as left $\Oscr_{\tt nc}({{\GL_2}})$-comodules
\[
\Oscr_n/\Oscr_{n-1}\cong \bigoplus_{\gamma\in I_n} \nabla_I(t(\gamma))
\]
\end{lemma}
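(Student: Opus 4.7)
The plan is to read the direct-sum decomposition off the Bergman basis of Lemma~\ref{basis}. Since $\Oscr_n$ is spanned by the Bergman normal forms of formal length at most $n$, the quotient $\Oscr_n/\Oscr_{n-1}$ has a basis given by the normal forms of formal length exactly $n$. To each such normal form $\beta=\delta^{x_1}w_1\delta^{x_2}w_2\cdots$ I associate a pair $(\gamma_\beta,\rho_\beta)$: the \emph{column word} $\gamma_\beta$ is obtained by keeping the $\delta^{x_i}$-blocks intact and replacing each letter of each $w_i$ by $c$ or $d$ according to whether it lies in column $1$ (an $a$ or $c$) or column $2$ (a $b$ or $d$); the \emph{row pattern} $\rho_\beta$ records, position by position, whether the letter lies in row $1$ or row $2$. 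The Bergman conditions of Lemma~\ref{basis} translate exactly into the two requirements that $\gamma_\beta\in I_n$ (the forbidden $\delta\delta^{-1}$, $\delta^{-1}\delta$, $d\delta^{-1}c$ in $\gamma$ correspond precisely to the relations that reduce length or that are eliminated by the column-straightening rewrites) and that $\rho_\beta$ is non-decreasing inside each $w_i$. Hence $\beta\mapsto(\gamma_\beta,\rho_\beta)$ is a bijection between the Bergman basis of $\Oscr_n/\Oscr_{n-1}$ and the set of admissible pairs.

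Fix $\gamma\in I_n$ and write $t(\gamma)=\delta^{x_1}d^{y_1}\cdots\delta^{x_m}d^{y_m}$. The basis of $\nabla_I(t(\gamma))$ from Definition~\ref{definition:nabla} is indexed by tuples $(y'_i,y''_i)$ with $y'_i+y''_i=y_i$, and these correspond exactly to the non-decreasing row patterns on the letter positions of $\gamma$ (first $y'_i$ row-$1$ slots, then $y''_i$ row-$2$ slots in the $i$-th block). I define $\phi_\gamma:\nabla_I(t(\gamma))\to\Oscr_n/\Oscr_{n-1}$ on basis vectors by sending $\delta^{x_1}b^{y'_1}d^{y''_1}\cdots$ to the class of the Bergman basis element with column word $\gamma$ and row pattern given by the $(y'_i,y''_i)$-splits. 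By the previous paragraph, the $\phi_\gamma$ are injective with pairwise disjoint images and $\Oscr_n/\Oscr_{n-1}=\bigoplus_{\gamma\in I_n}\operatorname{Im}(\phi_\gamma)$ as vector spaces.

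It remains to verify that each $\phi_\gamma$ is $\Oscr_{\tt nc}(\GL_2)$-colinear. Because $\Delta(M_{ij})=M_{i1}\otimes M_{1j}+M_{i2}\otimes M_{2j}$ and $\Delta(\delta^{\pm1})=\delta^{\pm1}\otimes\delta^{\pm1}$, applying $\Delta$ to a monomial $\alpha=\delta^{x_1}M_{i_1 j_1}\cdots$ gives $\sum_{k_\bullet}(\delta^{x_1}M_{i_1 k_1}\cdots)\otimes(\delta^{x_1}M_{k_1 j_1}\cdots)$ in which the right tensor factor has columns $(j_\bullet)$ equal to those of $\alpha$ and rows $(k_\bullet)$ varying; in particular, the left-tensor coefficients depend only on the row pattern of $\alpha$, not on its columns. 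Reducing the right tensors to Bergman normal form modulo $\Oscr_{n-1}$ uses only column-word-preserving rewrites once the starting column word lies in $I_n$: the column-altering relations $b\delta^{-1}c=a\delta^{-1}d-1$, $d\delta^{-1}c=c\delta^{-1}d$, $b\delta^{-1}a=a\delta^{-1}b$, $d\delta^{-1}a=c\delta^{-1}b+1$ are only triggered by the subwords that $I_n$ forbids, while the length-lowering contributions from $ad-cb=\delta$, $da-bc=\delta$, $a\delta^{-1}d-b\delta^{-1}c=1$, $d\delta^{-1}a-c\delta^{-1}b=1$ and $\delta\delta^{\pm1}=1$ vanish in $\Oscr_n/\Oscr_{n-1}$. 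Consequently $\operatorname{Im}(\phi_\gamma)$ is a left subcomodule, and the row-only dependence of the left tensors shows that its coaction matches that on $\nabla_I(t(\gamma))$ under the basis bijection $\phi_\gamma$. The main technical obstacle is precisely this column-word-preservation bookkeeping — checking case by case that every Bergman relation either preserves the column word, is triggered only by patterns outside $I_n$, or strictly lowers the formal length.
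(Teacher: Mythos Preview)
Your proposal is correct and follows essentially the same approach as the paper. The paper's proof defines precisely your ``column word'' map (called $s:J_n\to I_n$, sending $a\mapsto c$, $b\mapsto d$), sets $\widehat{\nabla}_I(\gamma)$ to be the span of Bergman basis words with $s(w)=\gamma$, and then simply asserts that this is a subcomodule isomorphic to $\nabla_I(t(\gamma))$; your argument supplies exactly the details the paper hides under ``it is easy to see'', namely the row/column decoupling in $\Delta(M_{ij})$ and the case-check that the Bergman rewrites either preserve the column word, drop length, or are triggered only by the $d\delta^{-1}c$ pattern excluded from $I_n$.
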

\begin{proof}
Let $J_n$ be the set of words in $a,b,c,d,\delta,\delta^{-1}$ of length
$n$ as introduced in Lemma \ref{basis}. It is clear that $J_n$
yields a basis for $\Oscr_n/\Oscr_{n-1}$.

Let $s:J_n\rightarrow I_n$ be the map which replaces $a$ by $c$ and $b$ by $d$. For 
$\gamma\in I_n$ define
\[
\widehat{\nabla}_I(\gamma)=\bigoplus_{w\in J_n,s(w)=\gamma} k\bar{w}\subset \Oscr_n/\Oscr_{n-1}
\]
It easy to see that $\widehat{\nabla}_I(\gamma)$ is a subcomodule
of $\Oscr_n/\Oscr_{n-1}$ and furthermore
\[
\widehat{\nabla}_I(\gamma)\cong \nabla_I(t(\gamma))
\]
This finishes the proof.
\end{proof}
\subsection{Induced representations}
\label{sec:induced}
Now we  look at the induced representations  
	\begin{equation}
	\induced_B^{{\GL_2}}(t):=\Oscr_{\tt nc}({{\GL_2}}) \boxtimes^{\Oscr_{\tt nc}(B)} k_t,
	\end{equation}
where $\boxtimes$ denotes the cotensorproduct. Also, $\Oscr_{\tt nc}({{\GL_2}})$ is regarded as a right~$\Oscr_{\tt nc}(B)$-comodule by comultiplying and composing with the quotient map:
$$
\Oscr_{\tt nc}({{\GL_2}}) \xrightarrow{\Delta} \Oscr_{\tt nc}({{\GL_2}}) \ot \Oscr_{\tt nc}({{\GL_2}}) \xrightarrow{1 \otimes \pi}  
\Oscr_{\tt nc}({{\GL_2}}) \ot \Oscr_{\tt nc}(B).
$$
This composition is denoted $\delta_1$. The $\Oscr(T)$-comodule $k_t$ can be regarded as left $\Oscr_{\tt nc}(B)$-comodule in the obvious way, with corresponding comodule map $\delta_2$. Remember that the cotensor product is defined by 
$$
\Oscr_{\tt nc}({{\GL_2}}) \boxtimes^{\Oscr_{\tt nc}(B)} k_t:=\tn{Ker}(\Oscr_{\tt nc}({{\GL_2}}) \ot k_t \xrightarrow{\phi} \Oscr_{\tt nc}({{\GL_2}}) \ot \Oscr_{\tt nc}(B) \ot k_t),
$$
where $\phi=\delta_1 \ot 1 - 1 \ot \delta_2$.
	\begin{theorem}
	\label{induced}
	The induced representations decompose as a direct sum of $\nabla_I$'s, i.e.
	$$
	\induced_B^{{\GL_2}}(t) = \bigoplus_{\substack{\lambda \in \Lambda \\ wt(\lambda)=t}} \nabla_I(\lambda).
	$$
	In particular, for $t \notin X(T)^+, \induced_B^{\GL_2}(t)=0$.
	\end{theorem}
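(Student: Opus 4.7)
The plan is to identify $\induced_B^{{\GL_2}}(k_t)$ with the set $\{x \in \Oscr_{\tt nc}({{\GL_2}}) : (\id \otimes \pi)\Delta(x) = x \otimes \bar{t}\}$, where $\bar{t}$ is the image in $\Oscr_{\tt nc}(B)$ of the Laurent monomial $t$, and to determine this subspace explicitly via the Bergman basis of Lemma~\ref{basis}. The inclusion $\bigoplus_{{\tt wt}(\lambda)=t}\nabla_I(\lambda) \subseteq \induced_B^{{\GL_2}}(k_t)$ is immediate: each generator $v \in \{\delta^{\pm 1}, b, d\}$ satisfies $(\id \otimes \pi)\Delta(v) = v \otimes \pi'(v)$ with $\pi'(v) \in k[a^{\pm 1}, d^{\pm 1}]$ a monomial (no $c$-contribution), so multiplying shows that the basis vectors of $\nabla_I(\lambda)$ are eigenvectors of the right $\Oscr_{\tt nc}(B)$-coaction with eigenvalue $\overline{{\tt wt}(\lambda)}$.

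For the reverse inclusion I exploit the $\NN$-grading of $\Oscr_{\tt nc}(B) = k\langle c, d^{\pm 1}\rangle[a^{\pm 1}]$ by $c$-degree, which is compatible with the coproduct since $\Delta(c) = c \otimes a + d \otimes c$ is $c$-homogeneous of degree one. Writing $x = \sum_w c_w w$ in the Bergman basis of $\Oscr_{\tt nc}({{\GL_2}})$, the equation $(\id \otimes \pi)\Delta(x) = x \otimes \bar{t}$ decomposes by the $c$-degree of the right tensor factor: the degree zero piece enforces that $x$ lies in the right $T$-weight $t$ part, and the higher pieces must vanish. Using $\Delta(a) = a \otimes a + b \otimes c$ and $\Delta(c) = c \otimes a + d \otimes c$, each occurrence of $a$ or $c$ in a basis word $w$ contributes a unique $c$-degree-one term: for $w \in J_n$ with $k$ letters in $\{a,c\}$, the degree-one part of $(\id \otimes \pi)\Delta(w)$ is a sum $\sum_i w_{(i)}' \otimes \mu_{(i)}$ over the $k$ positions $i$ with $v_i \in \{a,c\}$, where $w_{(i)}'$ is obtained from $w$ by substituting $a \to b$ (respectively $c \to d$) at position $i$, and $\mu_{(i)}$ is a monomial in $\Oscr_{\tt nc}(B)$ containing exactly one $c$.

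The main obstacle is to verify that the assignment $(w, i) \mapsto (w_{(i)}', \mu_{(i)})$ takes distinct pairs to linearly independent basis elements of $\Oscr_{\tt nc}({{\GL_2}}) \otimes \Oscr_{\tt nc}(B)$. First, each $w_{(i)}'$ is again in $J_n$: the substitutions preserve row indices, so the forbidden consecutive patterns $ca, cb, da, db$ (avoided by $w$) cannot appear in $w_{(i)}'$ either. Second, from the pair $(w_{(i)}', \mu_{(i)})$ one recovers $(w, i)$: since $a$ is central in $\Oscr_{\tt nc}(B)$ whereas $c$ and $d$ are free, pulling out the central $a$-factors of $\mu_{(i)}$ leaves a word in $c, d^{\pm 1}$ with a unique $c$, whose position equals the rank of $i$ among the ``$d$-producing'' letters $\{b, d, \delta^{\pm 1}\}$ in $w_{(i)}'$; this pinpoints $i$ as the corresponding $b$- or $d$-letter of $w_{(i)}'$, and $w$ is then recovered by reverting the letter substitution. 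Granting this injectivity, the vanishing of the $c$-degree-one part of $(\id \otimes \pi)\Delta(x)$ forces $c_w = 0$ for every basis vector $w$ containing at least one $a$ or $c$. Hence $x$ is supported on basis vectors with only $\delta^{\pm 1}, b, d$-letters, which exactly span $\bigcup_\lambda \nabla_I(\lambda)$; combined with the right $T$-weight condition this yields $x \in \bigoplus_{{\tt wt}(\lambda)=t}\nabla_I(\lambda)$, and the final assertion $\induced_B^{{\GL_2}}(k_t) = 0$ for $t \notin X(T)^+$ is immediate from $X(T)^+ = \im{\tt wt}$.
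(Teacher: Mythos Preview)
Your approach is genuinely different from the paper's: the paper converts right $\Oscr_{\tt nc}(B)$-semi-invariants into left $\Oscr_{\tt nc}(B^+)$-semi-invariants via $\psi\circ S$ and then uses the $\nabla_I$-filtration of Lemma~\ref{lemma:filtration} together with Proposition~\ref{nab} to identify the semi-invariants inductively. You instead attack the right coaction directly on the Bergman basis. This is a perfectly reasonable strategy, but your execution has a real gap.

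The problem is your claim that each $w_{(i)}'$ lies again in $J_n$. You only verify the row-index condition (the forbidden patterns $ca,cb,da,db$), but Lemma~\ref{basis} imposes a second constraint: around an interior $\delta^{-1}$ the \emph{column} index must be non-decreasing, i.e.\ the patterns $b\delta^{-1}a$, $b\delta^{-1}c$, $d\delta^{-1}a$, $d\delta^{-1}c$ are forbidden. Your substitutions $a\to b$ and $c\to d$ both raise the column index from $1$ to $2$ and can therefore create these patterns. Concretely, take $w=a\delta^{-1}c\in J_3$ (column indices $1\le 1$, so this is a basis word). Then for $i=1$ you get $w_{(1)}'=b\delta^{-1}c$, which is \emph{not} in $J_3$; by the relation $a\delta^{-1}d-b\delta^{-1}c=1$ it equals $a\delta^{-1}d-1$, a genuine linear combination of basis words of different lengths. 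Similarly $c\to d$ in front of $\delta^{-1}a$ produces $d\delta^{-1}a=c\delta^{-1}b+1$. So your ``injectivity'' argument, which presumes that $w_{(i)}'$ is a single basis word whose letters one can read off, breaks down precisely at the $\delta^{-1}$-interactions.

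This is not unfixable, but it is also not a triviality: the length-dropping terms $\pm 1$ coming from the relations $a\delta^{-1}d-b\delta^{-1}c=1$ and $d\delta^{-1}a-c\delta^{-1}b=1$ must be controlled. One option is to pass to the associated graded for the length filtration $(\Oscr_n)_n$, where these lower-order terms disappear and $b\delta^{-1}c\equiv a\delta^{-1}d$, $d\delta^{-1}a\equiv c\delta^{-1}b$; you would then have to re-examine your recovery procedure for these swapped words and combine with an induction on length. That is essentially what the paper's filtration argument accomplishes, albeit from the $B^+$-side. As written, however, your proof does not establish the linear independence you need.
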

	\begin{proof}
	First note that one can compute $\induced_B^{{\GL_2}}(t)$ from the right~$\Oscr_{\tt nc}(B)$-semi-invariants of 
	weight $t$ for $(\Oscr_{\tt nc}({{\GL_2}}),\delta_1)$ since $f \in \induced_B^{{\GL_2}}(t)$ iff
	$
	\delta_1(f) \otimes 1=f \otimes t \otimes 1.
	$
	If $f$ is a right~$\Oscr_{\tt nc}(B)$-semi-invariant of weight $t$, one finds (using 
	Sweedler notation)
	\begin{equation}
	\begin{aligned}
	& f_{(1)} \ot f_{(2)} = f \ot t \\
	\Rightarrow  &  (S \ot S)(f_{(1)} \ot f_{(2)}) = (S \ot S)(f \ot t) \\
	\Rightarrow &  \Delta^{op} \circ S(f) = S(f) \ot \sigma(t^*) \\
	\Rightarrow & S(f)_{(2)} \ot S(f)_{(1)} = S(f) \ot \sigma(t^*)\\
	\Rightarrow & S(f)_{(1)} \ot S(f)_{(2)} =  \sigma(t^*) \ot S(f) 
	\end{aligned}
	\end{equation}
	where we used $(S \ot S) \circ \Delta=\Delta^{op} \circ
        S$. Note that we suppressed the quotient map~$\pi$. One finds
        that $S(f)$ is a semi-invariant of weight $\sigma(t^*)$ for
        the left $\Oscr_{\tt nc}(B)$-comodule $(\Oscr_{\tt nc}({{\GL_2}}),
        \Delta)$. A similar easy computation shows that~$h$ is a
        left~$\Oscr_{\tt nc}(B)$-semi-invariant of weight~$t'$ if and
        only if~$\psi(h)$ (see~\eqref{psi}) is a left~$\Oscr_{\tt
          nc}(B^+)$-semi-invariant of weight~$\sigma(t')$. All in all
        one obtains that if~$f$ is a right~$\Oscr_{\tt
          nc}(B)$-semi-invariant of weight~$t$, then~$(\psi \circ
        S)(f)$ is a left~$\Oscr_{\tt nc}(B^+)$-semi-invariant of
        weight~$t^*$. So we might as well compute the left~$\Oscr_{\tt
          nc}(B^+)$-semi-invariants.

We may refine the $(\Oscr_n)_n$ constructed in Lemma \ref{lemma:filtration}
        to an exhaustive ascending filtration $(F_n)_n$ with $F_0=0$ on
        $\Oscr_{\tt nc}({{\GL_2}})$ such that $F_{n+1}/F_n$ is isomorphic to
        $\nabla_I(\lambda)$ for suitable $\lambda$. 

Moreover, we know
        from Proposition~\ref{nab} that each $\nabla_I(\ll)$ contains a
        unique~$\Oscr_{\tt nc}(B^+)$-semi-invariant of weight $t={\tt
          wt}(\lambda)$. Using this in combination with~\eqref{bbb}
        and~\eqref{bb}, we know that the left~$\Oscr_{\tt
          nc}(B^+)$-semi-invariants in the associated graded of
        $\Oscr_{\tt nc}({{\GL_2}})$ consist of polynomials in $c,d,\delta$ and
        $\delta^{-1}$.

	If $f$ is now any left~$\Oscr_{\tt nc}(B^+)$-semi-invariant in $\Oscr_{\tt nc}({{\GL_2}})$, and $l$ is minimal such that $f \in F_l$, then 	$0 \neq f$ in 
	$F_l/F_{l-1}$ and we have a well-defined map
	$$
	F_l/F_{l-1} \xrightarrow{\delta_1} \Oscr_{\tt nc}(B^+) \ot F_l/F_{l-1}:f \mapsto t \ot f,
	$$
	so we know $\bar{f} \in F_l/F_{l-1}$ is some polynomial $g$ of length $l$ in $c,d,\delta$ and $\delta^{-1}$ and one checks that the latter are $\Oscr_{\tt
          nc}(B^+)$-semi-invariants in $\Oscr_{\tt nc}({{\GL_2}})$. So $f-g$ is a semi-invariant in $F_{l-1}$. 
By induction (using $F_0=0$)	
	 any left semi-invariant in 
	$\Oscr_{\tt nc}({{\GL_2}})$ is a polynomial in $c,d,\delta$ and $\delta^{-1}$.
Applying $(\psi \circ S)^{-1}$ to 
	these 
	polynomials, we get polynomials in $b,d,\delta^{-1}$ and $\delta$. In terms of the explicit basis of 
	$\Oscr_{\tt nc}({{\GL_2}})$, this says that if $t \in \tn{Im}({\tt wt})$, then $\induced_B^{{\GL_2}}(t)$ consists of 
	elements of the form (we no longer write the $\ot v$ all the time)
	$$
	\delta^{n_0}b^{p_1}d^{q_1} \delta^{n_1} \cdots b^{p_k}d^{q_k}\delta^{n_k},
	$$
	where the $n_{r} \in \z, p_{r},q_{r} \in \n$
which have weight $t$. 
This gives the indicated direct sum decomposition. Note that this also shows that for $t \notin X(T)^+, \induced_B^{\GL_2}(t)=0$.
	\end{proof}

\begin{remark}
Notice that this is different from the reductive algebraic group setting, where the induced representations are indecomposable, see Jantzen~\cite[II.2.8]{jantzen2}.
\end{remark}

\section{A quasi-hereditary filtration of~$\Oscr_{\tt nc}({{\GL_2}})$}

\subsection{A filtration by quasi-hereditary subcoalgebras}
Recall the ascending filtration on $\Oscr_{\tt nc}({{\GL_2}})=\cup_{n \geq 0} \Oscr_n$ introduced in \S\ref{sec:filtration}.
In this section we show the following.
\begin{theorem}
\label{th:qh}
  For every~$n$, the finite dimensional coalgebra~$\Oscr_n$ is
  quasi-hereditary with respect to the poset~$(\Lambda_n,\leq_2)$,
  which is the restriction of the poset~$(\Lambda,\leq_2)$ to words of length $\le n$ from
  Definition~\ref{definition:lambda}. Moreover the (co)standard
  comodules are given by $\Delta_I(\lambda)$, $\nabla_I(\lambda)$ for
  $\lambda\in\Lambda_n$ as defined in
  Definition~\ref{definition:nabla}.
\end{theorem}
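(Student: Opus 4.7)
The plan is to apply Proposition~\ref{prop:convenient}, taking $L(\lambda):=\socle(\nabla_I(\lambda))$ and the candidate costandards to be $\nabla_I(\lambda)$ for $\lambda\in\Lambda_n$.

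First I would verify that $\{L(\lambda):\lambda\in\Lambda_n\}$ is a complete, pairwise non-isomorphic list of simple $\Oscr_n$-comodules. Each $L(\lambda)$ is simple by Proposition~\ref{llll}, and carries highest weight $\mathtt{wt}(\lambda)$ by Lemma~\ref{lem:bijection}. Completeness follows because $\Oscr_n$ is an injective cogenerator and Lemma~\ref{lemma:filtration} exhibits it as an iterated extension of the $\nabla_I(\mu)$'s: every simple occurs as a composition factor of some $\nabla_I(\mu)$, and by downward induction on weight---using unicity of the highest-weight vector in $\nabla_I(\mu)$---every simple is isomorphic to some $L(\mu)$. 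For the delicate case of non-isomorphism when distinct $\lambda,\mu\in\Lambda_n$ share the same weight, a counting argument balances the total composition multiplicity of each simple $S$ in $\Oscr_n$, computed from Lemma~\ref{lemma:filtration} as $\sum_\mu [\nabla_I(\mu):S]$, against the socle multiplicity $\dim S$, using the explicit basis of Lemma~\ref{basis} to track dimensions.

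Condition (1) of Proposition~\ref{prop:convenient}, namely $\Oscr_n\in\Fscr(\nabla_I)$, is exactly the content of Lemma~\ref{lemma:filtration}. The main obstacle is condition (2): proving
\[
\Ext^1_{\Oscr_n}(L(\mu),\nabla_I(\lambda))=0 \quad\text{whenever } \mu\not>_2\lambda,
\]
equivalently whenever $\mathtt{wt}(\mu)\le\mathtt{wt}(\lambda)$. Given a short exact sequence
\[
0\to\nabla_I(\lambda)\to E\to L(\mu)\to 0,
\]
I would view $E$ as a subcomodule of the regular $\Oscr_{\tt nc}({{\GL_2}})$-comodule and restrict to the Borel quotient $\Oscr_{\tt nc}(B^+)$. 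Lemma~\ref{lemma:semi} (the non-commutative Lie--Kolchin theorem) supplies an $\Oscr_{\tt nc}(B^+)$-semi-invariant in $E$, and the reasoning of Proposition~\ref{nab} (injectivity of the restriction $\nabla_I(\lambda)\hookrightarrow\Oscr_{\tt nc}(B^+)$ combined with Lemma~\ref{lemma:grouplike}) forces any semi-invariant of weight $\mathtt{wt}(\lambda)$ to already lie in $\nabla_I(\lambda)$ as a scalar multiple of $\lambda$. A careful weight-count, exploiting the hypothesis on $\mu$ and the cogeneration of $L(\mu)$ by its unique highest-weight vector (Proposition~\ref{llll}), then yields a lift of the highest-weight vector of $L(\mu)$ into $E$ generating an $\Oscr_n$-subcomodule isomorphic to $L(\mu)$, thereby splitting the extension.

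Finally, the identification of $\Delta_I(\lambda)$ as the standard comodules follows by duality. The contravariant equivalence $(-)^\ast$ on finite-dimensional $\Oscr_n$-comodules, built from the antipode of $\Oscr_{\tt nc}({{\GL_2}})$, transports the quasi-hereditary structure just established on $\Oscr_n$ to the opposite structure on the dual coalgebra, exchanging standards with costandards. Since $\Delta_I(\lambda^\ast)=\nabla_I(\lambda)^\ast$ by definition and $L(\lambda)^\ast\cong L(\lambda^\ast)$ by Proposition~\ref{prop:Ll}, the $\Delta_I(\lambda)$ are precisely the standard comodules of $\Oscr_n$.
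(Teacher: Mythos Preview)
Your overall plan matches the paper's: verify the two conditions of Proposition~\ref{prop:convenient}, using Lemma~\ref{lemma:filtration} for condition~(1), and then identify $\nabla_I(\lambda)$ with the intrinsic costandards via Lemma~\ref{lemma:donkincostandard}. The duality argument for $\Delta_I(\lambda)$ is also essentially what the paper does.

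The gap is in your treatment of condition~(2). The paper does \emph{not} argue by lifting highest-weight vectors inside an extension $E$; instead it proves Proposition~\ref{prop1}: the induced representation $\induced_B^{\GL_2}(t)$ is injective in the full subcategory of representations with all weights $\leq t$. This is obtained by Frobenius reciprocity~\eqref{equation:adjunction}, which reduces $\Hom^{\Oscr_{\tt nc}(\GL_2)}(-,\induced_B^{\GL_2}(t))$ to $\Hom^{\Oscr_{\tt nc}(B)}(-,k_t)$, together with Lemma~\ref{lemma:tweights} which further reduces to $\Hom^{\Oscr(T)}(-,k_t)$; the latter is exact because the torus is cosemisimple. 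Since $\nabla_I(\lambda)$ is a direct summand of $\induced_B^{\GL_2}({\tt wt}(\lambda))$ by Theorem~\ref{induced}, it inherits injectivity in that subcategory, which gives the required $\Ext^1$-vanishing immediately.

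Your direct argument does not close. Even granting that a non-split $E$ embeds in the regular comodule, knowing that the unique $\Oscr_{\tt nc}(B^+)$-semi-invariant of weight ${\tt wt}(\lambda)$ lies in $\nabla_I(\lambda)$ does not produce a splitting. The crucial unjustified step is the claim that a lift $v\in E$ of the highest-weight vector of $L(\mu)$ generates an $\Oscr_n$-subcomodule isomorphic to $L(\mu)$: the subcomodule $\langle v\rangle$ certainly surjects onto $L(\mu)$, but if $\langle v\rangle\cap\nabla_I(\lambda)\neq 0$ then it contains the socle $L(\lambda)$, and nothing you have written rules this out. Semi-invariance for $B^+$ is a much weaker condition than generating a complementary $\Oscr_{\tt nc}(\GL_2)$-subcomodule; the passage from the Borel to the full group is exactly what Frobenius reciprocity handles and what your argument is missing. (Your attention to the non-isomorphism of the $L(\lambda)$ for distinct $\lambda$ of equal weight is a fair point---the paper is silent on it---but your ``counting argument'' is too vague to assess.)
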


Since we will be working with a different ordering later on, we will temporarily denote the costandard comodules corresponding to~$\Oscr_n$ and~$(\Lambda_n,\leq_2)$ by~$\nabla_2$. 

We will first show that for~$\lambda \in \Lambda_n$,
one has~$\nabla_2(\lambda)=\nabla_I(\lambda)$. For this we use
Lemma~\ref{lemma:donkincostandard}. We will then use Proposition \ref{prop:convenient}
to show that $\Oscr_n$ is quasi-hereditary. 
\begin{remark}
Notice that for every $\lambda\in\Lambda$, $\pi(\lambda)$ is infinite for the ordering $\leq_2$ 
so the infinite dimensional coalgebra~$\Oscr_{\tt nc}({{\GL_2}})$ is not quasi-hereditary for $\leq_2$ (see Definition \ref{strong}). 
\end{remark}
Using Frobenius reciprocity for coalgebras (see~\cite{jantzen2} for example),
we have
\begin{equation}
\label{equation:adjunction}
\Hom^{\Oscr_{\tt nc}({{\GL_2}})}(-,\induced_B^{{\GL_2}}(t)) \cong \Hom^{\Oscr_{\tt nc}(B)}(-,t).
\end{equation}
For an inclusion of coalgebras $C\subset D$ and a $C$-comodule $V$ we say that $V$ is defined over $D$
  if its structure map $V \to C \otimes V$ has image in $D \otimes V$.
To prove quasi-hereditarity, we need the following lemma.
\begin{lemma}
\label{lemma:tweights}
An~$\Oscr_{\tt nc}(B)$-representation~$V$ with all weights~$=t$, for some fixed~$t \in \Oscr(T)$, is defined over~$\Oscr(T)$.
\end{lemma}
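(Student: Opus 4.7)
Fix a basis $\{v_1,\ldots,v_n\}$ of $V$ and let $M_{ij}\in \Oscr_{\tt nc}(B)$ be the matrix coefficients of the coaction, $\delta(v_i)=\sum_j M_{ij}\otimes v_j$. Showing that every $M_{ij}$ lies in $\Oscr(T)$ is equivalent to the conclusion of the lemma. Write $p:\Oscr_{\tt nc}(B)\twoheadrightarrow \Oscr(T)=\Oscr_{\tt nc}(B)/(c)$ for the canonical quotient. The hypothesis that every $\Oscr(T)$-weight of $V$ equals $t$ then reads $p(M_{ij})=\delta_{ij}\,t$.

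The first step is to promote this to a bi-semi-invariance statement. Applying $\mathrm{id}\otimes p$ to the coassociativity identity $\Delta(M_{ij})=\sum_k M_{ik}\otimes M_{kj}$ and substituting $p(M_{kj})=\delta_{kj}\,t$ gives $(\mathrm{id}\otimes p)\Delta(M_{ij})=M_{ij}\otimes t$; symmetrically $(p\otimes\mathrm{id})\Delta(M_{ij})=t\otimes M_{ij}$. Thus each $M_{ij}$ is a $(t,t)$-bi-semi-invariant for the $\Oscr(T)$-bicomodule structure on $\Oscr_{\tt nc}(B)$ induced by $p$ and $\Delta$.

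The remaining computational step, which is the main obstacle, is to show that every such bi-semi-invariant lies in $\Oscr(T)$ (in fact in $kt$). I would work in the basis of $\Oscr_{\tt nc}(B)=k\langle c,d^{\pm 1}\rangle[a^{\pm 1}]$ consisting of monomials $a^r d^{p_0}c\,d^{p_1}c\cdots c\,d^{p_n}$. Using $\Delta(a)=a\otimes a$, $\Delta(d)=d\otimes d$, $\Delta(c)=c\otimes a+d\otimes c$ together with the multiplicativity of $\mathrm{id}\otimes p$ and $p\otimes\mathrm{id}$, a short computation shows that for such a monomial $w$ of $c$-degree $n$
\[
(\mathrm{id}\otimes p)\Delta(w)=w\otimes \tau_R(w),\qquad (p\otimes\mathrm{id})\Delta(w)=\tau_L(w)\otimes w,
\]
where $\tau_R,\tau_L:\Oscr_{\tt nc}(B)\to\Oscr(T)$ are the algebra maps fixing $a^{\pm 1},d^{\pm 1}$ and sending $c$ respectively to $a$ and to $d$.

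Linear independence of the basis monomials $w_m$ then forces $\tau_R(w_m)=\tau_L(w_m)=t$ for every $w_m$ appearing in a bi-semi-invariant $f=\sum_m \alpha_m w_m$; since $\tau_L(w_m)\tau_R(w_m)^{-1}=(d/a)^{n_m}$, with $n_m$ the $c$-degree of $w_m$, we must have $n_m=0$, so $w_m\in\Oscr(T)$. Inside $\Oscr(T)$ both $\tau_R$ and $\tau_L$ restrict to the identity, and the weight-$t$ condition then pins $f$ to a scalar multiple of $t$. Applying this with $f=M_{ij}$ yields $M_{ij}\in kt\subset\Oscr(T)$, so $\delta(V)\subset \Oscr(T)\otimes V$, as required.
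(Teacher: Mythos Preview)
Your argument is correct and takes a genuinely different route from the paper's. The paper proceeds inductively: it first invokes the existence of a one-dimensional subrepresentation $kv$ (with $v\mapsto t\otimes v$) and then shows that any extension of $k_t$ by $k_t$ inside $\Oscr_{\tt nc}(B)$-comodules is split. The splitting is obtained by showing that an element $u$ in the ideal $(c)$ satisfying $\Delta(u)=t\otimes u+u\otimes t$ must vanish, via the auxiliary grading $|a|=-1$, $|c|=0$, $|d|=1$ and the identity $|h_{(1)}|-|h_{(2)}|=\deg_c h$.

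You instead work directly with the matrix coefficients $M_{ij}$, establish that each is a $(t,t)$-bi-semi-invariant, and then compute all such bi-semi-invariants in $\Oscr_{\tt nc}(B)$ using the algebra maps $\tau_L,\tau_R$ (replacing $c$ by $d$, respectively by $a$). The key identity $\tau_L(w)\tau_R(w)^{-1}=(d/a)^{\deg_c w}$ is really the same combinatorial fact the paper exploits with its bigrading, but your packaging avoids the induction on $\dim V$ and the extension analysis entirely, and in one stroke shows $M_{ij}\in kt$ (hence $V\cong k_t^{\oplus\dim V}$, which the paper obtains only as the end product of the inductive argument). The paper's approach, on the other hand, isolates a reusable fact about skew-primitive-type elements in $\Oscr_{\tt nc}(B)$ that might be of independent interest. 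Both proofs ultimately rest on the commutativity of $\Oscr(T)$ and the transparency of the $c$-degree under $\Delta$.
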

\begin{proof}
We already proved that for every~$\Oscr_{{\tt nc}}(B)$-representation, there exists a one-dimensional subrepresentation. Denote this representation by~$kv$. Since all weights are equal to~$t$, and the grouplike elements of~$\Oscr_{{\tt nc}}(B)$ are 
in $\Oscr(T)$, this representation is in fact
\begin{equation}
kv \to \Oscr_{{\tt nc}}(B) \otimes kv: v \mapsto t \otimes v.
\end{equation}
It remains to show that any extension between two one-dimensional representations of weight $t$ is split. 
Such extension is a two-dimensional representation with basis $v,w$ such that
\begin{align*}
\delta(v)&=t\otimes v\\
\delta(w)&=t\otimes w + u\otimes v
\end{align*}
such that $u\in \Oscr_{{\tt nc}}(B)$ is in the ideal generated by $c$. From coassociativity one obtains
\begin{equation}
\label{eq:coassoc}
\Delta(u)=t\otimes u+u\otimes t
\end{equation}
Let $\deg_c$ be the grading on $\Oscr_{{\tt nc}}(B)$ by $c$-degree and
define a second grading on  $\Oscr_{{\tt nc}}(B)$ by $|a|=-1$, $|c|=0$, $|d|=1$. Then one checks for 
$h$ homogeous in $c$:
\[
|h_{(1)}|-|h_{(2)}|=\deg_c h
\]
It follows that if $p\otimes q$ is a term on the righthand side of \eqref{eq:coassoc} then $|p|-|q|>0$. But this 
is clearly a contradiction since the righthand side of \eqref{eq:coassoc} is preserved under $p\otimes q\mapsto q\otimes p$.
So we obtain $u=0$.
\end{proof}

\begin{proposition}
\label{prop1}
The coinduced comodule $\induced_B^{{\GL_2}}(t)$ is injective in the full subcategory of $\Oscr_{\tt nc}({{\GL_2}})$-representations of weights $\leq_2 t$.
\end{proposition}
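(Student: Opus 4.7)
By the Frobenius reciprocity adjunction~\eqref{equation:adjunction}, the functor $\Hom^{\Oscr_{\tt nc}({{\GL_2}})}(-,\induced_B^{{\GL_2}}(k_t))$ on $\mathcal{C}_{\leq t}$ coincides with $\Hom^{\Oscr_{\tt nc}(B)}(-,k_t)$. So the injectivity of $\induced_B^{{\GL_2}}(k_t)$ in $\mathcal{C}_{\leq t}$ reduces to the following extension statement: for every inclusion $V'\hookrightarrow V$ of $\Oscr_{\tt nc}({{\GL_2}})$-representations of weights $\leq_2 t$, every $\Oscr_{\tt nc}(B)$-comodule morphism $\bar f\colon V'\to k_t$ lifts to an $\Oscr_{\tt nc}(B)$-comodule morphism $\bar g\colon V\to k_t$.

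The first step is a weight-vanishing lemma: for $W\in\mathcal{C}_{\leq t}$, any $\Oscr_{\tt nc}(B)$-morphism $h\colon W\to k_t$ annihilates every $\Oscr(T)$-weight subspace $W[s]$ with $s\neq t$. Indeed, using the vector space splitting $\Oscr_{\tt nc}(B)=\Oscr(T)\oplus(c)$, any $v\in W[s]$ satisfies $\delta_B(v)=s\otimes v+u$ with $u\in(c)\otimes W$; applying $(\mathrm{id}\otimes h)$ to the coequivariance identity and then projecting onto $\Oscr(T)\otimes k_t$ yields $(s-t)h(v)=0$ in the integral domain $\Oscr(T)$, forcing $h(v)=0$ whenever $s\neq t$.

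Let $K\subset V$ and $K'\subset V'$ denote the $\Oscr_{\tt nc}(B)$-subcomodules generated respectively by $\bigoplus_{s\neq t}V[s]$ and $\bigoplus_{s\neq t}V'[s]$. Since kernels of comodule morphisms are subcomodules, the weight-vanishing lemma implies $\bar f$ factors as $V'\twoheadrightarrow V'/K'\to k_t$ and any candidate $\bar g$ must factor as $V\twoheadrightarrow V/K\to k_t$. Both $V/K$ and $V'/K'$ are $\Oscr_{\tt nc}(B)$-comodules with all $\Oscr(T)$-weights equal to $t$, so by Lemma~\ref{lemma:tweights} they are defined over $\Oscr(T)$ and hence are direct sums of copies of $k_t$. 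Consequently $\Hom^{\Oscr_{\tt nc}(B)}(V,k_t)\cong(V/K)^\ast$ and similarly $\Hom^{\Oscr_{\tt nc}(B)}(V',k_t)\cong(V'/K')^\ast$, with the restriction map dual to the natural linear map $V'/K'\to V/K$.

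The extension problem is thereby reduced to showing the natural map $V'/K'\to V/K$ is injective, i.e.\ $V'\cap K=K'$: once this is known, any linear form on $V'/K'$ extends to a linear form on $V/K$ by choosing a splitting, and this linear form produces the desired $\bar g$. The inclusion $K'\subset V'\cap K$ is immediate, while the reverse inclusion is the main obstacle. Here one uses that $V$ is an $\Oscr_{\tt nc}({{\GL_2}})$-comodule (so that its weight set is $\sigma$-symmetric, not just an arbitrary $\Oscr_{\tt nc}(B)$-weight set) together with the explicit right-weight algebra map $\mathrm{wt}\colon\Oscr_{\tt nc}(B)\to\Oscr(T)$ sending $a,c\mapsto a$ and $d\mapsto d$, which ensures that whenever $\delta_B(v)=\sum x_i\otimes v_i$ one has $v_i\in V[\mathrm{wt}(x_i)]$. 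Tracking weights through the inductive construction of $K$ and using that $V'$ is a subcomodule (so its coaction is just the restriction of the coaction on $V$), one shows that any element of $V'\cap K$ can already be obtained inside $V'$ from $\bigoplus_{s\neq t}V'[s]$, and therefore belongs to $K'$.
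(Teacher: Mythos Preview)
Your overall strategy is sound and close to the paper's, but the final step—establishing $V'\cap K=K'$—is not actually proved, and the ingredients you invoke do not suffice. The appeal to ``$\sigma$-symmetric weight sets'' is a red herring: Weyl symmetry of the weight multiset of a $\Oscr_{\tt nc}(\GL_2)$-comodule says nothing about how $K$ sits inside $V$ relative to $V'$. And ``tracking weights through the inductive construction of $K$'' is too vague: knowing only that $v_i\in V[\mathrm{wt}(x_i)]$ for the \emph{right} weight $\mathrm{wt}(x_i)$ does not by itself bound those weights below $t$.

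What is actually needed—and what makes the whole detour through $K$ unnecessary—is the observation that $\bigoplus_{s<t}V[s]$ is \emph{already} an $\Oscr_{\tt nc}(B)$-subcomodule. In your own language: besides the right weight $\mathrm{wt}$ (sending $a,c\mapsto a$, $d\mapsto d$), there is a \emph{left} weight $\mathrm{wt}'$ on $\Oscr_{\tt nc}(B)$ (sending $a\mapsto a$, $c,d\mapsto d$); coassociativity gives $\mathrm{wt}'(x_i)=s$ whenever $v\in V[s]$, and since $\mathrm{wt}(x)\le\mathrm{wt}'(x)$ for every monomial (the only off-diagonal generator $c$ has right weight $a<d=$ left weight), one gets $v_i\in V[s']$ with $s'\le s<t$. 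Hence $K=\bigoplus_{s<t}V[s]$ and likewise $K'=\bigoplus_{s<t}V'[s]$, so $V'\cap K=K'$ follows at once from the $T$-weight decomposition. This is precisely the paper's route: it sets $M'=\bigoplus_{s<t}M[s]$, notes it is an $\Oscr_{\tt nc}(B)$-subcomodule, and then concludes $\Hom^{\Oscr_{\tt nc}(B)}(M,t)\cong\Hom^{\Oscr(T)}(M,t)$ functorially, which is exact since $\Oscr(T)$ is cosemisimple.
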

\begin{proof}
This follow from the isomorphism~\eqref{equation:adjunction} and the fact that on this subcategory, we have that
$$
\Hom^{\Oscr_{\tt nc}(B)}(-,t) \cong \Hom^{\Oscr(T)}(-,t).
$$
To see this, first note that for representations $M'$ of weights $< _2 t$ we have that
$$
\Hom^{\Oscr(T)}(M',t)=0
$$
and thus also $\Hom^{\Oscr_{\tt nc}(B)}(M',t)=0$ since $\Hom^{\Oscr_{\tt nc}(B)}(M',t) \subset \Hom^{\Oscr(T)}(M',t)$. For a representation $M''$ that has all weights equal to $t$, by Lemma~\ref{lemma:tweights} there is an equality 
$$
\Hom^{\Oscr_{\tt nc}(B)}(M'',t) \cong \Hom^{\Oscr(T)}(M'',t).
$$
Given any $\Oscr_{\tt nc}({{\GL_2}})$-representation $M$ of weights $\leq_2 t$, we now set $M'$ to be the subspace of $M$ generated by all weight vectors of weight $<_2 t$. This is obviously a $\Oscr_{\tt nc}(B)$-comodule so we have an exact sequence (of $\Oscr_{\tt nc}(B)$-comodules)
$$
0 \to M' \to M \to M/M' \to 0.
$$
By applying $\Hom^{\Oscr_{\tt nc}(B)}(-,t)$, one finds
$$
\Hom^{\Oscr_{\tt nc}(B)}(M/M', t) \cong \Hom^{\Oscr_{\tt nc}(B)}(M,t),
$$ 
and since $M/M'$ has weights $= t$, we see that 
$$
\Hom^{\Oscr_{\tt nc}(B)}(M,t) \cong \Hom^{\Oscr(T)}(M,t).
$$
Now the functor $\Hom^{\Oscr(T)}(-,t)$ is exact since all torus representations are semisimple, i.e. $\Oscr(T)$ is cosemisimple, so indeed we see that $\induced_B^{{\GL_2}}(t)$ is injective in the category of $\Oscr_{\tt nc}({{\GL_2}})$-representations of weight $\leq_2 t$.
\end{proof}
\begin{proof}[Proof of Theorem \ref{th:qh}]
  From their construction as quotients of the $M(\lambda)$, it is
  clear that the~$\nabla_I(\lambda)$,~$\lambda \in \Lambda_n$, are
  defined over $\Oscr_n$. To prove that $\nabla_2(\lambda)=\nabla_I(\lambda)$
we verify the properties (2a),(2b),(2c) in Lemma \ref{lemma:donkincostandard}.

By Proposition~\ref{llll} the subrepresentation $L(\ll)$ cogenerated
by $\lambda$ is the unique simple $\Oscr_{\tt nc}({{\GL_2}})$-subcomodule of
$\nabla_I(\lambda)$, and thus also the unique simple
$\Oscr_n$-subcomodule of $\nabla_I(\ll)$, if $\ll \in \Lambda_n$. This proves \ref{lemma:donkincostandard}(2a). By
Proposition~\ref{prop1}, the $\nabla_I(\ll)$ are injective in the
category of $\Oscr_{\tt nc}({{\GL_2}})$-representations of weights $\leq_2
{\tt wt}(\ll)$. In particular, this shows that for $\ll \in
\Lambda_n$, one has $\tn{Ext}^1_{\Oscr_n}(L(\mu),\nabla_I(\ll))=0$ for
$\mu \leq_2 \ll$. This proves  \ref{lemma:donkincostandard}(2c). Also, all composition factors of $\nabla_I(\ll)$ different from
$L(\lambda)$
are of the form $L(\mu)$ with
 $\mu <_2 \lambda$ since the weights of those composition factors sit
strictly between $\sigma({\tt wt}(\ll))$ and ${\tt wt}(\ll)$ by
Proposition \ref{prop:Ll} and Lemma \ref{lem:bijection}. This proves \ref{lemma:donkincostandard} (2b)
and  we conclude $\nabla_2(\lambda)=\nabla_I(\lambda)$.

To prove that $\Oscr_n$ is quasi-hereditary we verify properties (1)(2) of Proposition \ref{prop:convenient}.
\ref{prop:convenient}(1) follows immediately from Lemma \ref{lemma:filtration}. To prove
\ref{prop:convenient}(2) assume that $\Ext^1(L(\mu),\Delta(\lambda))\neq 0$. Then by the proof
of Proposition \ref{prop1}, $\operatorname{wt}(\mu)>\operatorname{wt}(\lambda)$. In other words $\mu>_2\lambda$.

To prove that $\Delta_2(\lambda)=\Delta_I(\lambda)$ we note that by
definition $\Delta_I(\lambda)=\nabla_I({}^\ast\!\lambda)^\ast$
(${}^\ast\!(-)$ is the inverse to $(-)^\ast$) and by Proposition \label{prop:Ll} we have $L(\lambda)=L({}^\ast\!\lambda)^\ast$.
One now verifies the properties (2a),(2b),(2c) for $\Delta_I(\lambda)$ in the dual version of Lemma \ref{lemma:donkincostandard}
by dualizing the corresponding properties for $\nabla_I(\lambda)$.
\end{proof}

\section{$\Oscr_{\tt nc}({{\GL_2}})$ is quasi-hereditary}

To prove $\Oscr_{\tt nc}({{\GL_2}})$ is quasi-hereditary as in Definition~\ref{strong}, we use a different ordering on $\Lambda$. Note that $\Lambda$ is in a natural way a semigroup. The new ordering is the left-right invariant ordering generated by 
\begin{equation}
\begin{aligned}
1&<d\delta^{-1}d \\ \delta&<dd
\end{aligned}
\end{equation}
and will be denoted $<_1$. This ordering is invariant under $(-)^*$. Note also that for any $\lambda \in \Lambda$, the set $\pi(\ll)=\{\mu \in \Lambda \ \vert \ \mu <_1 \lambda\}$ is finite. So in particular, there do exist finite saturated subsets. 
	\begin{lemma}
\label{lemma:nablafilt}
	For a given $\lambda = \delta^{x_1} d^{y_1} \cdots \delta^{x_n} d^{y_n} \in \Lambda$, $M(\ll)$ has a $\nabla_I$-filtration 
        such that the subquotients are of the form $\nabla_I(\mu)$, for $\mu \in \pi(\ll)$ and $\nabla_I(\lambda)$ occurring with multiplicity one.
	\end{lemma}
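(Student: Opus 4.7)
The plan is to induct on $\lambda$ in the well-founded ordering $<_1$ (finite by the assumption that $\pi(\lambda)$ is finite). For the base case we handle all $\lambda$ with every $y_i\le 1$: a direct count gives $\dim M(\lambda)=2^{\#\{i:y_i=1\}}=\prod_i(y_i+1)=\dim\nabla_I(\lambda)$, and the canonical epimorphism from Lemma~\ref{lem:bijection} is then an isomorphism, so the trivial filtration $0\subset M(\lambda)$ has unique subquotient $\nabla_I(\lambda)$.

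For the inductive step, assume some $y_i\geq 2$ and write $\lambda=\alpha\cdot d\cdot d\cdot\beta$ with the two $d$'s consecutive in the tensor product. The inclusion $\wedge^2V\hookrightarrow V\otimes V$ produces a subcomodule
\[
M(\alpha)\otimes\wedge^2V\otimes M(\beta)\;\cong\; M(\lambda_0),\qquad \lambda_0:=\alpha\cdot\delta\cdot\beta,
\]
with $\lambda_0<_1\lambda$ by left-right invariance of $<_1$, so by induction $M(\lambda_0)$ admits a $\nabla_I$-filtration with subquotients $\nabla_I(\nu)$ for $\nu\in\pi(\lambda_0)\cup\{\lambda_0\}\subset\pi(\lambda)$, and $\nabla_I(\lambda_0)$ occurring once. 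Since the relation $bd=db$ in $\Oscr_{\tt nc}({{\GL_2}})$ kills the antisymmetrization, the canonical surjection $M(\lambda)\twoheadrightarrow\nabla_I(\lambda)$ factors through the quotient
\[
Q:=M(\lambda)/M(\lambda_0)\;\cong\; M(\alpha)\otimes S^2V\otimes M(\beta),
\]
giving a surjection $Q\twoheadrightarrow\nabla_I(\lambda)$ whose top is the desired copy of $\nabla_I(\lambda)$. It remains to show the kernel $K$ of $Q\twoheadrightarrow\nabla_I(\lambda)$ is itself $\nabla_I$-filtered with subquotients $\nabla_I(\mu)$, $\mu\in\pi(\lambda)$.

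For this, I would exhibit $\ker\bigl(M(\lambda)\to\nabla_I(\lambda)\bigr)$ as the sum $\sum_{(p,p+1)}K_{(p,p+1)}$ over consecutive $V$-position pairs, where each $K_{(p,p+1)}$ is the analogous $\wedge^2V$-insertion and is isomorphic to $M(\lambda_{(p,p+1)})$ for some $\lambda_{(p,p+1)}<_1\lambda$. One then orders the pairs (starting with our chosen $(p_0,p_0{+}1)$) and builds the filtration by adding these subcomodules one at a time. The successive quotients, computed by the modular law, are controlled by intersections $K_{(p,p+1)}\cap K_{(p',p'+1)}$: non-overlapping ones equal $M(\lambda'')$ with \emph{both} pairs replaced by $\delta$ (in $\Fscr(\nabla_I)$ by induction), while overlapping ones vanish by the direct computation that $\wedge^2V\otimes V$ and $V\otimes\wedge^2V$ meet trivially inside $V^{\otimes 3}$. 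Using closure of $\Fscr(\nabla_I)$ under extensions---a formal consequence of the quasi-heredity established in Theorem~\ref{th:qh}---one stitches these pieces into a $\nabla_I$-filtration of $K$, hence of $Q$, and hence of $M(\lambda)$. The main obstacle is this combinatorial bookkeeping of the kernel; all remaining subquotients $\nabla_I(\mu)$ introduced along the way index elements $\mu<_1\lambda_{(p,p+1)}<_1\lambda$ and so lie in $\pi(\lambda)$ as required, while $\nabla_I(\lambda)$ appears only at the very top with multiplicity one.
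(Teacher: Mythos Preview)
Your approach and the paper's are essentially the same idea---filter $M(\lambda)$ by successively inserting copies of $\wedge^2 V\cong R$ at adjacent $V$-positions---but the paper simply asserts the explicit construction via two worked examples, while you attempt an inductive reformulation. There are two issues worth flagging.

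First, the appeal to Theorem~\ref{th:qh} for ``closure of $\Fscr(\nabla_I)$ under extensions'' is both unnecessary and potentially circular. Closure under extensions is immediate from the definition of $\Fscr(\nabla_I)$ (concatenate filtrations); no quasi-heredity is needed. More importantly, the version of Theorem~\ref{th:qh} in \S6 \emph{uses} this lemma (via Lemma~\ref{lemma:previous}), so you must not invoke it here. If you meant the \S5 version (for $\leq_2$), that is available, but again irrelevant for mere closure under extensions.

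Second, and more substantively, the inductive framing does not actually reduce the work. After extracting $M(\lambda_0)$ you still revert to analyzing the full kernel $\sum_{(p,p+1)} K_{(p,p+1)}$, and the crux---which you rightly call ``the main obstacle''---is computing the successive quotients $K_{(p,p+1)}/\bigl(K_{(p,p+1)}\cap\sum_{\text{prev}}K_{(q,q+1)}\bigr)$. The modular law does \emph{not} in general give $K\cap(A+B)=(K\cap A)+(K\cap B)$, so your reduction to pairwise intersections needs justification. (It does hold here, essentially because $\wedge^3 V=0$ forces overlapping insertions to intersect trivially and the non-overlapping ones behave like a tensor product, but this is exactly the computation that must be done.) A cleaner route, if you want a genuine induction, is to strengthen the hypothesis to cover tensor products of the form $R^{s_1}(S^{a_1}V)R^{s_2}(S^{a_2}V)\cdots$ (the ``mixed'' objects appearing as $Q$), so that after quotienting by $M(\lambda_0)$ you can apply the induction hypothesis directly to $Q=M(\alpha)\otimes S^2V\otimes M(\beta)$ rather than unpacking the kernel again.
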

	\begin{proof}
	We may prove this by constructing the filtration explicitly. E.g. for $\lambda=d^4$ one has
$$
0 \subset R^2 \subset VVR \subset VVR + VRV \subset VVR+VRV+RVV \subset VVVV=M(\ll),
$$
with respective subquotients $\nabla_I(\delta^2) : \nabla_I(d^2 \delta) : \nabla_I(d \delta d) : \nabla_I(\delta d^2) : 
\nabla_I(d^4)$. If we take $\lambda=d^2\delta^{-1}d$, then one has
$$
0 \subset V \subset VVR^{-1}V,
$$
with subquotients $\nabla_I(d) : \nabla_I(d^2 \delta^{-1}d)$. 
	\end{proof}

We first show that the~$\Oscr_n$ as used in the previous section, are also quasi-hereditary with respect to~$\leq_1$. The corresponding costandard comodules will be denoted~$\nabla_1$.

\begin{lemma}
\label{lemma:previous}
The coalgebra~$\Oscr_n$ fulfills the three conditions of Definition~\ref{qh} with respect to 
$\nabla_I(\lambda), \lambda \in \Lambda_n$. More explicitly:
\begin{enumerate}
\item $I(\lambda) \in \Fscr(\nabla_I),$ 
\item $(I(\lambda):\nabla_I(\lambda))=1,$ 
\item $(I(\lambda):\nabla_I(\mu)) \neq 0 \Rightarrow \mu \geq_1 \lambda.$
\end{enumerate}
\end{lemma}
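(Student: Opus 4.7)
Conditions (1) and (2) of Definition~\ref{qh} depend only on the existence of a $\nabla_I$-filtration on $I(\lambda)$ and on the multiplicity of $\nabla_I(\lambda)$ itself, neither of which involves the partial order. Both are already established in Theorem~\ref{th:qh} (with respect to $\leq_2$) and transfer verbatim. The substance of the present lemma lies in condition (3). Since the generating relations $1 <_1 d\delta^{-1}d$ and $\delta <_1 dd$ both strictly increase the lexicographic weight, one checks directly that $\mu <_1 \lambda \Rightarrow \mu <_2 \lambda$; accordingly, the required implication
\[
(I(\lambda):\nabla_I(\mu)) \neq 0 \;\Longrightarrow\; \mu \geq_1 \lambda
\]
is strictly stronger than the $\leq_2$ version already in hand.

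My route would be to leverage BGG reciprocity, which is available because $\Oscr_n$ is already quasi-hereditary with respect to $\leq_2$:
\[
(I(\lambda):\nabla_I(\mu)) = [\Delta_I(\mu):L(\lambda)].
\]
It therefore suffices to show that if $L(\lambda)$ occurs as a composition factor of $\Delta_I(\mu)$, then $\lambda \leq_1 \mu$. Using $\Delta_I(\mu) = \nabla_I({}^\ast\mu)^\ast$, Proposition~\ref{prop:Ll} (which gives $L(\lambda)^\ast = L(\lambda^\ast)$), and the $(-)^\ast$-invariance of $\leq_1$ recorded when the ordering was defined, this reduces to the following dual composition-factor statement: every composition factor $L(\nu)$ of a costandard comodule $\nabla_I(\kappa)$ satisfies $\nu \leq_1 \kappa$. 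This is the heart of the matter, and constitutes an upgrade of Theorem~\ref{th:qh}(2b) from $\leq_2$ to $\leq_1$.

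To establish this refined bound I would argue by induction on $\kappa$ with respect to $\leq_1$, which is well-founded on the finite set $\Lambda_n$. The induction feeds on Lemma~\ref{lemma:nablafilt}: the module $M(\kappa)$ admits an explicit $\nabla_I$-filtration whose subquotients are $\nabla_I(\nu')$ for $\nu' \leq_1 \kappa$, with $\nabla_I(\kappa)$ itself appearing exactly once. By the induction hypothesis, each smaller $\nabla_I(\nu')$ has only composition factors $L(\nu)$ with $\nu \leq_1 \nu' <_1 \kappa$, while Proposition~\ref{llll} pins down $\socle(\nabla_I(\kappa)) = L(\kappa)$. Combined with the weight envelope from Lemma~\ref{lem:bijection}, one should be able to identify the composition factors of $\nabla_I(\kappa)$ with those appearing in a segment of the $\nabla_I$-filtration of $M(\kappa)$, and thereby close the induction.

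The main obstacle will be this final matching step. Since $\nabla_I(\kappa)$ is only a subobject of $M(\kappa)$ — not in general a filtration piece — its composition factors may a priori be distributed across several of the subquotients $\nabla_I(\nu')$, and one has to rule out that some ``foreign'' $L(\nu)$ with $\nu \not\leq_1 \kappa$ slips in via the embedding. I expect this can be controlled by a direct weight/length analysis using the explicit Bergman basis of $\Oscr_{\tt nc}({{\GL_2}})$ from Lemma~\ref{basis}, restricted to the span realizing $\nabla_I(\kappa) \subset M(\kappa)$.
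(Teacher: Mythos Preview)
Your reduction via BGG reciprocity is valid, and the duality step is fine, so the problem does reduce to showing that every composition factor $L(\nu)$ of $\nabla_I(\kappa)$ satisfies $\nu \leq_1 \kappa$. However, the proposed induction on $\kappa$ via Lemma~\ref{lemma:nablafilt} is circular. In the $\nabla_I$-filtration of $M(\kappa)$ the piece $\nabla_I(\kappa)$ occurs precisely as the \emph{top quotient} $M(\kappa)/K$ (it is a quotient of $M(\kappa)$, not a subobject as you write). The induction hypothesis controls the composition factors of the kernel $K$---they are all $<_1 \kappa$---but this places no constraint whatsoever on the composition factors of the quotient $\nabla_I(\kappa)$: the composition factors of $M(\kappa)$ are the union (with multiplicity) of those of $K$ and those of $\nabla_I(\kappa)$, so knowing the former says nothing new about the latter. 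The weight envelope from Lemma~\ref{lem:bijection} only recovers the $\leq_2$-bound already in hand, and the proposed ``direct weight/length analysis'' via the Bergman basis is not a proof.

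The paper circumvents this by not attempting to bound composition factors of $\nabla_I$ at all. Instead it observes that $(I(\lambda):\nabla_I(\mu))\neq 0$ forces a chain of nonvanishing $\Ext^1(\nabla_I(\lambda_i),\nabla_I(\lambda_{i+1}))$ linking $\mu$ to $\lambda$, and then proves the key implication $\Ext^1(\nabla_I(\gamma),\nabla_I(\eta))\neq 0 \Rightarrow \gamma \geq_1 \eta$. For this it applies $\Hom(-,\nabla_I(\eta))$ to the sequence $0\to K\to M(\gamma)\to \nabla_I(\gamma)\to 0$; since $M(\gamma)$ also carries a $\Delta_I$-filtration (the dual version of Lemma~\ref{lemma:nablafilt}), the $\leq_2$-quasi-heredity gives $\Ext^1(M(\gamma),\nabla_I(\eta))=0$, pushing the nonvanishing down to $\Hom(K,\nabla_I(\eta))$. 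Now the explicit $\leq_1$-bounded $\nabla_I$-filtration on $K$, together with the orthogonality $\Hom(\Delta_I(\theta),\nabla_I(\eta))=\delta_{\theta\eta}\,k$, yields $\eta \leq_1 \zeta <_1 \gamma$ for some filtration label $\zeta$. The essential ingredient you are missing is that $M(\gamma)$ is a tilting object---carrying both a $\nabla_I$- and a $\Delta_I$-filtration with factors bounded by $\gamma$ for $\leq_1$---which is what makes the $\Ext$ and $\Hom$ computations tractable.
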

\begin{proof}
By Theorem \ref{th:qh} we know that~$\Oscr_n$ is quasi-hereditary with
  respect to~$\leq_2$, and~$\nabla_2=\nabla_I$, $\Delta_2=\Delta_I$.
In particular  conditions~(1) and~(2) are satisfied. Condition~(3) requires more
  work, since~$\leq_2$ is a refinement of~$\leq_1$.
We will use that since~$\Oscr_n$ is
quasi-hereditary with respect to~$\leq_2$,  that 
$$
\tn{Ext}^1_{\Oscr_n}(\Delta_I,\nabla_I)=0.
$$
(for example by
Definition~\ref{definition:drqh} below).
 First of all, if
  $\nabla_I(\mu)$ is a $\nabla_I$-composition factor of $I(\lambda)$, then
  there exists a chain $\mu=\lambda_0, \lambda_1, \ldots,
  \lambda_{n-1}, \lambda_n=\lambda$, such that
\begin{align}
\label{ext2}
\tn{Ext}^1(\nabla_I(\lambda_i),\nabla_I(\lambda_{i+1})) \neq 0,
\end{align}
since~$\nabla_I(\lambda)$ is the lowest piece in the~$\nabla$-filtration on~$I(\lambda)$.
The second claim is that whenever one has $\gamma,\eta \in \Lambda$ such that
\begin{align}
\label{ext}
\tn{Ext}^1(\nabla_I(\gamma),\nabla_I(\eta)) \neq 0,
\end{align}
one has $\gamma \geq_1 \eta$. To see this, note that there is an exact sequence
$$
0 \to K \to M(\gamma) \to \nabla_I(\gamma) \to 0.
$$
From the explicit form of the $\nabla_I$-filtration on the $M(\gamma)$ (see Lemma \ref{lemma:nablafilt}), we know that $K$ has a filtration by $\nabla_I(\gamma')$, for $\gamma >_1 \gamma'$. Applying $\Hom(-,\nabla_I(\eta))$, one obtains
$$
\Hom(K,\nabla_I(\eta)) \to \tn{Ext}^1(\nabla_I(\gamma),\nabla_I(\eta)) \to 0,
$$
since $M(\gamma)$ has a $\Delta_I=\Delta_2$-filtration. In
particular,~$\Hom(K,\nabla_I(\eta)) \neq 0$. This implies that
$\Hom(\nabla_I(\zeta),\nabla_I(\eta)) \neq 0$ for some
$\nabla_I$-filtration factor of $K$. Again, we can study this condition
using the $M$'s: from the existence of the surjective map
$M(\zeta) \twoheadrightarrow \nabla_I(\zeta)$, it follows that
$\Hom(M(\zeta),\nabla_I(\eta)) \neq 0$. 

Now since~$M(\zeta)$ also has a filtration by $\Delta_I$'s, one can
explicitly compute the relevant Hom-space from the spaces
$\Hom(\Delta_I(\theta),\nabla_I(\eta))$, for $\Delta_I(\theta)$ a
$\Delta_I$-filtration factor of $M(\zeta)$. By again using the explicit
filtration on the $M$'s, the only way for these Hom-spaces not to
vanish is for $\zeta \geq_1 \eta$, i.e.
\begin{align}
\label{homms}
\Hom(\nabla_I(\zeta),\nabla_I(\eta)) \neq 0 \Rightarrow \zeta \geq_1 \eta. 
\end{align}
Hence we obtain $\gamma \geq_1 \zeta \geq_1 \eta$, so this means that~\eqref{ext} is true, and by~\eqref{ext2} we have $\lambda=\mu_0 \geq_1 \mu_1 \geq_1 \cdots \geq_1 \mu_{n-1} \geq_1 \mu_n=\mu$.
\end{proof}

Note that to prove that~$\Oscr_n$ is quasi-hereditary with respect to~$\leq_1$, we still need to show that~$\nabla_1=\nabla_I$.

\begin{lemma}
\label{lem:previous2}
For~$\Oscr_n$, the comodules~$\nabla_I(\lambda)$ coincide with the costandard comodules~$\nabla_1(\lambda)$ with respect to~$\leq_1$.
\end{lemma}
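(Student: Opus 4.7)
The plan is to apply Lemma~\ref{lemma:donkincostandard} in the $\leq_1$ setting to identify $\nabla_I(\lambda)$ with $\nabla_1(\lambda)$. It suffices to verify that $\nabla_I(\lambda)$ satisfies conditions (2a), (2b) and (2c) of that lemma with respect to $\leq_1$. Condition (2a), $\socle \nabla_I(\lambda) \cong L(\lambda)$, is immediate from Proposition~\ref{nab} and Proposition~\ref{llll}. For (2c) I first observe that $<_2$ refines $<_1$: the defining generators $1 <_1 d\delta^{-1}d$ and $\delta <_1 dd$ both hold in $<_2$ (since ${\tt wt}(d\delta^{-1}d)=a^{-1}d>_2 1$ and ${\tt wt}(dd)=d^2>_2 ad={\tt wt}(\delta)$ in the lexicographical ordering), and $<_2$ is left-right semigroup invariant because ${\tt wt}$ is a monoid homomorphism. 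Hence $\mu<_1\lambda$ implies $\mu<_2\lambda$, and the $\leq_2$-quasi-hereditary structure from Theorem~\ref{th:qh} combined with Lemma~\ref{lemma:donkincostandard}(2c) applied in the $\leq_2$ setting then yields $\Ext^1(L(\mu),\nabla_I(\lambda))=0$.

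The crux is (2b): every composition factor $L(\mu)$ of $\nabla_I(\lambda)/L(\lambda)$ satisfies $\mu<_1\lambda$. I would derive this from Lemma~\ref{lemma:previous}(3) via BGG-type reciprocity. In the $\leq_2$-quasi-hereditary structure supplied by Theorem~\ref{th:qh}, BGG-reciprocity gives
\[
(I(\lambda):\nabla_I(\mu)) = [\Delta_I(\mu):L(\lambda)],
\]
and Lemma~\ref{lemma:previous}(3) says the left-hand side is nonzero only if $\mu\geq_1\lambda$. Using $\Delta_I(\mu)=\nabla_I({}^*\mu)^*$ and $L(\lambda)^*\cong L(\lambda^*)$ (Proposition~\ref{prop:Ll}), together with invariance of composition-factor multiplicities under duality, the right-hand side rewrites as $[\nabla_I({}^*\mu):L({}^*\lambda)]$. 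Since $(-)^*$ is order-preserving for $\leq_1$, $\mu\geq_1\lambda$ transfers to ${}^*\mu\geq_1{}^*\lambda$. Relabeling $\alpha={}^*\mu$, $\beta={}^*\lambda$ one concludes that any composition factor $L(\beta)$ of $\nabla_I(\alpha)$ satisfies $\beta\leq_1\alpha$, with equality realized only by the multiplicity-one socle $L(\alpha)$ (Lemma~\ref{lem:bijection}). Specializing to $\alpha=\lambda$ gives (2b).

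The main obstacle is (2b): its verification hinges on BGG-reciprocity in the previously established $\leq_2$-quasi-hereditary structure, on the duality $(-)^*$ on $\Lambda_n$ being suitably invertible, and on the fact that both $(-)^*$ and its inverse preserve the ordering $\leq_1$. Once the three conditions (2a)--(2c) are in place, Lemma~\ref{lemma:donkincostandard} identifies $\nabla_I(\lambda)$ with $\nabla_1(\lambda)$, which is the claim.
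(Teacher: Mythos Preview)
Your overall strategy via Lemma~\ref{lemma:donkincostandard} is sound, and your treatment of (2a) and (2c) matches the paper's (you make explicit the fact that $<_1$ is refined by $<_2$, which the paper leaves implicit). The approaches diverge at (2b).

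Your route for (2b) --- BGG reciprocity $(I(\lambda):\nabla_I(\mu))=[\Delta_I(\mu):L(\lambda)]$ combined with Lemma~\ref{lemma:previous}(3), then dualising via $\Delta_I(\mu)=\nabla_I({}^*\mu)^*$ and $L(\lambda)^*\cong L(\lambda^*)$ --- is valid in principle, but there is a technical wrinkle you half-acknowledge: the map $(-)^*$ (and hence ${}^*(-)$) does \emph{not} preserve $\Lambda_n$ (e.g.\ $d^n\in\Lambda_n$ but $(d^n)^*=(d\delta^{-1})^n$ has length $2n$). Thus after relabelling $\alpha={}^*\mu$, $\beta={}^*\lambda$ you have only established the implication for $\alpha,\beta\in{}^*(\Lambda_n)$, and you cannot directly ``specialise to $\alpha=\lambda$'' for arbitrary $\lambda\in\Lambda_n$. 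The fix is easy: Lemma~\ref{lemma:previous} holds for every $\Oscr_m$, so run BGG reciprocity in $\Oscr_m$ for $m$ large enough that $\lambda^*,\mu^*\in\Lambda_m$; the conclusion $[\nabla_I(\lambda):L(\mu)]\neq 0\Rightarrow \mu\leq_1\lambda$ is independent of the ambient $\Oscr_m$.

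The paper avoids this detour entirely. For (2b) it argues directly: if $L(\mu)$ is a composition factor of $\nabla_I(\lambda)$ then $\Hom(\nabla_I(\lambda),I(\mu))\neq 0$; by Lemma~\ref{lemma:previous} the injective $I(\mu)$ has a $\nabla_I$-filtration with all factors $\geq_1\mu$, so it suffices to know that $\Hom(\nabla_I(\lambda),\nabla_I(\xi))\neq 0$ forces $\lambda\geq_1\xi$ --- and this was already established inside the proof of Lemma~\ref{lemma:previous}. This stays within $\Lambda_n$ throughout and uses neither BGG reciprocity nor the duality $(-)^*$. Your approach is more structural (it explains (2b) as the ``dual'' of Lemma~\ref{lemma:previous}(3) via BGG), while the paper's is more self-contained and sidesteps the bookkeeping with $\Lambda_n$.
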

\begin{proof}
  If we prove that for~$L(\mu)$ a composition factor of
  $\nabla_I(\lambda)/\socle(\nabla_I(\lambda))$, it follows that~$\mu
  <_1 \lambda$, then from Lemma \ref{lemma:donkincostandard} and the
  fact that we already know
  that~$\nabla_I(\lambda)=\nabla_2(\lambda)$ (so that Lemma \ref{lemma:donkincostandard}(2a,2c) hold), we get
  that~$\nabla_I(\lambda)=\nabla_1(\lambda)$.
Since $L(\mu)$ is a composition factor of $\nabla_I(\lambda)$, $\Hom(\nabla_I(\lambda),I(\mu)) \neq 0$. 
We already know from Lemma~\ref{lemma:previous} that all injectives have a~$\nabla_I$-filtration, and the $\nabla_I$-filtration factors of $I(\mu)$ are all $\geq_1 \mu$, so it will suffice to show that 
\begin{align}
\Hom(\nabla_I(\lambda),\nabla_I(\xi)) \neq 0 \Rightarrow \lambda \geq_1 \xi. 
\end{align}
This was already shown during the proof of Lemma~\ref{lemma:previous}, see~\eqref{homms}.
\end{proof}

	\begin{corollary}
	The coalgebra $\Oscr_n$ is quasi-hereditary with respect to the poset $(\Lambda_n,\leq_1)$. 
	Furthermore~$\nabla_1(\lambda)=\nabla_I(\lambda)$, $\Delta_1(\lambda)=\Delta_I(\lambda)$ for~$\lambda \in \Lambda_n$.
	\end{corollary}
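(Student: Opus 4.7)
The plan is to simply chain together Lemmas \ref{lemma:previous} and \ref{lem:previous2}. By Lemma \ref{lem:previous2} we have $\nabla_I(\lambda)=\nabla_1(\lambda)$ for all $\lambda\in\Lambda_n$. Substituting this identification into conditions (1)--(3) of Lemma \ref{lemma:previous} yields precisely the three conditions of Definition \ref{qh} for $\Oscr_n$ with respect to the poset $(\Lambda_n,\leq_1)$ and costandard comodules $\nabla_1(\lambda)$. This establishes quasi-hereditarity and the identification $\nabla_1(\lambda)=\nabla_I(\lambda)$ simultaneously.

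For the remaining claim $\Delta_1(\lambda)=\Delta_I(\lambda)$, I would mimic the dualization argument used at the end of the proof of Theorem \ref{th:qh}. Recall that by definition $\Delta_I(\lambda)=\nabla_I({}^\ast\!\lambda)^\ast$, where ${}^\ast(-)$ is the inverse of $(-)^\ast$, and that Proposition \ref{prop:Ll} gives $L(\lambda)^\ast\cong L({}^\ast\!\lambda)$. Crucially, the ordering $\leq_1$ is declared at the start of the section to be invariant under $(-)^\ast$, so $\mu<_1{}^\ast\!\lambda$ if and only if $\mu^\ast<_1\lambda$. Hence one can apply the dual version of Lemma \ref{lemma:donkincostandard} (characterizing standard comodules by their top, their radical composition factors, and $\Ext^1$-vanishing) to $\Delta_I(\lambda)=\nabla_I({}^\ast\!\lambda)^\ast$ by dualizing the three conditions already verified for $\nabla_1({}^\ast\!\lambda)=\nabla_I({}^\ast\!\lambda)$. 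This yields $\Delta_1(\lambda)=\Delta_I(\lambda)$.

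Since both halves of the corollary are essentially routine repackagings of the work already done in the two preceding lemmas, there is no serious obstacle; the only point requiring a moment of care is to confirm that $\leq_1$ is indeed $(-)^\ast$-invariant, which is immediate from its generating relations $1<d\delta^{-1}d$ and $\delta<dd$ upon noting that $(d\delta^{-1}d)^\ast=d\delta^{-1}d$ and $(dd)^\ast=d\delta^{-1}d\delta^{-1}\cdot\delta$, making the dualization legitimate.
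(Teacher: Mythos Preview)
Your argument for quasi-heredity and $\nabla_1=\nabla_I$ is exactly the paper's: combine Lemmas~\ref{lemma:previous} and~\ref{lem:previous2} with Definition~\ref{qh}.

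For $\Delta_1=\Delta_I$ you take a different route than the paper. The paper argues via Ext-orthogonality: since $\Oscr_n$ is quasi-hereditary for $\leq_2$ with $\Delta_2=\Delta_I$ and $\nabla_2=\nabla_I=\nabla_1$, one has $\Ext^\ast_{\Oscr_n}(\Delta_I(\lambda),\nabla_1(\mu))=\delta_{\lambda\mu}\,k$, and this together with the (dual) Dlab--Ringel criterion pins down $\Delta_1$. Your dualization approach also works, but needs a bit more care than you give it. The explicit computations at the end are wrong: $(d\delta^{-1}d)^\ast=dd\delta^{-1}$, not $d\delta^{-1}d$, and $(dd)^\ast=d\delta^{-1}d\delta^{-1}$, not $d\delta^{-1}d\delta^{-1}\cdot\delta$; the invariance of $\leq_1$ under $(-)^\ast$ is nonetheless true and is stated in the paper. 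More substantively, $(-)^\ast$ is a contravariant equivalence on $\Oscr_{\tt nc}(\GL_2)$-comodules, not on $\Oscr_n$-comodules (the latter is not a Hopf algebra), and ${}^\ast\!\lambda$ need not lie in $\Lambda_n$. So when you ``dualize the three conditions'', only (2a) and (2b) transfer cleanly, as they concern composition factors. For the Ext condition (2c${}^\prime$) you should instead observe that it already follows from $\Delta_I=\Delta_2$ over $\Oscr_n$ together with the implication $\mu<_1\lambda\Rightarrow\mu<_2\lambda$. With that patch your argument is complete; the paper's Ext-orthogonality approach simply bypasses this bookkeeping.
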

	\begin{proof}
	The fact that $\Oscr_n$ is quasi-hereditary and the equality $\nabla_1(\lambda)=\nabla_I(\lambda)$ follow immediately from Lemmas \ref{lemma:previous},\ref{lem:previous2} and
Definition~\ref{qh}. Since $\Oscr_n$ is quasi-hereditary with respect to $\ge_2$ we have by Definition \ref{definition:drqh} below and Proposition \ref{simples}
\[
\Ext^\ast(\Delta_2(\lambda),\nabla_2(\mu))
=
\begin{cases}
k&\text{if $\lambda=\mu$}\\
0&\text{otherwise}
\end{cases}
\]
Since $\Delta_2(\lambda)=\Delta_I(\lambda)$ by Theorem \ref{th:qh} and $\nabla_2(\mu)=\nabla_I(\mu)=\nabla_1(\mu)$ by Theorem \ref{th:qh} and the previous paragraph.
Hence we get
\[
\Ext^\ast(\Delta_I(\lambda),\nabla_1(\mu))
=
\begin{cases}
k&\text{if $\lambda=\mu$}\\
0&\text{otherwise}
\end{cases}
\]
Using the dual version of Definition \ref{definition:drqh} below we deduce easily from this that $\Delta_1(\lambda)=\Delta_I(\lambda)$.
	\end{proof}

\begin{theorem} 
\label{th:qh}
The coalgebra~$\Oscr_{\tt nc}({{\GL_2}})$ is quasi-hereditary with respect to the poset $(\Lambda,\leq_1)$. Furthermore
one has $\nabla_1=\nabla_I$, $\Delta_1=\Delta_I$.
\end{theorem}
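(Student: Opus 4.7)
The plan is to verify the two conditions of Definition~\ref{strong} directly, using the finite-dimensional result we have just established for each $\Oscr_n$ together with a stabilization argument. Condition~(1), the finiteness of $\pi(\lambda)=\{\mu\in\Lambda:\mu<_1\lambda\}$, was already observed when the ordering $<_1$ was introduced: its defining relations only replace subwords by strictly longer subwords, so $\mu<_1\lambda$ forces $|\mu|\le|\lambda|$ (measured in a suitable length), which leaves only finitely many candidates.

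For condition~(2), fix a finite saturated subset $\pi\subset\Lambda$ and choose $n$ large enough that $\pi\subset\Lambda_n$. Then $\pi$ is still saturated as a subset of $\Lambda_n$, so by the previous corollary together with Theorem~\ref{th:truncation}, the finite-dimensional coalgebra $O_\pi(\Oscr_n)$ is quasi-hereditary with simples $L(\lambda)$, standards $\Delta_I(\lambda)$ and costandards $\nabla_I(\lambda)$ for $\lambda\in\pi$. The key point is then to show that
\[
O_\pi(\Oscr_{\tt nc}({{\GL_2}})) = O_\pi(\Oscr_n).
\]
Since $O_\pi$ commutes with directed unions of subcoalgebras, $O_\pi(\Oscr_{\tt nc}({{\GL_2}}))=\bigcup_{m\ge n}O_\pi(\Oscr_m)$. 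Applying Theorem~\ref{th:truncation} to each $\Oscr_m$ (for $m\ge n$) shows that $O_\pi(\Oscr_m)$ is quasi-hereditary with exactly the same simples, standards and costandards as $O_\pi(\Oscr_n)$. By Brauer--Humphreys reciprocity for quasi-hereditary coalgebras, $[I(\lambda):\nabla(\mu)]=[\Delta(\mu):L(\lambda)]$, so $\dim O_\pi(\Oscr_m)=\sum_\lambda\dim L(\lambda)\dim I(\lambda)$ depends only on this data. Hence the chain $\bigl(O_\pi(\Oscr_m)\bigr)_{m\ge n}$ stabilizes at $O_\pi(\Oscr_n)$, which is therefore finite-dimensional and quasi-hereditary.

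Finally, the identifications $\nabla_1(\lambda)=\nabla_I(\lambda)$ and $\Delta_1(\lambda)=\Delta_I(\lambda)$ follow from the definition of (co)standard comodules in the infinite-dimensional setting: one takes any finite saturated $\pi$ containing $\lambda$, for example $\pi=\pi(\lambda)\cup\{\lambda\}$, and reads off the (co)standards of $O_\pi(\Oscr_{\tt nc}({{\GL_2}}))$. By the stabilization above this coalgebra equals $O_\pi(\Oscr_n)$ for $n$ large enough, whose (co)standards were already identified with $\nabla_I(\lambda)$ and $\Delta_I(\lambda)$.

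The main obstacle is the stabilization step, i.e.\ showing that $O_\pi(\Oscr_m)$ really does not grow when $m$ passes beyond $n$. A priori new composition series of $L(\lambda)$ could appear as one enlarges the ambient coalgebra, enlarging the injective hulls; it is the combination of Theorem~\ref{th:truncation} (which pins down the (co)standards independently of $m\ge n$) with Brauer--Humphreys reciprocity (which pins down the dimensions of the injective hulls) that forces the chain to stabilize. Everything else is a matter of matching definitions.
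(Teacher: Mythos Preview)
Your proof is correct and follows the same overall strategy as the paper: reduce the infinite-dimensional statement to the finite-dimensional result for $\Oscr_n$ via Theorem~\ref{th:truncation}. The paper's own proof is extremely terse---it simply asserts that ``since $\pi$ is finite, it is clear that $\Oscr_\pi\subset\Oscr_n$ for some $n$'' and then invokes Theorem~\ref{th:truncation}. Your argument fills in precisely the step that the paper leaves implicit: why $O_\pi(\Oscr_{\tt nc}(\GL_2))$ is actually contained in some $\Oscr_n$. Your stabilization argument via Brauer--Humphreys reciprocity (the multiplicities $[I(\lambda):\nabla(\mu)]=[\Delta(\mu):L(\lambda)]$ are determined by data independent of $m$) is a clean and standard way to justify this, and is not spelled out in the paper.
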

\begin{proof} 
  To prove the theorem, we need to check that for every finite
  saturated subset $\pi \subset \Lambda$,  $\Oscr_{\tt nc}({{\GL_2}})(\pi)$, is finite dimensional and
  quasi-hereditary, for the poset $(\pi,\leq_1)$.
Since~$\pi$ is finite, it is clear that~$\Oscr_{\pi}
\subset \Oscr_n$ for some~$n$. It now suffices to invoke Theorem \ref{th:truncation}.
\end{proof}

\begin{corollary} The $M(\lambda)$ are (partial) tilting modules.
\end{corollary}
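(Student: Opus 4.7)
The plan is as follows. Recall that a (partial) tilting module in a quasi-hereditary setting is one that simultaneously admits a $\Delta$-filtration and a $\nabla$-filtration. Lemma \ref{lemma:nablafilt} directly provides the $\nabla_I$-filtration on $M(\lambda)$, with $\nabla_I(\lambda)$ on top and lower factors $\nabla_I(\mu)$ for $\mu <_1 \lambda$. Combined with Theorem \ref{th:qh} (which identifies $\nabla_I=\nabla_1$), this already gives the $\nabla$-filtration half of what we need.

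For the $\Delta$-filtration, the plan is to use the duality formalism developed in Section \ref{section:standard}. By \eqref{eq:Miso} we have $M(\lambda)^\ast \cong M(\lambda^\ast)$, and by definition $\Delta_I(\lambda^\ast) = \nabla_I(\lambda)^\ast$. Apply Lemma \ref{lemma:nablafilt} to $M(\lambda^\ast)$ to obtain a $\nabla_I$-filtration
\[
0 = F_0 \subset F_1 \subset \cdots \subset F_r = M(\lambda^\ast)
\]
with $F_i/F_{i-1} \cong \nabla_I(\mu_i)$ for some $\mu_i \in \Lambda$. Dualising this filtration (and using that duality is an exact contravariant functor on finite-dimensional comodules that reverses inclusions), we obtain a filtration of $M(\lambda^\ast)^\ast \cong M(\lambda)$ whose successive quotients are
\[
(F_i/F_{i-1})^\ast \cong \nabla_I(\mu_i)^\ast = \Delta_I(\mu_i^\ast).
\]
Since $\Delta_I = \Delta_1$ by Theorem \ref{th:qh}, this is exactly a $\Delta$-filtration of $M(\lambda)$.

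Putting the two filtrations together, each $M(\lambda)$ has both a $\Delta_1$-filtration and a $\nabla_1$-filtration, which is the definition of a (partial) tilting module in the quasi-hereditary coalgebra $\Oscr_{\tt nc}(\GL_2)$. There is essentially no obstacle here, the work having already been done in Lemma \ref{lemma:nablafilt} and the self-duality of the setup established by \eqref{eq:Miso}; the only minor point to verify is the mechanical fact that dualising a $\nabla$-filtration yields a $\Delta$-filtration, which follows at once from the definition $\Delta_I(\lambda^\ast) := \nabla_I(\lambda)^\ast$.
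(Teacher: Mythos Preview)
Your overall strategy is the same as the paper's: use Lemma \ref{lemma:nablafilt} for the $\nabla$-filtration and its ``dual version'' for the $\Delta$-filtration. However, there is a genuine slip in how you dualise. You claim $M(\lambda^\ast)^\ast \cong M(\lambda)$, but \eqref{eq:Miso} gives $M(\lambda^\ast)^\ast \cong M((\lambda^\ast)^\ast)$, and the paper explicitly warns (just after the definition of $(-)^\ast$ on $\Lambda$) that $(-)^\ast$ is \emph{not} an involution. Concretely, for $\lambda=d$ one has $\lambda^\ast=d\delta^{-1}$ and $(\lambda^\ast)^\ast=\delta d\delta^{-1}$, so $M(d)^{\ast\ast}=RVR^{-1}$, which is $\nabla(\delta d\delta^{-1})$ and is not isomorphic to $V=\nabla(d)$ (they have non-isomorphic simple socles since $d\neq\delta d\delta^{-1}$ in $\Lambda$).

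The fix is immediate: apply Lemma \ref{lemma:nablafilt} to $M({}^\ast\lambda)$ instead, where ${}^\ast(-)$ is the inverse of $(-)^\ast$ (as in the proof of Theorem \ref{th:qh}). Then $M({}^\ast\lambda)^\ast \cong M(({}^\ast\lambda)^\ast)=M(\lambda)$ by \eqref{eq:Miso}, and the right dual of the $\nabla_I$-filtration on $M({}^\ast\lambda)$ yields a filtration of $M(\lambda)$ with subquotients $\nabla_I(\nu_i)^\ast=\Delta_I(\nu_i^\ast)$, as desired. With this correction your argument is complete and matches the paper's.
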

\begin{proof} This follows in the usual way together with the fact
that by Lemma \ref{lemma:nablafilt} and its dual version the $M(\lambda)$ have both a
$\nabla_I$-filtration and a $\Delta_I$-filtration.
\end{proof}

\begin{corollary} Let $\Rep_{\tt nc}({{\GL_2}})$ be the representation ring of $\Oscr_{\text nc}({{\GL_2}})$. 
There is an isomorphism of rings
\[
\ZZ\langle x,y^{\pm}\rangle\rightarrow \Rep_{\tt nc}({{\GL_2}}):x\mapsto [V], y\mapsto [R]
\]
\end{corollary}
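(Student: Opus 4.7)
The plan is to exhibit the given map as a well-defined ring homomorphism that identifies a natural $\ZZ$-basis on each side.

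First, the source $\ZZ\langle x,y^{\pm}\rangle$ is the free associative $\ZZ$-algebra on a generator $x$ and an invertible generator $y$; its standard $\ZZ$-basis $\{\mu_\lambda\}_{\lambda\in\Lambda}$ consists of the reduced monomials
\[
\mu_\lambda := y^{x_1}x^{y_1}y^{x_2}x^{y_2}\cdots y^{x_n}x^{y_n},
\]
one for each $\lambda=\delta^{x_1}d^{y_1}\cdots\delta^{x_n}d^{y_n}\in\Lambda$. Since $[R]\in\Rep_{\tt nc}({{\GL_2}})$ is invertible with inverse $[R^{-1}]$, the assignment $x\mapsto[V]$, $y\mapsto[R]$ extends uniquely to a ring homomorphism $\Phi:\ZZ\langle x,y^{\pm}\rangle\to\Rep_{\tt nc}({{\GL_2}})$, and by construction $\Phi(\mu_\lambda)=[M(\lambda)]$ with $M(\lambda)$ as in Definition~\ref{definition:nabla}.

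Next, by Theorem~\ref{th:qh} the coalgebra $\Oscr_{\tt nc}({{\GL_2}})$ is quasi-hereditary with respect to $(\Lambda,\leq_1)$ and $\nabla_1=\nabla_I$. Applying Lemma~\ref{grothendieck} to each finite quasi-hereditary truncation $\Oscr_n$ and passing to the filtered colimit $\Oscr_{\tt nc}({{\GL_2}})=\bigcup_n\Oscr_n$, the classes $\{[\nabla_I(\lambda)]\}_{\lambda\in\Lambda}$ form a $\ZZ$-basis of $\Rep_{\tt nc}({{\GL_2}})$.

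The key reconciliation of bases comes from Lemma~\ref{lemma:nablafilt}: each $M(\lambda)$ has a filtration whose subquotients are $\nabla_I(\mu)$ with $\mu\in\pi(\lambda)\cup\{\lambda\}$, and $\nabla_I(\lambda)$ occurs with multiplicity one. Therefore
\[
[M(\lambda)]=[\nabla_I(\lambda)]+\sum_{\mu<_1\lambda}c_{\lambda,\mu}[\nabla_I(\mu)]
\]
with $c_{\lambda,\mu}\in\ZZ_{\geq0}$. Because each downset $\pi(\lambda)$ is finite under $<_1$, the transition matrix from $\{[\nabla_I(\lambda)]\}$ to $\{[M(\lambda)]\}$ is strictly unitriangular, hence invertible over $\ZZ$. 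Consequently $\{[M(\lambda)]\}_{\lambda\in\Lambda}$ is also a $\ZZ$-basis of $\Rep_{\tt nc}({{\GL_2}})$.

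Since $\Phi$ is a ring homomorphism sending the $\ZZ$-basis $\{\mu_\lambda\}$ bijectively onto the $\ZZ$-basis $\{[M(\lambda)]\}$, it is an isomorphism. The only real bookkeeping point, and the closest thing to an obstacle, is the bijection between reduced monomials in $x,y^{\pm}$ and elements of $\Lambda$; this is immediate from the fact that $\Lambda$ is the monoid freely generated by $d\in\NN$ and $\delta^{\pm1}\in\ZZ$ under free product, which matches exactly the defining presentation of $\ZZ\langle x,y^{\pm}\rangle$ as a $\ZZ$-algebra.
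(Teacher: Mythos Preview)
Your proof is correct and follows essentially the same approach as the paper: both use Lemma~\ref{grothendieck} (in its infinite-dimensional form, obtained by passing through the $\Oscr_n$) to get that the $[\nabla_I(\lambda)]$ form a $\ZZ$-basis, then invoke Lemma~\ref{lemma:nablafilt} for the unitriangular change of basis to $[M(\lambda)]$, and finally use the multiplicativity $M(\lambda_1\lambda_2)=M(\lambda_1)\otimes M(\lambda_2)$ together with $M(d)=V$, $M(\delta)=R$. Your write-up simply makes explicit the details the paper leaves to the reader.
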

\begin{proof} Using the appropriate infinite dimensional version of Lemma \ref{grothendieck} together with Lemma \ref{lemma:nablafilt} one obtains that $[M(\lambda)]$  for $\lambda\in \Lambda$ is
a basis for $\Rep_{\tt nc}({{\GL_2}})$. It now suffices to note that $M(\lambda_1\lambda_2)=M(\lambda_1)M(\lambda_2)$ and $M(d)=V$, $M(\delta)=R$.
\end{proof}

\section{The simple representations}
\label{section:simples}
Now we assume that $k$ has characteristic zero. From now on we write $\Delta=\Delta_I=\Delta_2=\Delta_1$, $\nabla=\nabla_I=\nabla_2=\nabla_1$.
To study the simple representations of $\Oscr_{\tt nc}({{\GL_2}})$, we use Corollary \ref{simples}, i.e.
$$
L(\ll)=\textnormal{Im}(\Delta(\ll) \to \nabla(\ll)).
$$
Since we already proved that $\Oscr_{\tt nc}({{\GL_2}})$ is quasi-hereditary, the $L(\ll)$ are all the simple representations. We start off by analyzing the map $\Delta(\lambda) \to \nabla(\lambda)$, which was defined as composition 
$$
\Delta(\lambda) \xrightarrow{\pi} M(\lambda) \xrightarrow{\sigma} \nabla(\lambda).
$$
The map $\sigma$ is just the natural quotient map corresponding to 
$$
M(\lambda)=R^{x_1}V^{y_1} \cdots R^{x_n}V^{y_n} \twoheadrightarrow R^{x_1}(S^{y_1}V) \cdots R^{x_n}(S^{y_n}V)=\nabla(\ll),
$$
To understand $\pi$, we first need to understand $\Delta(\lambda)$, which can be accomplished by using the definition from Section~\ref{section:standard}, i.e. first write $\lambda$ as $\mu^*$ and dualize $\nabla(\mu)$. For this, we look at the map
$$
V^y \twoheadrightarrow S^y V,
$$
and dualize, to obtain
$$
(S^y V)^* \hookrightarrow (VR^{-1})^y.
$$
Now define 
$$
T^y(V)\overset{\text{def}}{=}(S^y V)^* R \hookrightarrow VR^{-1}VR^{-1} \cdots VR^{-1} V,
$$
where $V$ appears $y$ times. Then rewrite $M(\lambda)$ as 
$$
R^{s_1}\underbrace{(VR^{-1} \cdots R^{-1} V)}_{t_1 \tn{ times }V} R^{s_2} \underbrace{(VR^{-1} \cdots R^{-1} V)}_{t_2 \tn{ times }V} \cdots R^{s_m}\underbrace{(VR^{-1} \cdots R^{-1} V)}_{t_m \tn{ times }V}, 
$$
for suitable $t_1,t_2,\ldots$. It is easy to see that 
$$
\Delta(\ll)=R^{s_1}(T^{t_1}V) R^{s_2}(T^{t_2}V) \cdots R^{s_m}(T^{t_m}V).
$$
The map $\sigma \circ \pi$ is thus just rewriting words of $M(\ll)$ in a different way. The representation theory of $\GL_2$ yields
$$
T^n V=(S^n V)^* R=S^n V \ot R^{-n+1}
$$ 
(e.g. because both sides are indecomposable representation with the same highest weight). The maps we are thus led to consider are of the form
$$
f_\ll: R^{s_1}(T^{t_1}V) R^{s_2}(T^{t_2}V) \cdots R^{s_m}(T^{t_m}V) \to R^{x_1}(S^{y_1}V) \cdots R^{x_n}(S^{y_n}V).
$$

	\begin{lemma}
	\label{tensor}
	For $\lambda \in \Lambda$ of the (reduced) form $\lambda=\lambda_1 \delta^{i} \lambda_2,$ and $i \neq -1$, 
	one has
	$$
	\image(f_\ll) \cong \image(f_{\lambda_1}) \otimes R^{i} \otimes \image(f_{\lambda_2}).
	$$
	\end{lemma}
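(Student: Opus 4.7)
The plan is to show that all three ingredients of $f_\lambda$---the representation $\Delta(\lambda)$, the middle representation $M(\lambda)$, and the representation $\nabla(\lambda)$---decompose as tensor products compatible with the factorisation $\lambda=\lambda_1\delta^i\lambda_2$, and that the maps $\pi$ and $\sigma$ then split as tensor products of the corresponding maps for $\lambda_1$ and $\lambda_2$ (tensored with the identity on $R^i$). This forces $f_\lambda=f_{\lambda_1}\otimes \id_{R^i}\otimes f_{\lambda_2}$, after which taking images yields the claim since $R^i$ is invertible.

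First, the decomposition $M(\lambda)=M(\lambda_1)\otimes R^i\otimes M(\lambda_2)$ is immediate from the definition, and the decomposition $\nabla(\lambda)\cong \nabla(\lambda_1)\otimes R^i\otimes \nabla(\lambda_2)$ follows because the grouping $R^{x_j}(S^{y_j}V)$ that defines $\nabla$ already separates at every $R$-factor; the quotient map $\sigma$ is then visibly the tensor product $\sigma_{\lambda_1}\otimes \id_{R^i}\otimes \sigma_{\lambda_2}$ since symmetrising $V^{\otimes y_j}\twoheadrightarrow S^{y_j}V$ is local to each $V^{y_j}$-block.

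The key point, and the only place the hypothesis $i\neq -1$ is used, is the decomposition of $\Delta(\lambda)$. Recall that $\Delta(\lambda)$ is obtained by parsing $M(\lambda)$ into blocks of the shape $R^{s_j}(T^{t_j}V)$, where a $T^{t_j}V$-block consists of $t_j$ copies of $V$ separated by \emph{single} factors $R^{-1}$. Since $i\neq -1$, the factor $R^i$ sitting between $M(\lambda_1)$ and $M(\lambda_2)$ in $M(\lambda)$ cannot be absorbed into a $T^tV$-block straddling the boundary: any such block would require the separator between the last $V$ of $M(\lambda_1)$ and the first $V$ of $M(\lambda_2)$ to be a single $R^{-1}$. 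Hence the parsing that defines $\Delta(\lambda)$ restricts to the parsings that define $\Delta(\lambda_1)$ and $\Delta(\lambda_2)$, with the $R^i$ contributing to a separating $R^{s_j}$-factor. This gives $\Delta(\lambda)=\Delta(\lambda_1)\otimes R^i\otimes \Delta(\lambda_2)$ and exhibits $\pi_\lambda=\pi_{\lambda_1}\otimes \id_{R^i}\otimes \pi_{\lambda_2}$.

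Combining, $f_\lambda=(\sigma_{\lambda_1}\circ \pi_{\lambda_1})\otimes \id_{R^i}\otimes (\sigma_{\lambda_2}\circ \pi_{\lambda_2})=f_{\lambda_1}\otimes \id_{R^i}\otimes f_{\lambda_2}$. Because $R^i$ is an invertible object in the tensor category of $\Oscr_{\tt nc}({{\GL_2}})$-comodules, tensoring with $R^i$ is exact and preserves images, so
\[
\image(f_\lambda)\cong \image(f_{\lambda_1})\otimes R^i\otimes \image(f_{\lambda_2}),
\]
as desired. The main (mild) obstacle is the bookkeeping in step three: one has to verify that the canonical embedding $(S^yV)^\ast\hookrightarrow (VR^{-1})^y$ used to define the $T^yV$-blocks is genuinely block-local, so that no $T^tV$-block can secretly cross the $\delta^i$ boundary when $i\neq -1$; this is exactly what the block description of $M(\lambda)$ in the paragraph preceding the lemma guarantees.
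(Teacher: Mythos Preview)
Your proof is correct and follows the same strategy as the paper: show that $M(\lambda)$, $\nabla(\lambda)$, and $\Delta(\lambda)$ all split as tensor products across the $\delta^i$ boundary, so that $f_\lambda=f_{\lambda_1}\otimes\id_{R^i}\otimes f_{\lambda_2}$, and then use that images of $k$-linear maps commute with tensor products. The only (minor) difference is in the verification of the $\Delta(\lambda)$-decomposition: the paper returns to the definition $\Delta(\lambda)=\nabla(\mu)^\ast$ with $\lambda=\mu^\ast$, writes $\mu=\mu_2\delta^{-i}\mu_1$ (where $\lambda_j=\mu_j^\ast$), notes that for $i\neq -1$ the $\delta^{-i}$ survives in the reduced form of $\mu$ so that $\nabla(\mu)=\nabla(\mu_2)R^{-i}\nabla(\mu_1)$, and dualises; you instead use the $T^tV$-block description derived just before the lemma and observe that no $T$-block can straddle a separator $R^i$ with $i\neq -1$---these are two phrasings of the same fact. (One small remark: invertibility of $R^i$ is unnecessary in the last step, since $\image(f\otimes g)=\image(f)\otimes\image(g)$ holds for any $k$-linear maps.)
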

	\begin{proof}
	For $\lambda=\delta^{i}$, it is clear that $\image{f_\ll}=R^{i}$. Now suppose $\lambda=\lambda_1 
	\delta^{i} \lambda_2$, then $\nabla(\ll)=\nabla(\ll_1)  R^i  \nabla(\ll_2)$. Now let $
	\mu_1, \mu_2$ be defined by $\lambda_1=\mu_1^*, \lambda_2=
	\mu_2^*$. Then $\lambda=\mu^*\overset{\text{def}}{=}(\mu_2 \delta^{-i} \mu_1)^*$, and since $i \neq 
	-1$, a non-zero factor of this $\delta^{-i}$ will remain in the reduced form of $\mu$. In other words, no higher powers of $d$ will be created in $\mu$ that were not 
	already present in $\mu_1$ or $\mu_2$. This means that
	$$
	\nabla(\mu)=\nabla(\mu_2) R^{-i} \nabla(\mu_1).
	$$ 
	Remember that $\Delta(\lambda)$ was originally defined as $\nabla(\mu)^*$. In this case we get
	$$
	\Delta(\ll)=	\nabla(\mu)^*=\Delta(\lambda_1) R^i \Delta(\lambda_2).
	$$
	Since the tensor products are over $k$, the image of the tensor product is the tensor product of the 
	images, so
	$$
	\image(f_\ll)=\image(\Delta(\lambda_1) R^i \Delta(\lambda_2) \to \nabla(\ll_1)  R^i  \nabla(\ll_2))=\image(f_{\lambda_1})  R^i \image(f_{\lambda_2}).
	$$
	\end{proof}

From now on, we'll only look at $\lambda \in \Lambda$ that do not contain $\delta^i$, for $i \neq -1$, since Lemma~\ref{tensor} shows that the computations for general $\lambda$ can be reduced to this one. Looking at these maps as $\GL_2$-representations, we see that they are compositions of basic maps 
\begin{equation}
\label{compo}
(S^a V)(S^bV) \to (S^{a-1}V)V(S^{b}V) \to (S^{a-1}V)(S^{b+1}V),
\end{equation}
with an obvious definition.
\begin{lemma}
\label{lemma1}
With $V$ denoting the standard representation for $\GL_2$, a $\GL_2$-map of the form
$$
(S^a V)(S^bV) \xrightarrow{f} (S^{a-1}V)(S^{b+1}V)
$$
factorizing as in~\eqref{compo} is always injective or surjective. More precisely, if $a\geq b+1$, then $f$ is injective, if $a \leq b+1$, it is surjective. In particular, if $a=b+1$ then $f$ is a bijection.
\end{lemma}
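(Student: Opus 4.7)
The plan is to reduce the question to a computation on Clebsch--Gordan isotypic components. Since $k$ has characteristic zero and $\GL_2$ acts reductively, we have the multiplicity-free decompositions
\[
S^a V \otimes S^b V \;\cong\; \bigoplus_{k=0}^{\min(a,b)} S^{a+b-2k}V \otimes R^k, \qquad S^{a-1}V \otimes S^{b+1}V \;\cong\; \bigoplus_{k=0}^{\min(a-1,b+1)} S^{a+b-2k}V \otimes R^k,
\]
where $R = \wedge^2 V$. By Schur's lemma, any $\GL_2$-equivariant map between these acts as a scalar $c_k$ on each isotypic component appearing on both sides, and must vanish on any source component not appearing in the target.

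The crux is to compute the scalars $c_k$. With $\{e_1,e_2\}$ the standard basis of $V$ (with $e_2$ the highest weight vector), the element
\[
v_k \;=\; \sum_{i=0}^{k}(-1)^i \binom{k}{i}\, e_2^{\,a-i}e_1^{\,i} \otimes e_2^{\,b-k+i}e_1^{\,k-i}
\]
is readily checked to be a highest weight vector of weight $a+b-2k$ generating the $k$-th source summand. The first map in~\eqref{compo} is, up to a fixed nonzero scalar, the $(a{-}1,1)$-component of the symmetric coproduct, $e_2^{\,p}e_1^{\,q} \mapsto p\, e_2^{\,p-1}e_1^{\,q}\otimes e_2 + q\, e_2^{\,p}e_1^{\,q-1}\otimes e_1$ (uniqueness comes from $\dim\Hom_{\GL_2}(S^a V, S^{a-1}V\otimes V) = 1$), while the second map is ordinary multiplication in $\operatorname{Sym}^\bullet V$. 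Extracting in $f(v_k)$ the coefficient of the pure monomial $e_2^{\,a-1}\otimes e_2^{\,b-k+1}e_1^{\,k}$ receives contributions only from $i=0$ (yielding $a$) and $i=1$ (yielding $-k$), so this coefficient equals $a-k$. Comparing with the analogous highest weight vector of the target summand, which has coefficient $1$ at the same monomial, yields $c_k = a-k$ up to a uniform nonzero normalization.

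Given $c_k = a-k$, the rest is bookkeeping. The scalar vanishes exactly at $k = a$. When $a \le b+1$, the target range is $k \in \{0,\ldots,a-1\}$ and every such $c_k$ is nonzero; the possibly extra source summand at $k=a$ (present when $a \le b$) lies in the kernel, so $f$ is surjective. When $a \ge b+1$, the source range is $k \in \{0,\ldots,b\} \subseteq \{0,\ldots,a-1\}$, every $c_k$ is nonzero, and $f$ is injective. The two cases meet at $a = b+1$, where $f$ is bijective.

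I expect the main obstacle to be pinning down the exact value $c_k = a-k$ rather than merely knowing $c_k$ is some function of $a,b,k$: both the choice of $v_k$ and the tracking of which monomials survive the composition of coproduct and multiplication require care. A useful sanity check is that the source and target dimensions $(a+1)(b+1)$ and $a(b+2)$ differ by $b+1-a$, which is exactly consistent with surjectivity being plausible iff $a \le b+1$ and injectivity iff $a \ge b+1$.
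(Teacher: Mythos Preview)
Your proof is correct, but it takes a different route from the paper's. The paper realizes $f$ as the restriction to $(S^aV)(S^bV)$ of the differential operator $E=y_1\partial_{x_1}+y_2\partial_{x_2}$ on $A=k[x_1,x_2,y_1,y_2]=\bigoplus_{i,j}S^iV\otimes S^jV$, completes it to an $\mathfrak{sl}_2$-triple $(E,F,H)$ commuting with the $\GL_2$-action, and observes that $H$ acts on $(S^aV)(S^bV)$ as the scalar $b-a$. Standard $\mathfrak{sl}_2$-theory then says $E$ is injective on strictly negative $H$-weight spaces and surjective on nonnegative ones, which immediately gives the result without ever decomposing either side.

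Your approach via Clebsch--Gordan and explicit highest-weight vectors is more hands-on: it requires writing down $v_k$ and tracking monomials through the composite, but in return it yields the precise eigenvalue $c_k=a-k$ of $f$ on each isotypic component, which is finer information than the paper extracts. The paper's $\mathfrak{sl}_2$-trick is slicker and sidesteps the explicit computation entirely, and it is this transversal $\mathfrak{sl}_2$-structure (rather than your direct calculation) that the paper leans on again in Lemma~\ref{lemma2} when checking that the various $E_i$ commute.
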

\begin{proof}
Put $A=k[x_1,x_2,y_1,y_2]=\oplus_{i,j}S^i V \ot S^j V$. Then the map $f$ is the restriction of the $\GL_2$-invariant differential operator on $A$ given by
$$
E=y_1 \frac{\partial}{\partial x_1} + y_2 \frac{\partial}{\partial x_2}.
$$
Now put 
$$
F=x_1 \frac{\partial}{\partial y_1} + x_2 \frac{\partial}{\partial y_2},
$$
such that 
$$
H=[E,F]=\bigg(y_1 \frac{\partial}{\partial y_1} - x_1 \frac{\partial}{\partial x_1}\bigg) + \bigg(y_2 \frac{\partial}{\partial y_2} - x_2 \frac{\partial}{\partial y_2}\bigg).
$$
One can easily check this defines an $\mathfrak{sl}_2$-action on $A$, which is locally finite dimensional, i.e.
$$
A=\oplus_n \oplus_{i+j=n} S^i V \ot S^j V,
$$
and the map $f$ in the statement of the lemma is just the composition
$$
(S^a V)(S^b V) \hookrightarrow \oplus_{i+j=a+b} (S^i V)(S^j V) \xrightarrow{E} \oplus_{i+j=a+b} (S^i V)(S^j V).
$$
Now $E$ acts injectively on the part of $A$ corresponding to strictly negative $H$-eigenvalues, and surjectively for all positive $H$-eigenvalues, since the irreducibles look (up to a shift) like 

$$
\begin{tikzcd} 
\cdots \arrow{r}{E}
\arrow[loop]{r}{H}
    & -2 \arrow{r}{E}
    \arrow[bend left=50]{l}{F}
    \arrow[loop]{r}{H}
    & 0 \arrow[bend left=50]{l}{F}
    \arrow{r}{E}
    \arrow[loop]{rl}{H}
    & + 2 \arrow[bend left=50]{l}{F}
    \arrow{r}{E}
    \arrow[loop]{r}{H}
    & \cdots \arrow[bend left=50]{l}{F}
    \arrow[loop]{r}{H}
\end{tikzcd}
$$

Moreover, the action of $H$ on an element of $(S^a V)(S^b V)$ is just multiplication by $b-a$, so we get that the map is injective if $a>b$ and surjective if $a \leq b$. For $a=b+1$, the dimensions coincide so we have a bijection.
\end{proof}
Let us illustrate the general procedure by means of two examples.
\begin{example}
\label{example}
Let $\lambda=d\delta^{-1}dd$. Then $M(\lambda)=VR^{-1}VV$ and this can be viewed as either $VR^{-1}(VV)$ or as $(VR^{-1}V)V$. We incorporate this into the notation by writing 
$$
\overline{\mkern-48mu\underline{VR^{-1}V}V}
$$
The map we care about is then
$$
(T^2 V) V \to VR^{-1}(S^2 V)
$$
As $\GL_2$-representations, this becomes
$$
(S^2 V) R^{-1} V \to VR^{-1}(S^2 V)
$$
Canceling out the $R$'s, we get the map
$$
(S^2V)V \to V(S^2V),
$$
which just peals off a copy of $V$ on the left and sticks it on the right. More precisely, there is a factorization
$$
(S^2V)V \to VVV \to V(S^2V).
$$
Using the lemma, this map is injective (in fact an isomorphism), so we conclude that $L(\lambda)=(T^2V)V\cong(S^2V)^* R V$. 
\end{example}
\begin{example}
Let $\lambda=d\delta^{-1}d\delta^{-1}d^3$. Then $M(\lambda)$ is given by
$$
\overline{\mkern-95mu\underline{VR^{-1}VR^{-1}V}VV},
$$
so we look at 
$$
(T^3 V)VV \to VR^{-1}VR^{-1}(S^3 V).
$$
This factorizes as
$$
\begin{tikzcd}
	(T^3 V)VV \rar{F} \dar[twoheadrightarrow] & VR^{-1}VR^{-1}(S^3 V) \\
	(T^3 V)(S^2 V) \rar[dashed]{f} & (T^2 V)R^{-1}(S^3V) \uar[hookrightarrow]
\end{tikzcd}
$$
and since the image of $F$ is equal to the image of $f$, we study the map $f$. As $\GL_2$-representations, we see that $f$ becomes 
$$
(S^3 V)(S^2 V) \xrightarrow{f} (S^2V)(S^3V),
$$
again of the form we studied above. This is again injective, so $L(\lambda)=(T^3 V)(S^2V)\cong(S^3V)^* R (S^2 V)$. 
\end{example}
To make the general case manageable, we need the following simple lemma.
\begin{lemma}
\label{lemma2}
If a map 
$$
(S^{a_1} V)(S^{a_2} V)\ldots(S^{a_{2k}}V) \to (S^{a_1 \pm 1}V)(S^{a_2 \mp 2}V)(S^{a_3  \pm 2}V)(S^{a_4 \mp 2}) \ldots (S^{a_{2k} \mp 1}V),
$$
or 
$$
(S^{a_1} V)(S^{a_2} V)\ldots(S^{a_{2k+1}}V) \to (S^{a_1 \pm 1}V)(S^{a_2 \mp 2}V)(S^{a_3  \pm 2}V)(S^{a_4 \mp 2}) \ldots (S^{a_{2k+1} \pm 1}V)
$$
is given by some composition of maps $E_i$ like in Lemma~\ref{lemma1}, the order of composition does not matter.
\end{lemma}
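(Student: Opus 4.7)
The plan is to realize each operator explicitly as a first-order differential operator on the polynomial ring $k[z_1^{(j)}, z_2^{(j)} : 1 \leq j \leq n]$, where $S^{a_j}V$ is identified with the homogeneous polynomials of degree $a_j$ in $z_1^{(j)}, z_2^{(j)}$. Writing $\partial_k^{(j)} = \partial/\partial z_k^{(j)}$, the rightward operator at boundary $i$ is $E_i = \sum_k z_k^{(i+1)} \partial_k^{(i)}$, and its left-moving counterpart (needed to express changes of both signs) is $F_i = \sum_k z_k^{(i)} \partial_k^{(i+1)}$. Each is a generator of an $\mathfrak{sl}_2$-action between the adjacent factors, exactly as in Lemma~\ref{lemma1}.

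First I would analyze the net transfer of degree across each boundary. Letting $\epsilon_i$ denote the net rightward transfer across boundary $i$, conservation at position $i$ gives $c_i = \epsilon_{i-1} - \epsilon_i$ (with $\epsilon_0 = \epsilon_n = 0$), where $c_i$ is the prescribed change in the exponent $a_i$. Plugging in the pattern $c_1 = \pm 1,\, c_2 = \mp 2,\, c_3 = \pm 2,\ldots$ yields $\epsilon_i = \mp(-1)^i$, i.e.\ $|\epsilon_i|=1$ for every boundary, with alternating signs. Consequently, exactly one single-unit operator is applied at each boundary, and the types alternate strictly between $E$ and $F$ at consecutive boundaries.

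Next I would verify that all operators appearing in the composition commute pairwise. Operators at boundaries $i, j$ with $|i-j|\ge 2$ involve disjoint sets of variables, so commutation is immediate. For adjacent boundaries $i$ and $i{+}1$, the crucial observation concerns the shared factor $i{+}1$: an $E_i$ acts there by \emph{multiplication} by $z_k^{(i+1)}$, while an $F_{i+1}$ also acts by \emph{multiplication} (by $z_k^{(i+1)}$); symmetrically, an $F_i$ acts by \emph{differentiation} $\partial_k^{(i+1)}$, and an $E_{i+1}$ likewise acts by differentiation. Since the alternating pattern established in the previous step gives precisely an $\{E_i, F_{i+1}\}$ or $\{F_i, E_{i+1}\}$ pair at each consecutive pair of boundaries, the actions on the shared factor are of the same type (multiplications with multiplications, or derivatives with derivatives) and hence commute, while the actions on the unshared factors involve disjoint variables. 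A direct computation of the bracket $[E_i, F_{i+1}]$ and $[F_i, E_{i+1}]$ confirms vanishing.

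The only substantive point is the sign bookkeeping in the first step: one must see that the alternation $+,-,+,-,\ldots$ of the $c_i$'s together with the endpoint magnitudes $1$ (rather than $2$) forces $|\epsilon_i|=1$ everywhere, which is what couples adjacent boundaries to opposite $E/F$-types; had two consecutive boundaries carried operators of the same type, the bracket $[E_i, E_{i+1}] = -\sum_k z_k^{(i+2)} \partial_k^{(i)}$ would be nonzero and the argument would fail. Granting this, any two compositions of $E_i$'s (in the loose sense including $F_i$'s) realizing the given map are products of the same multiset of pairwise commuting operators, hence coincide.
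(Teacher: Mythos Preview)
Your proof is correct and follows the same approach as the paper's: realize the operators as first-order differential operators on the polynomial ring and verify they commute. The paper's proof only writes out the three-factor case $(S^aV)(S^bV)(S^cV)\to(S^{a-1}V)(S^{b+2}V)(S^{c-1}V)$, observes that the two relevant operators both act by multiplication on the middle factor, and declares the rest ``a straightforward computation''. Your analysis of the net transfers $\epsilon_i$ and the resulting strict alternation of $E$/$F$-types at consecutive boundaries is exactly what makes that ``straightforward computation'' precise in the general case, and your remark that $[E_i,E_{i+1}]\neq 0$ explains why the alternation is genuinely needed. One small point: your final sentence speaks of ``any two compositions \ldots\ realizing the given map'', which taken literally would allow compositions with cancellation (e.g.\ $E_iF_iE_i$); the conclusion $|\epsilon_i|=1\Rightarrow$ one operator per boundary only holds for the minimal-length factorizations that the lemma (and its application in the next theorem) actually considers, so it would be cleaner to say this explicitly.
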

\begin{proof}
This is a straightforward computation. The most interesting case is
\begin{equation}
\label{afbeelding}
(S^a V)(S^b V)(S^c V) \to (S^{a-1}V)(S^{b+2}V)(S^{c-1}V)
\end{equation}
one sets $A=k[x_1,x_2;y_1,y_2;z_1,z_2]=\oplus_{a,b,c} S^a V \ot S^b V \ot S^c V$, 
$$
E_1=y_1 \frac{\partial}{\partial x_1} + y_2 \frac{\partial}{\partial x_2}
$$
$$
E_2=y_1 \frac{\partial}{\partial z_1} + y_2 \frac{\partial}{\partial z_2}.
$$
Then to prove the lemma one needs to check that $E_1$ and $E_2$ commute; this is obvious.
\end{proof}

We now have enough tools to prove the following theorem, where the underlined tensor sign~$\underline{\otimes}$ is multi-valued: it can denote either~$\otimes$ or~$\otimes R^{-1} \otimes$.

\begin{theorem}
\label{th:simplelist}
\begin{enumerate}
\item
	Assume $\lambda \in \Lambda$ does not contain $\delta^i$, for $i \neq -1$.
Then the unique simple 
	representation corresponding to $\ll$ is of the form
	$$
	L(\ll)=T^{a_1}V \underline{\otimes} S^{a_2}V \underline{\otimes} T^{a_3}V \underline{\otimes} \cdots \underline{\otimes} T^{a_n}V,
	$$
	or a similar expression starting and/or ending with $S^a V$. Moreover, in such an expression,
	the exponents of subexpressions have to satisfy certain inequalities:
	$$
	\begin{tabular}{|r|l|}
  	\hline
  	Subexpression & Inequality \\
  	\hline
  	$T^a V $$\ot$$ S^b V$ & $a \geq b+1$ \\
  	$S^b V$$ \ot $$T^a V$ & $a \geq b+1$ \\ 
  	$T^a V$$ \ot R^{-1} \ot$$ S^b V$ & $a+1\leq b$ \\
  	$S^b V$$\otimes R^{-1} \ot$$ T^a V$ & $a+1 \leq b$ \\
  	\hline
	\end{tabular}
	$$
\item
If $\lambda$ is of the form $\lambda_1\delta^i\lambda_2$ with $i\neq 0,-1$ then
\[
L(\ll)=L(\ll_1)\otimes R^i\otimes L(\ll_2)
\]
\end{enumerate}
	\end{theorem}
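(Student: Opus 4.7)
Part~(2) is essentially immediate: by Lemma~\ref{tensor} we have $\image(f_\lambda) \cong \image(f_{\lambda_1}) \otimes R^i \otimes \image(f_{\lambda_2})$, and since $L(\lambda) = \image(f_\lambda)$ and similarly for $\lambda_1, \lambda_2$, the stated tensor decomposition follows. So the real work is Part~(1), where we may assume $\lambda$ is a word in $d$ and $\delta^{-1}$. Write $\lambda = \delta^{\epsilon_0} d^{b_1} \delta^{-1} d^{b_2} \delta^{-1} \cdots \delta^{-1} d^{b_m} \delta^{\epsilon_1}$ with $b_i \geq 1$ and $\epsilon_j \in \{0,-1\}$; correspondingly $\nabla(\lambda)$ is built from the blocks $S^{b_i}V$ separated by $R^{-1}$, while $\Delta(\lambda)$ is built from blocks $T^{t_j}V$ corresponding to the maximal alternating sub-patterns $V R^{-1} V R^{-1}\cdots R^{-1} V$ inside $M(\lambda)$.

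The key is to factor $f_\lambda$ as a long composition of elementary maps of the form \eqref{compo}, each acting on a single adjacent pair of symmetric power factors (possibly separated by $R^{-1}$). By Lemma~\ref{lemma2} we may apply them in whatever order is most convenient, and by Lemma~\ref{lemma1} each such map is either injective (if the left exponent exceeds the right one by enough), surjective (in the opposite regime), or bijective (at the borderline). The plan is to run the reduction greedily: whenever a local pair $(S^aV)(S^bV)$ (or $(S^aV) R^{-1} (S^bV)$) fails the appropriate inequality, apply the basic map -- by Lemma~\ref{lemma1} it is surjective, so it genuinely reduces the expression; conversely if the inequality holds, the map is injective and that local factor is ``frozen'' at the current value. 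Iterating this process, $\image(f_\lambda)$ identifies with a product in which every adjacent pair satisfies the tabulated inequality.

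To see that this procedure recovers exactly the claimed $T$/$S$ pattern, one must track the bookkeeping carefully: each maximal $VR^{-1}\cdots R^{-1}V$ run in $M(\lambda)$ begins life as a $T^{t_j}V$-block in $\Delta(\lambda)$; whenever the symmetrization of an adjacent $V^{b_i}$-block pulls a $V$ across the boundary, the corresponding $T^{t_j}V$ is shrunk by one unit. After all reductions, what remains is an alternating expression of $T$'s and $S$'s linked either directly by $\otimes$ or by $\otimes R^{-1} \otimes$, exactly as in the table. The inequalities are then automatic from the termination condition: the reduction stopped precisely because any further application of a basic map would fail to be surjective, which by Lemma~\ref{lemma1} means the local exponents satisfy the conditions in the table.

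The main obstacle is the combinatorial bookkeeping involved in showing that the greedy reduction is well-defined (independent of the order of local moves) and that the resulting tensor expression has the correct global form. Lemma~\ref{lemma2} handles commutativity of the elementary moves, and the borderline ``bijective'' case $a=b+1$ (or $a+1=b$ across an $R^{-1}$) ensures there is no essential ambiguity: whether we view a borderline factor on the $T$-side or the $S$-side, the underlying $\GL_2$-representation is the same. Putting these observations together yields the stated description of $L(\lambda)$ together with the inequalities.
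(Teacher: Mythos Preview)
Your proposal is correct and follows essentially the same route as the paper: reduce via Lemma~\ref{tensor} to words in $d$ and $\delta^{-1}$, factor $f_\lambda$ into the elementary maps~\eqref{compo}, use Lemma~\ref{lemma2} to reorder them freely, and then run a greedy ``surjections first, injections last'' pass governed by the injective/surjective dichotomy of Lemma~\ref{lemma1}. The paper carries out exactly this algorithm, first illustrating the three-factor case in detail and then sketching why the greedy procedure terminates in the correct codomain (handling the borderline bijective case $a=b+1$ just as you do); both treatments leave the final combinatorial bookkeeping at roughly the same level of informality.
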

	\begin{proof}
Number (2) is just a rephrazing of Lemma~\ref{tensor}. For (1), let us first consider the following special situation:
\begin{equation}
\label{voor}
\underbrace{VR^{-1} \cdots VR^{-1}}_{(a-1) \times V} \overbrace{VV\cdots VV}^{(b+2) \times V} \underbrace{R^{-1}V \cdots R^{-1}V}_{(c-1) \times V}.
\end{equation}
The diagram corresponding to this representation is then
$$
\begin{tikzcd}
	(T^a V) V^{b} (T^c V) \rar \dar[twoheadrightarrow] & (VR^{-1})^{a-1} (S^{b+2} V) (R^{-1} V)^{c-1} \\
	(T^a V) (S^{b}V)(T^c V) \rar[dashed] & (T^{a-1}V)R^{-1}(S^b V)R^{-1}(T^{c-1} V) \uar[hookrightarrow]  
\end{tikzcd}
$$
As $\GL_2$-representations, this becomes a map like~\eqref{afbeelding}, i.e.
\begin{equation}
\label{afbeelding2}
(S^a V)(S^b V)(S^c V) \xrightarrow{\delta} (S^{a-1}V)(S^{b+2}V)(S^{c-1}V).
\end{equation}
We cannot directly use Lemma~\ref{lemma1}, because we have three factors. Now~\eqref{afbeelding2} has two possible factorizations:
$$
\begin{tikzcd}
	(S^a V)(S^b V)(S^c V) \rar{f} \dar[red]{E_1} \drar[blue] & (S^{a-1}V)(S^{b+2}V)(S^{c-1}V) \\
	(S^{a-1}V)(S^{b+1}V)(S^{c}V) \urar[red,crossing over]{E_2} & (S^a V)(S^{b+1}V)(S^{c-1}V) \uar[blue]{E_1} 
\end{tikzcd}
$$
and by Lemma~\ref{lemma2}, we know the specific factorization is of no importance. To be able to compute the image (and hence the corresponding simple), we want that in at least one of the two factorizations, there are
\begin{enumerate}
	\item Two surjections
	\item A surjection followed by an injection
	\item Two injections
\end{enumerate}
Indeed, in those cases the images are:
\begin{enumerate}
	\item $\tn{Im}(f)=(S^{a-1}V)(S^{b+2}V)(S^{c-1}V)$
	\item ${\color{red}\tn{Im}(f)}= (S^{a-1}V)(S^{b+1}V)(S^c V)$, ${\color{blue}\tn{Im}(f)}= (S^a V)(S^{b+1}V)(S^{c-1}V)$
	\item $\tn{Im}(f)=(S^{a}V)(S^{b}V)(S^{c}V)$
\end{enumerate}
It remains to check that at least one of the two factorizations falls into one of these three classes. This is a simple numerical check based on Lemma~\ref{lemma1}. 
\begin{remark}
Note that for maps of the form $(S^a V)(S^b V) \to (S^{a+1}V)(S^{b-1}V)$, the inequalities in the lemma have to be reversed, i.e. the map is injective if $a+1 \leq b$ and surjective if $a+1 \geq b$.
\end{remark}

For the general case where we don't have $3$ (different type) factors like in~\eqref{voor}, but any number of them, the corresponding map of $\GL_2$-representations will be a composition of a number of differential operators of the form 
$$
E=y_1 \frac{\partial}{\partial x_1} + y_2 \frac{\partial}{\partial x_2}.
$$
More precisely, we get two of these operators for each factor of type $VR^{-1}VR^{-1} \cdots R^{-1}V$; one of these peels off a copy of $V$ and puts it on the left, and the other one puts it on the right. The $y$-variables correspond to the symmetric power coming from $VV \cdots VV$, and the $x$-variables to the symmetric power coming from $VR^{-1}VR^{-1} \cdots R^{-1}V$. These operators still commute (this is again Lemma~\ref{lemma2}) so we can factor these maps in any way we like as a composition of, say $m$, basic maps. 

A factorization allowing us to compute the image is now one given by $k \geq 0$ consecutive surjections followed by $m-k$ consecutive injections, and comes with a set of inequalities that the exponents of the symmetric powers have to satisfy for it to occur. The corresponding simple representation is then given by the lift of the tensor product of symmetric powers appearing as the codomain of the last surjective map and is thus of the form we want. 

What remains to be checked is that the systems of inequalities cover all occurring cases, i.e. nice factorizations always exist. The $\GL_2$-maps can be represented by exponent tuples as follows
\begin{equation}
\label{vorrr}
(a_1 \vert b_1 \vert \cdots \vert a_n \vert b_n \vert a_{n+1}) \to (a_1 -1 \vert  b_1+2 \vert a_2 -2 \vert b_2+2 \vert \cdots \vert b_n +2 \vert  a_{n+1}-1),
\end{equation}
so there are $2n$ basic maps to start with. Pick a surjective one (possibly bijective), apply it, and keep on applying surjective ones until we are in a situation where all basic maps one can apply are injective (and not surjective). Now keep on applying injective maps. A priori there is the problem that applying a basic map changes an $a_i$, so the algorithm we just described might not end up in the codomain of~\eqref{vorrr}. 

This does not happen, and we will be content with describing a representative example. Look at the map
$$
(a_1 \vert b_1 \vert a_2 \vert b_2 \vert a_3) \to (a_1-1 \vert b_1+2 \vert a_2-2 \vert b_2 +2 \vert a_3 -1),
$$
and suppose all the basic maps are injections (not surjections). In particular we have $a_2 > b_2 +1$. Then by the $3$-factor considerations we made earlier $(a_1 \vert b_1 \vert a_2) \to (a_1-1 \vert b_1+2 \vert a_2-1)$ can be factorized as two injections. One then has to factorize $(a_2-1 \vert b_2 \vert a_3) \to (a_2-2 \vert b_2+2 \vert a_3-1)$, and it could happen, that $a_2-1 \ngtr b_2+1 $. For this however, it is necessary that $a_2=b_2+2$ and thus the corresponding basic map is a bijection, so there is no problem and the algorithm's fine.  
\end{proof}

	\begin{corollary}
\label{cor:simple}
	All simple $\Oscr_{\tt nc}(\GL_2)$-representations are repeated tensor products of simple $\Oscr	
	(M_2)$ representations and their duals. 
	\end{corollary}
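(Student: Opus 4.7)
The proof is essentially a direct translation of the explicit description of $L(\lambda)$ given in Theorem \ref{th:simplelist}, combined with the classification of simple $\Oscr_{\tt nc}(M_2)$-representations from \cite{kriegk-vandenbergh}. My plan is to identify each ``building block'' appearing in the formula for $L(\lambda)$ as either a simple $\Oscr_{\tt nc}(M_2)$-representation or the dual of one, and then observe that the formula itself expresses $L(\lambda)$ as a tensor product of such building blocks.

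First, by Theorem \ref{th:simplelist}(2), if $\lambda=\lambda_1\delta^i\lambda_2$ with $i\ne 0,-1$ then $L(\lambda)=L(\lambda_1)\otimes R^i\otimes L(\lambda_2)$. Iterating, one reduces to the case where the only powers of $\delta$ appearing in $\lambda$ are $\delta^{-1}$, at the cost of introducing factors of the form $R^i$ for $i\in\ZZ$. Since $R\cong\wedge^2 V$ is a simple $\Oscr_{\tt nc}(M_2)$-representation (by \cite{kriegk-vandenbergh}) and $R^{-1}\cong R^\ast$, each such $R^i$ is a repeated tensor product of the simple $\Oscr_{\tt nc}(M_2)$-representation $\wedge^2 V$ and its dual.

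In the reduced case (no $\delta^i$ for $i\ne -1$), Theorem \ref{th:simplelist}(1) exhibits $L(\lambda)$ as a tensor product whose factors are of three types: $S^a V$, $T^a V$, and $R^{-1}$. The factors $S^a V$ are simple $\Oscr_{\tt nc}(M_2)$-representations by \cite{kriegk-vandenbergh}; the factor $R^{-1}$ is the dual of the simple $\Oscr_{\tt nc}(M_2)$-representation $\wedge^2 V$; and finally, from the identity $T^a V = (S^a V)^\ast \otimes R$ derived in the paragraph preceding Lemma \ref{tensor}, each $T^a V$ is itself a tensor product of the dual $(S^a V)^\ast$ of a simple $\Oscr_{\tt nc}(M_2)$-representation and the simple $\Oscr_{\tt nc}(M_2)$-representation $R=\wedge^2 V$.

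Putting these observations together, every factor in the expression for $L(\lambda)$ given by Theorem \ref{th:simplelist} is either a simple $\Oscr_{\tt nc}(M_2)$-representation or the dual of one, so $L(\lambda)$ itself is a repeated tensor product of such, proving the corollary. There is no real obstacle here: once Theorem \ref{th:simplelist} has done the heavy lifting of identifying $L(\lambda)$ with an explicit $\GL_2$-representation, the corollary is a formal consequence of the formula $T^n V = (S^n V)^\ast \otimes R$ and the known list of simple $\Oscr_{\tt nc}(M_2)$-representations.
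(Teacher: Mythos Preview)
Your proof is correct and matches the paper's intended argument: the corollary is stated without proof in the paper, as an immediate consequence of Theorem~\ref{th:simplelist} together with the classification of simple $\Oscr_{\tt nc}(M_2)$-representations from \cite{kriegk-vandenbergh}, exactly as you spell out. The only minor quibble is bibliographic: the identity $T^n V=(S^n V)^*R$ is actually the \emph{definition} of $T^n V$ (given a few lines before Lemma~\ref{tensor}), not something derived there.
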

	
Here's one more example to clarify the theorem. It gives a situation where $\Delta(\ll) \to \nabla(\ll)$ is neither an epimorphism nor a monomorphism.

	\begin{example}
	Let $\ll=d\delta^{-1}d^4(\delta^{-1}d)^4$. One reduces the problem to computing the image of $f$:
	$$
	\begin{tikzcd}
	(T^2 V)VV(T^5 V) \rar{F} \dar[twoheadrightarrow] & VR^{-1}(S^4 V)(R^{-1}V)^4 \\
	(T^2 V)(S^2 V)(S^5 V) \rar[dashed]{f} & VR^{-1}(S^4 V)R^{-1}(T^4 V) \uar[hookrightarrow]
	\end{tikzcd}
	$$
	As $\GL_2$-representations we have the factorization
	$$
	(S^2 V)(S^2 V)(S^5V) \twoheadrightarrow V(S^3 V)(S^5V) \hookrightarrow V(S^4 V)(S^4V),
	$$
	and neither of the arrows is an isomorphism. Thus, 
	$L(\ll)=VR^{-1}(S^3V)(T^5V)$, so is of the form~$T^1V \underline{\otimes} S^3V \underline{\otimes} T^5V$. 
	\end{example}

	\begin{corollary}
	Amongst the expressions in the theorem, one has the following isomorphisms
	\begin{align*}
	T^aV \ot S^{a-1}V \cong T^{a-1}V \ot R^{-1} \ot S^aV \\
	S^{a-1}V \ot T^aV \cong S^aV \ot R^{-1} \ot T^{a-1}V.
	\end{align*}
	\end{corollary}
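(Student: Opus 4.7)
The plan is to show that in each of the displayed isomorphisms, the two sides are two valid presentations of $L(\lambda)$ for one and the same $\lambda\in\Lambda$, and therefore must be isomorphic by the uniqueness of simple comodules in the quasi-hereditary structure (Theorem~\ref{th:qh}).

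For the first isomorphism, I would begin by unwinding the tensor expressions as words in $V$ and $R^{\pm 1}$, exactly as in the proof of Theorem~\ref{th:simplelist}. The factor $T^a V$ sits inside $V(R^{-1}V)^{a-1}$ and the factor $S^b V$ is a quotient of $V^b$, so the two sides $T^a V\otimes S^{a-1}V$ and $T^{a-1}V\otimes R^{-1}\otimes S^a V$ sit respectively inside
\begin{equation*}
V(R^{-1}V)^{a-1}\cdot V^{a-1} \quad\text{and}\quad V(R^{-1}V)^{a-2}\cdot R^{-1}\cdot V^a.
\end{equation*}
A direct comparison shows these are the same word, and substituting $V\mapsto d$, $R^{\pm 1}\mapsto \delta^{\pm 1}$ yields the common weight $\lambda=(d\delta^{-1})^{a-1}d^a$. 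Moreover both expressions satisfy the inequality table of Theorem~\ref{th:simplelist} with equality (the first requires $a\geq (a-1)+1$, the second $(a-1)+1\leq a$), which is precisely the boundary case $a=b+1$ of Lemma~\ref{lemma1}, where the basic $\GL_2$-map $(S^aV)(S^{a-1}V)\to(S^{a-1}V)(S^aV)$ is a bijection. The two different ways of factoring the composition $\Delta(\lambda)\to\nabla(\lambda)$ through this bijection (absorbing it into either the surjective or the injective phase) give rise to the two listed forms; both thus equal $L(\lambda)$, proving the first isomorphism.

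The second isomorphism follows by the same argument with $\lambda := d^a(\delta^{-1}d)^{a-1}$, whose underlying word is $V^a(R^{-1}V)^{a-1}$ in both presentations. The only step that requires care is the combinatorial identification of the two concatenated words as the same string, which is the main (though not serious) obstacle; once this is in place, the rest of the argument reduces to pointing to Theorem~\ref{th:simplelist} and the uniqueness of $L(\lambda)$.
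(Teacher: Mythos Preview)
Your argument is correct and rests on the same key point as the paper's proof, namely the boundary case of Lemma~\ref{lemma1}: when $a=b+1$ the basic $\GL_2$-map $(S^aV)(S^{a-1}V)\to (S^{a-1}V)(S^aV)$ is a bijection. The paper simply says ``follows immediately from Lemma~\ref{lemma1}'', meaning that this bijection, once lifted by reinserting the appropriate $R^{-1}$ factors, is itself an $\Oscr_{\tt nc}(\GL_2)$-comodule isomorphism between the two sides. Your route is the same idea dressed in slightly more machinery: you identify the common underlying word $M(\lambda)$, observe that the bijective basic map can be absorbed into either the surjective or the injective phase of the factorization algorithm in the proof of Theorem~\ref{th:simplelist}, and conclude that both expressions realise $L(\lambda)$ for the same $\lambda$. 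This is a perfectly valid (and arguably more informative) way to see the isomorphism, but note that you do not actually need the uniqueness of $L(\lambda)$ as an abstract simple: the bijection already furnishes an explicit isomorphism.
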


The corollary again follows immediately from Lemma~\ref{lemma1}. In particular, this implies that $L(d\delta^{-1}d^2)$ from Example\ref{example} is a tilting object, as the isomorphism implies it is a direct summand of $M(d\delta^{-1}d^2)$. Of course the same holds for $V(T^2V)$. 

\appendix
\section{Comparing the definitions of Dlab-Ringel and Donkin}
\label{appA}
In this paper we use  results from both \cite{dlab-ringel-2} and
\cite{donkin}. As even the basic definitions in those papers are
different, one needs to be  careful transfering results
between them. In this appendix we verify for the benefit of the non-expert reader that
 the two theories are the same.

To be compatible with the rest of the paper we work in the coalgebra setting.
Remember that sending $A$ to $A^\ast$ yields a duality finite dimensional~$k$-algebras and finite
dimensional~$k$-coalgebras.
For a fixed finite dimensional
algebra~$A$, there is an isomorphism between
the categories of finite dimensional right~$A$-modules and finite dimensional
left~$A^*$-comodules which is the identity on the underlying vector
spaces. As before a representation is a finite dimensional comodule.

Let $C$ be a finite dimension coalgebra over $k$.
 Fix a poset~$(\Lambda, \leq)$ such that~$\{L(\lambda) \vert \lambda
 \in \Lambda\}$ is a complete set of non-zero, pairwise non-isomorphic
 simple~$C$-comodules. Let~$I(\lambda)$ denote the injective hull
 of~$L(\lambda)$ and let~$P(\lambda)$ denote its projective cover,
 which exists since~$C$ is finite dimensional. The multiplicity of a
 simple comodule~$L(\lambda)$ as a composition factor of the
 representation~$V$ will be denoted~$[V:L(\lambda)]$.

For~$\pi \subset \Lambda$, and~$V$ a~$C$-representation, denote
by~$O_{\pi}(V)$ the unique maximal
subcomodule of~$V$ that has all composition factors indexed by
elements of~$\pi$. Dually~$O^{\pi}(V)$ is the unique minimal
subcomodule~$U$ of~$V$ such that~$V/U$ has
all composition factors indexed by elements of~$\pi$. 
For any~$\lambda \in \Lambda$, set
\begin{equation}
\begin{aligned}
\pi^<(\lambda) &= \{\mu \in \Lambda \vert  \mu < \lambda\} \\
\pi^{\leq}(\lambda) &= \{\mu \in \Lambda \vert \mu \leq \lambda\}.
\end{aligned}
\end{equation}
\subsection{Donkin quasi-hereditary coalgebras} Here we give the
definitions used by Donkin in~\cite{donkin}.  For clarity we will
decorate notations with a subscript ``$D$'' (no such subscript was
used in the body of the paper). Also for clarity we will repeat some
definitions and results already stated in the main text.
\begin{definition}
\label{definition:dcostandard}
The comodule~$\nabla_{\tt D}(\lambda)$ is defined as the unique subcomodule of~$I(\lambda)$ containing~$L(\lambda)$ such that
\begin{equation}
\nabla_{\tt D}(\lambda)/L(\lambda)=O_{\pi^<(\lambda)}(I(\lambda)/L(\lambda)).
\end{equation}
\end{definition}
Denote by~$N(\lambda)$ the maximal strict subcomodule of~$P(\lambda)$.
\begin{definition}
\label{definition:dstandard}
The comodule~$\Delta_{\tt D}(\lambda)$ is defined  as
\begin{equation}
\Delta_{\tt D}(\lambda)=P(\lambda)/O^{\pi^<(\lambda)}(N(\lambda)).
\end{equation}
\end{definition}
Comodules isomorphic to $\Delta_{\tt D}(\lambda)$, $\nabla_{\tt D}(\lambda)$ will be called D(onkin)-standard and costandard comodules respectively. By
the triangular nature of the definition we see that both $[\Delta_{\tt D}(\lambda)]$ and $[\nabla_{\tt D}(\lambda)]$ yield bases of $K_0(C)$.
\begin{example}
Consider the algebra~$A=k[x]/(x^2)$. Then~$\Lambda=\{1\}$,~$P(1)=I(1)=A$, and~$L(1)=k[x]/(x)$. One checks immediately that~$\nabla_{\tt D}(1)=\Delta_{\tt D}(1)=L(1)$.
\end{example}
The costandard comodules can be characterised as follows. 
\begin{lemma}
For any~$C$-comodule~$V$, and~$\lambda \in \Lambda$, the following are equivalent:
\begin{enumerate}
\item $V \cong \nabla_{\tt D}(\lambda)$,
\item the following three conditions are satisfied:
\begin{enumerate}
\item $\socle(V) \cong L(\lambda)$,
\item if~$[V/\socle(V):L(\mu)] \neq 0$, then~$\mu < \lambda$,
\item if~$\mu < \lambda$, then $\Ext^1(L(\mu),V)=0$.
\end{enumerate}
\end{enumerate}
\end{lemma}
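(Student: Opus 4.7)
The plan is to prove both implications by socle-based arguments inside the injective hull $I(\lambda)$, in the spirit of the classical Dlab--Ringel proof but translated to the coalgebra setting.

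For the direction $(1)\Rightarrow(2)$, conditions (a) and (b) are essentially immediate from Definition~\ref{definition:dcostandard}: since $\nabla_{\tt D}(\lambda)\subseteq I(\lambda)$ and $\socle(I(\lambda))=L(\lambda)$ we have $\socle(\nabla_{\tt D}(\lambda))=L(\lambda)$, and $\nabla_{\tt D}(\lambda)/L(\lambda)=O_{\pi^<(\lambda)}(I(\lambda)/L(\lambda))$ has composition factors indexed by $\pi^<(\lambda)$ by construction. The main content is (c). Given $\mu<\lambda$ and an extension
\begin{equation}
0\longrightarrow \nabla_{\tt D}(\lambda)\longrightarrow W\longrightarrow L(\mu)\longrightarrow 0,
\end{equation}
I would analyse $\socle(W)$. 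The plan is to show $\socle(W)\supsetneq L(\lambda)$: if $\socle(W)=L(\lambda)$ then $W$ embeds into $I(\lambda)$ extending the inclusion $\nabla_{\tt D}(\lambda)\hookrightarrow I(\lambda)$, so $W/L(\lambda)\subseteq I(\lambda)/L(\lambda)$ and has all composition factors in $\pi^<(\lambda)$, contradicting the maximality in the definition of $\nabla_{\tt D}(\lambda)$. Any extra simple summand of $\socle(W)$ not lying in $\nabla_{\tt D}(\lambda)$ must map isomorphically into the quotient $L(\mu)$, hence splits the sequence.

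For the direction $(2)\Rightarrow(1)$, condition (a) ensures an embedding $V\hookrightarrow I(\lambda)$. Using (b), the image of $V/L(\lambda)$ inside $I(\lambda)/L(\lambda)$ has composition factors only from $\pi^<(\lambda)$, so $V/L(\lambda)\subseteq O_{\pi^<(\lambda)}(I(\lambda)/L(\lambda))$ and hence $V\subseteq \nabla_{\tt D}(\lambda)$. To prove the reverse inclusion, I would argue by contradiction: if $V\subsetneq \nabla_{\tt D}(\lambda)$, pick $U$ with $V\subsetneq U\subseteq \nabla_{\tt D}(\lambda)$ minimal, so that $U/V\cong L(\mu)$ for some $\mu<\lambda$ (the index $\mu<\lambda$ coming from condition (b) applied to $\nabla_{\tt D}(\lambda)$, which satisfies (2) by the first half of the lemma). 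Condition (c) then forces the exact sequence $0\to V\to U\to L(\mu)\to 0$ to split, producing a simple subcomodule $L(\mu)\subseteq U\subseteq I(\lambda)$ disjoint from $V$. This contradicts $\socle(I(\lambda))=L(\lambda)$, completing the argument.

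The delicate step, and the main obstacle, is the first-half verification of (c): one must correctly identify that a non-split extension forces the socle of $W$ to exceed $L(\lambda)$, and that the resulting embedding of $W$ into a sum of injectives compatibly extends the original embedding of $\nabla_{\tt D}(\lambda)$ into $I(\lambda)$. Once (c) is in hand, the second implication is a straightforward maximality argument. No statements beyond those already in this appendix and the elementary fact $\socle(I(\lambda))=L(\lambda)$ are needed.
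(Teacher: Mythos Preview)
Your argument is correct. The paper does not actually supply a proof of this lemma: it is stated both in \S\ref{section:finite-dimensional-quasihereditary} and again in the appendix, each time without proof, with the attribution that it is the coalgebraic version of \cite[Lemma~1.1]{dlab-ringel-2} adapted to Donkin's conventions. Your write-up is precisely that adaptation, and the steps go through as you indicate.

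A couple of minor points of presentation you may wish to tighten. In $(1)\Rightarrow(2)$(c), the sentence ``The plan is to show $\socle(W)\supsetneq L(\lambda)$'' is slightly misleading: what you are really proving is that \emph{every} extension splits, by showing that the case $\socle(W)=L(\lambda)$ is impossible (via maximality of $O_{\pi^<(\lambda)}$) and that the case $\socle(W)\supsetneq L(\lambda)$ forces a splitting. It would read more cleanly phrased that way. Also, in $(2)\Rightarrow(1)$, when you assert $\mu<\lambda$ for the simple quotient $U/V\cong L(\mu)$, it is worth making explicit that $[\nabla_{\tt D}(\lambda):L(\lambda)]=1$ (immediate from Definition~\ref{definition:dcostandard}) and that $L(\lambda)\subseteq V$, so $L(\lambda)$ cannot reappear in $\nabla_{\tt D}(\lambda)/V$; this is the reason $\mu\neq\lambda$. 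You allude to this but could state it directly rather than via ``condition (b) applied to $\nabla_{\tt D}(\lambda)$''.
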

Denote by~$\Fscr(\nabla_{\tt D})$, respectively~$\Fscr(\Delta_{\tt D})$, the categories of~$C$-representations that have a filtration with costandard, respectively standard, subquotients. We will call these (co)standard filtrations. Denote by~$(V:\nabla_{\tt D}(\lambda))$ the multiplicity of~$\nabla_{\tt D}(\lambda)$ in a costandard filtration of~$V \in \Fscr(\nabla_{\tt D})$. This number is independent of the filtration
by the fact that costandard comodules form a basis for $K_0(C)$ (see above).
\begin{definition}
\label{definition:dqh}
The coalgebra~$C$ is D(onkin)-quasi-hereditary if for $\lambda$ in $\Lambda$
\begin{enumerate}
\item $I(\lambda) \in \Fscr(\nabla_{\tt D})$,
\item $(I(\lambda):\nabla_{\tt D}(\lambda))=1$,
\item if $(I(\lambda):\nabla_{\tt D}(\mu))\neq 0$ then~$\mu \geq \lambda$.
\end{enumerate}
\end{definition}
\subsection{Dlab-Ringel quasi-hereditary coalgebras} All definitions and results in~\cite{dlab-ringel-2} are for finite dimensional algebras over an arbitrary field, which we transpose to finite dimensional coalgebras over~$k$.
\begin{definition}
The comodule~$\nabla_{\tt DR}(\lambda)$ is defined as the maximal subcomodule of~$I(\lambda)$ with composition factors~$L(\mu)$, for~$\mu \leq \lambda$. In~$O$-notation, this becomes
\begin{equation}
\nabla_{\tt DR}(\lambda)=O_{\pi^{\leq}(\lambda)}(I(\lambda)).
\end{equation}
\end{definition}
\begin{definition}
The comodule~$\Delta_{\tt DR}(\lambda)$ is defined as the maximal factor comodule of~$P(\lambda)$ with composition factors~$L(\mu)$, for~$\mu \leq \lambda$. In~$O$-notation, this becomes
\begin{equation}
\Delta_{\tt DR}(\lambda)=P(\lambda)/O^{\pi^{\leq}(\lambda)}(P(\lambda)).
\end{equation}
\end{definition}
These comodules (up to isomorphism) are the D(lab)-R(ingel)-(co)standard comodules.
\begin{example}
Consider~$A=k[x]/(x^2)$ again. Then~$\nabla_{\tt DR}(1)=\Delta_{\tt DR}(1)=A$, so these differ from~$\nabla_{\tt D}(1)$ and~$\Delta_{\tt D}(1)$.
\end{example}
The DR-costandard comodules may be characterized as follows.
\begin{lemma}
\label{lemma:costandard}
For any~$C$-representation~$V$, and~$\lambda \in \Lambda$, the following are equivalent:
\begin{enumerate}
\item $V \cong \nabla_{\tt DR}(\lambda)$,
\item the following three conditions are satisfied:
\begin{enumerate}
\item $\socle(V) \cong L(\lambda)$,
\item if~$[V:L(\mu)] \neq 0$, then~$\mu \leq \lambda$,
\item if~$\mu \leq \lambda$, then $\Ext^1(L(\mu),V)=0$.
\end{enumerate}
\end{enumerate}
\end{lemma}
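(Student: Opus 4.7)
The plan is to verify both implications directly from the definition $\nabla_{\tt DR}(\lambda) = O_{\pi^{\le}(\lambda)}(I(\lambda))$. Two auxiliary observations will be used repeatedly. First, $\socle(\nabla_{\tt DR}(\lambda)) = L(\lambda)$: on the one hand $\nabla_{\tt DR}(\lambda) \subseteq I(\lambda)$ has socle inside $\socle(I(\lambda)) = L(\lambda)$, and on the other hand $L(\lambda) \subseteq \nabla_{\tt DR}(\lambda)$ by maximality. Second, the injectivity of $I(\lambda)$ allows embeddings into $I(\lambda)$ to be extended along any inclusion, and this is how one leverages the definition.

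For (1)$\Rightarrow$(2), conditions (a) and (b) are immediate from the definition and the first observation. For (c), given $\mu \le \lambda$ and a short exact sequence $0 \to \nabla_{\tt DR}(\lambda) \to E \to L(\mu) \to 0$, I would extend the inclusion $\nabla_{\tt DR}(\lambda) \hookrightarrow I(\lambda)$ through $E$ using injectivity. The kernel of the resulting map $E \to I(\lambda)$ meets $\nabla_{\tt DR}(\lambda)$ trivially, so it is either $0$ or $L(\mu)$. In the first case $E$ itself embeds into $I(\lambda)$ and has all composition factors in $\pi^{\le}(\lambda)$; maximality forces $E \subseteq \nabla_{\tt DR}(\lambda)$, contradicting the surjection $E \twoheadrightarrow L(\mu)$. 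Hence the kernel is $L(\mu)$ and the sequence splits, proving $\Ext^1(L(\mu),\nabla_{\tt DR}(\lambda))=0$.

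For (2)$\Rightarrow$(1), condition (a) together with injectivity of $I(\lambda)$ yields a map $V \to I(\lambda)$ extending $\socle(V) = L(\lambda) \hookrightarrow I(\lambda)$; since its kernel meets $\socle(V)$ trivially it must be zero, giving $V \hookrightarrow I(\lambda)$. Condition (b) and the maximality in the definition of $\nabla_{\tt DR}(\lambda)$ then force $V \subseteq \nabla_{\tt DR}(\lambda)$. To obtain equality, suppose $\nabla_{\tt DR}(\lambda)/V \neq 0$; choose a simple subcomodule $L(\mu)$ of the quotient (so $\mu \le \lambda$) and let $E$ denote its preimage in $\nabla_{\tt DR}(\lambda)$. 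By (c) the extension $0 \to V \to E \to L(\mu) \to 0$ splits, producing a copy of $L(\mu)$ inside $\nabla_{\tt DR}(\lambda)$ disjoint from $V$. This simple submodule must lie in $\socle(\nabla_{\tt DR}(\lambda)) = L(\lambda)$, but it is disjoint from $\socle(V) = L(\lambda)$, so $\socle(\nabla_{\tt DR}(\lambda))$ contains two distinct copies of $L(\lambda)$, contradicting that it equals $L(\lambda)$.

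The main obstacle is conceptual rather than technical: one must carefully juggle the embedding of $V$ into the injective hull (where injectivity is the essential tool) with the characterisation of $\nabla_{\tt DR}(\lambda)$ as a maximal subcomodule of $I(\lambda)$ with prescribed composition factors. Once this interplay is made explicit, the argument is the coalgebraic transcription of the classical Dlab--Ringel proof in \cite{dlab-ringel-2}.
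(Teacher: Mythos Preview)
The paper does not supply its own proof of this lemma; it is stated as a known characterization, the coalgebraic dual of \cite[Lemma~1.1]{dlab-ringel-2}. Your argument is correct and is precisely the standard Dlab--Ringel proof transported to comodules, so there is nothing to compare. One very minor stylistic point: in the step (1)$\Rightarrow$(2)(c), when the kernel of $E\to I(\lambda)$ is zero you conclude $E\subseteq\nabla_{\tt DR}(\lambda)$ and say this ``contradicts the surjection $E\twoheadrightarrow L(\mu)$''; it would be slightly cleaner to note that together with $\nabla_{\tt DR}(\lambda)\subseteq E$ this forces $E=\nabla_{\tt DR}(\lambda)$ and hence $L(\mu)=0$, which is the actual contradiction.
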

The categories~$\Fscr(\nabla_{\tt DR})$ and~$\Fscr(\Delta_{\tt DR})$ are defined as in the Donkin case. To define quasi-hereditary coalgebras, Dlab and Ringel require~$(\Lambda,\leq)$ to satisfy an additional property, ensuring that the standard and costandard comodules don't change under refinement of this partial order.
\begin{definition}
The poset~$(\Lambda,\leq)$ is said to be adapted if for every~$C$-representation~$V$, with~$\mathrm{top}(V) \cong L(\lambda_1)$, and~$\socle(V)\cong L(\lambda_2)$, such that~$\lambda_1$ and~$\lambda_2$ are incomparable with respect to~$\leq$, there exists a~$\mu$ such that~$\mu > \lambda_1$ and~$\mu > \lambda_2$ such that~$[V:L(\mu)] \neq 0$.
\end{definition}
In fact, they show that a weaker condition suffices to have an adapted ordering.
\begin{lemma}
\label{lemma:adapted}
Suppose that for every~$C$-representation~$V$, with~$\mathrm{top}(V) \cong L(\lambda_1)$, and~$\socle(V)\cong L(\lambda_2)$, such that~$\lambda_1$ and~$\lambda_2$ are incomparable with respect to~$\leq$, there exists a~$\mu$ such that~$\mu > \lambda_1$ or~$\mu > \lambda_2$ such that~$[V:L(\mu)] \neq 0$. Then~$(\Lambda,\leq)$ is adapted.
\end{lemma}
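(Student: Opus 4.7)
The plan is to prove the lemma by induction on the length of $V$. In length one there is nothing to show, and length two cases are ruled out outright by the hypothesis: a $V$ of length two with $\top(V) = L(\lambda_1)$, $\socle(V) = L(\lambda_2)$ and $\lambda_1,\lambda_2$ incomparable has only $L(\lambda_1)$ and $L(\lambda_2)$ as composition factors, neither of which can strictly dominate $\lambda_1$ or $\lambda_2$, so the weaker hypothesis applied to this $V$ would fail.

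For the inductive step, consider $V$ of length at least three with $\top(V) = L(\lambda_1)$ and $\socle(V) = L(\lambda_2)$ incomparable. The weaker hypothesis produces a composition factor $L(\mu)$ of $V$ with $\mu > \lambda_1$ or $\mu > \lambda_2$. If both hold we are done, so by symmetry I treat the case $\mu > \lambda_1$ and $\mu \not> \lambda_2$. First observe that $\mu$ and $\lambda_2$ are incomparable: if $\mu \le \lambda_2$ then $\lambda_1 < \mu \le \lambda_2$, contradicting the incomparability of $\lambda_1$ and $\lambda_2$. Now take any nonzero map $P(\mu) \to V$ from the projective cover and let $V' \subseteq V$ be its image. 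Since any nonzero quotient of $P(\mu)$ has top $L(\mu)$, one has $\top(V') = L(\mu)$; and since $\socle(V) = L(\lambda_2)$ is simple, $\socle(V') = L(\lambda_2)$. Because $\top(V') \neq \top(V)$, $V'$ is a proper subcomodule of strictly smaller length than $V$. Applying the inductive hypothesis to $V'$ yields a composition factor $L(\nu)$ with $\nu > \mu$ and $\nu > \lambda_2$; transitivity then gives $\nu > \lambda_1$, so $\nu$ also witnesses the adaptedness condition for $V$.

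The symmetric case $\mu > \lambda_2$ and $\mu \not> \lambda_1$ requires the dual construction: a proper quotient $\bar{V} = V/V''$ with $\top(\bar{V}) = L(\lambda_1)$ and $\socle(\bar{V}) = L(\mu)$. Such a quotient is obtained by choosing $V''$ maximal among subcomodules of $V$ for which $L(\mu)$ embeds into $V/V''$: if $\socle(V/V'')$ contained another simple summand $L(\mu')$, pulling $L(\mu')$ back to $V$ would give a strictly larger $V''$ still having $L(\mu)$ in the socle of the quotient, contradicting maximality. Since $\socle(V) = L(\lambda_2) \neq L(\mu)$, the subcomodule $V''$ is nonzero, so $\bar{V}$ is a proper quotient. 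As before, $\lambda_1$ and $\mu$ are incomparable, and induction on the shorter $\bar{V}$ produces $\nu > \lambda_1$ and $\nu > \mu$, whence $\nu > \lambda_2$ by transitivity.

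The main obstacle is precisely the dual existence step of the third paragraph: while the projective cover $P(\mu)$ gives a subcomodule of $V$ with simple top $L(\mu)$ for free, producing a quotient of $V$ with simple socle $L(\mu)$ requires the small maximality argument above. Everything else in the proof is bookkeeping on the order $\leq$, using only transitivity together with the observation that a composition factor strictly dominating one of $\lambda_1,\lambda_2$ but not the other must be incomparable to the other.
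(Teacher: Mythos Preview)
Your proof is correct. Note, however, that the paper does not actually supply its own proof of this lemma: it is stated in the section recalling Dlab--Ringel's setup, prefaced by ``In fact, they show that a weaker condition suffices to have an adapted ordering,'' and the argument is left to the reference~\cite{dlab-ringel-2}. Your induction on the length of $V$ is the standard argument one finds there, so there is nothing substantively different to compare.

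A couple of minor points that would tighten the write-up. In the symmetric case you should make explicit (as you implicitly use) that $\top(V/V'') \cong L(\lambda_1)$: this holds because $\top(V)$ simple forces $\operatorname{rad}(V)$ to be the unique maximal proper subcomodule, so any proper $V''$ lies in $\operatorname{rad}(V)$ and hence $V/V''$ again has unique maximal subcomodule $\operatorname{rad}(V)/V''$ with quotient $L(\lambda_1)$. Also, in both cases it is worth stating that the witness $\nu$ obtained for the shorter module is automatically a composition factor of $V$ (since $V' \subseteq V$, respectively $\bar V$ is a quotient of $V$); you use this but do not say it. Otherwise the argument is clean.
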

Remember that a representation~$V$ is called Schurian if its endomorphism ring is a division ring. 
\begin{definition}
\label{definition:drqh} 
The coalgebra~$C$ is D(lab)-R(ingel)-quasi-hereditary if~$(\Lambda,\leq)$ is adapted, all costandard comodules~$\nabla_{\tt DR}(\lambda)$ are Schurian, and if one of the following equivalent conditions hold:
\begin{enumerate}
\item ${}^C C \in \Fscr(\nabla_{\text{DR}})$,
\item $\Fscr(\nabla_{\text{DR}})=\{V \vert \Ext^1(\Delta_{\text{DR}},V)=0\}$,
\item $\Fscr(\nabla_{\text{DR}})=\{V \vert \Ext^i(\Delta_{\text{DR}},V)=0 \text{ for all } i \geq 1\}$,
\item $\Ext^2(\Delta_{\text{DR}},\nabla_{\text{DR}})=0$.
\end{enumerate}
\end{definition}
The equivalence of these conditions can be found as Theorem 1 in~\cite{dlab-ringel-2}. Note that by the autoduality of the criteria we obtain
\begin{lemma}
\label{lem:schur1}
It the coalgebra~$C$ is DR-quasi-hereditary then for all~$\lambda \in
\Lambda$, $\nabla_{\tt DR}(\lambda)$ and~$\Delta_{\tt DR}(\lambda)$
are Schurian.
\end{lemma}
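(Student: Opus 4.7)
The first claim, that $\nabla_{\tt DR}(\lambda)$ is Schurian, is part of Definition~\ref{definition:drqh} itself. For the claim that $\Delta_{\tt DR}(\lambda)$ is Schurian, I would proceed by exploiting the autoduality alluded to in the sentence preceding the lemma.

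Concretely, let $D=\Hom_k(-,k)$; this is an exact contravariant equivalence between the categories of finite dimensional left and right $C$-comodules (both are categories of finite dimensional $C^\ast$-modules, read on opposite sides). First I would verify that $D$ is compatible with all of the combinatorial data used in Definition~\ref{definition:drqh}: simples $L(\lambda)$ are sent to simples $L'(\lambda)$ indexed by the same poset $\Lambda$, injective hulls go to projective covers and conversely, socles and tops are swapped, and consequently, using the $O_\pi$/$O^\pi$ characterisations, $D(\nabla_{\tt DR}(\lambda))\cong \Delta'_{\tt DR}(\lambda)$ and $D(\Delta_{\tt DR}(\lambda))\cong\nabla'_{\tt DR}(\lambda)$, where the prime decorates the corresponding notions on the right-comodule side.

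Next I would check that each ingredient of DR-quasi-hereditarity is symmetric under $D$. The condition that $(\Lambda,\leq)$ is adapted is manifestly left-right symmetric, as Lemma~\ref{lemma:adapted} and the defining condition use top and socle in symmetric fashion. The Schurian hypothesis on $\nabla_{\tt DR}(\lambda)$ for left comodules translates to a Schurian condition on $\Delta'_{\tt DR}(\lambda)$ for right comodules, since $\End(D M)\cong \End(M)^{\text{op}}$ and the opposite of a division ring is again a division ring. Among the four equivalent conditions in Definition~\ref{definition:drqh}, condition (4), namely $\Ext^2(\Delta_{\tt DR},\nabla_{\tt DR})=0$, is the manifestly self-dual one: $D$ preserves $\Ext$ and exchanges standard with costandard by the previous paragraph, so it transports condition~(4) for left $C$-comodules into condition~(4) for right $C$-comodules.

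Combining these, the category of right $C$-comodules is also DR-quasi-hereditary, and so in particular each $\nabla'_{\tt DR}(\lambda)$ is Schurian by the Schurian part of its own definition. Applying $D$ and using $D(\Delta_{\tt DR}(\lambda))\cong\nabla'_{\tt DR}(\lambda)$ together with $\End(\Delta_{\tt DR}(\lambda))^{\text{op}}\cong \End(\nabla'_{\tt DR}(\lambda))$, we conclude that $\End(\Delta_{\tt DR}(\lambda))$ is a division ring, as desired. The only delicate step is the identification $D(\nabla_{\tt DR}(\lambda))\cong \Delta'_{\tt DR}(\lambda)$; everything else is a routine unwinding of the fact that vector-space duality intertwines the two sides of the constructions appearing in Definitions~\ref{definition:dcostandard} and~\ref{definition:dstandard}.
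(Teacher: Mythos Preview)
Your approach via vector-space duality is in spirit the same as the paper's one-line remark ``by the autoduality of the criteria'', but as written it contains a circularity. You correctly observe that $D$ carries the left-comodule $\nabla_{\tt DR}(\lambda)$ to the right-comodule $\Delta'_{\tt DR}(\lambda)$, so the Schurian hypothesis on the left translates into $\Delta'_{\tt DR}(\lambda)$ Schurian on the right. However, Definition~\ref{definition:drqh} applied to right comodules requires $\nabla'_{\tt DR}(\lambda)$ to be Schurian, not $\Delta'_{\tt DR}(\lambda)$. Since $\nabla'_{\tt DR}(\lambda)\cong D(\Delta_{\tt DR}(\lambda))$, this missing hypothesis is precisely the statement you are trying to prove. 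Your sentence ``the category of right $C$-comodules is also DR-quasi-hereditary, and so in particular each $\nabla'_{\tt DR}(\lambda)$ is Schurian by the Schurian part of its own definition'' therefore begs the question: the Schurian condition is an \emph{input} to Definition~\ref{definition:drqh}, not an output.

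What the paper is actually invoking is the full content of Theorem~1 in \cite{dlab-ringel-2}: beyond the four conditions reproduced in Definition~\ref{definition:drqh}, that theorem also contains the dual versions with the roles of $\Delta_{\tt DR}$ and $\nabla_{\tt DR}$ exchanged, so that in particular ``$\nabla_{\tt DR}$ Schurian'' may be replaced by ``$\Delta_{\tt DR}$ Schurian'' throughout. This equivalence is a genuine theorem and not a formal consequence of applying $D$; it is what ``autoduality of the criteria'' refers to, and it is what you must cite rather than attempt to rederive from duality alone.
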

\subsection{Equivalence of the definitions} 
In this section we prove the following result.
\begin{theorem}
\label{theorem:equivalence}
The coalgebra~$C$ is D-quasi-hereditary with respect to $(\Lambda,\leq)$ if and only if it is DR-quasi-hereditary with respect to $(\Lambda,\leq)$. Moreover,
in that case $\nabla_{\tt D}(\lambda)=\nabla_{\tt DR}(\lambda)$ and~$\Delta_{\tt D}(\lambda)=\Delta_{\tt DR}(\lambda)$ for all~$\lambda \in \Lambda$.
\end{theorem}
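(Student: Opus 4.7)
The heart of the argument is to identify $\nabla_{\tt D}(\lambda)=\nabla_{\tt DR}(\lambda)$ (and dually $\Delta_{\tt D}(\lambda)=\Delta_{\tt DR}(\lambda)$) under either set of axioms; once this is accomplished, the two axiom systems become close enough that checking the equivalence reduces to a routine translation. The inclusion $\nabla_{\tt D}(\lambda)\subseteq\nabla_{\tt DR}(\lambda)$ is immediate in both settings, since $\nabla_{\tt D}(\lambda)$ has all composition factors in $\pi^{\leq}(\lambda)$ while $\nabla_{\tt DR}(\lambda)$ is maximal with this property; the reverse inclusion is subtler and is attacked differently in the two settings.

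Under the Donkin axioms I would fix a $\nabla_{\tt D}$-filtration $0=F_0\subsetneq F_1\subsetneq\cdots\subsetneq F_n=I(\lambda)$. Since the socle of $F_1$ lies in $\socle I(\lambda)=L(\lambda)$ and each $\nabla_{\tt D}$ has simple socle, one has $F_1=\nabla_{\tt D}(\lambda)$, and Definition~\ref{definition:dqh}(2),(3) then forces $F_{i+1}/F_i\cong\nabla_{\tt D}(\mu_{i+1})$ with $\mu_{i+1}>\lambda$ for $i\geq1$. Setting $V=\nabla_{\tt DR}(\lambda)$, each quotient $(V\cap F_{i+1})/(V\cap F_i)$ embeds in $\nabla_{\tt D}(\mu_{i+1})$, and a nonzero such subcomodule would necessarily contain $\socle\nabla_{\tt D}(\mu_{i+1})=L(\mu_{i+1})$; but $V$ has composition factors only $\leq\lambda<\mu_{i+1}$, so this quotient must vanish, and $V=V\cap F_1=\nabla_{\tt D}(\lambda)$. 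Under the Dlab--Ringel axioms the equality instead reduces to showing $[\nabla_{\tt DR}(\lambda):L(\lambda)]=1$, for then both comodules are characterized by having simple socle $L(\lambda)$ and composition factors strictly below $\lambda$ in the quotient. This multiplicity statement uses the Schurian hypothesis: a second copy of $L(\lambda)$ in $V=\nabla_{\tt DR}(\lambda)$ would yield a nonzero non-invertible endomorphism of $V$, contradicting $\End V=k$. The corresponding $\Delta$-statements follow by dualizing.

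With these identifications the implication D~$\Rightarrow$~DR is straightforward. Schurianity of $\nabla_{\tt D}(\lambda)$ follows from its simple socle together with $[\nabla_{\tt D}(\lambda):L(\lambda)]=1$; the condition ${}^C\!C\in\Fscr(\nabla_{\tt DR})$ follows by decomposing $C$ into injective hulls of simples and applying Definition~\ref{definition:dqh}(1); and Lemma~\ref{lemma:adapted} reduces adaptedness to producing, for any $V$ with simple socle $L(\lambda_2)$ and simple top $L(\lambda_1)$ incomparable to $\lambda_2$, a composition factor $L(\mu)$ with $\mu>\lambda_2$. This falls out of the filtration argument above applied to the embedding $V\hookrightarrow I(\lambda_2)$: the smallest $F_i$ containing $V$ has $i\geq2$ (otherwise $V\subseteq\nabla_{\tt D}(\lambda_2)$ would force $\lambda_1\leq\lambda_2$), and the image of $V$ in $F_i/F_{i-1}=\nabla_{\tt D}(\mu_i)$ has socle $L(\mu_i)$ with $\mu_i>\lambda_2$. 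Conversely, for DR~$\Rightarrow$~D, Definition~\ref{definition:drqh}(1) gives ${}^C\!C\in\Fscr(\nabla_{\tt DR})=\Fscr(\nabla_{\tt D})$, and this filtration restricts to each indecomposable injective summand $I(\lambda)$; the bottom factor must be $\nabla_{\tt D}(\lambda)$ by the socle argument, yielding Donkin's condition~(2), and any higher factor $\nabla_{\tt D}(\mu)$ must satisfy $\mu>\lambda$ since a factor with $\mu<\lambda$ would augment $[\nabla_{\tt DR}(\lambda):L(\lambda)]$ past~$1$, yielding Donkin's condition~(3).

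The main obstacle I anticipate is the Schurianity argument used to prove $[\nabla_{\tt DR}(\lambda):L(\lambda)]=1$. The naive attempt to build a non-invertible endomorphism by composing a surjection onto $L(\lambda)$ with the inclusion $L(\lambda)=\socle\nabla_{\tt DR}(\lambda)\hookrightarrow\nabla_{\tt DR}(\lambda)$ requires $L(\lambda)$ to appear as a quotient of $\nabla_{\tt DR}(\lambda)$, which is not automatic. One resolves this by invoking the full strength of Definition~\ref{definition:drqh}(4): the vanishing of $\Ext^2(\Delta_{\tt DR},\nabla_{\tt DR})$ rules out the relevant non-split self-extension of $L(\lambda)$ sitting inside $\nabla_{\tt DR}(\lambda)$; alternatively this is essentially Theorem~1 of~\cite{dlab-ringel-2}.
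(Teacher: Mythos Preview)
Your D $\Rightarrow$ DR direction is correct and close to the paper's: the paper packages the identification $\nabla_{\tt D}(\lambda)=\nabla_{\tt DR}(\lambda)$ as a lemma showing that under the Donkin axioms $\Ext^1(L(\mu),\nabla_{\tt D}(\lambda))\neq0$ forces $\mu>\lambda$, but the mechanism---the $\nabla_{\tt D}$-filtration of $I(\lambda)$ with higher factors indexed by weights strictly above $\lambda$---is the same as yours, and your argument for adaptedness is a direct variant of the paper's.

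The gap is in DR $\Rightarrow$ D, where you verify Donkin's conditions (2) and (3). Your claim that ``a factor with $\mu<\lambda$ would augment $[\nabla_{\tt DR}(\lambda):L(\lambda)]$ past~1'' does not hold up: for $\mu<\lambda$ the comodule $\nabla_{\tt DR}(\mu)$ has no composition factor $L(\lambda)$, so its appearance as a filtration subquotient of $I(\lambda)$ says nothing about that multiplicity; nor does such a factor embed as a \emph{subcomodule} of $I(\lambda)$, so you cannot invoke the maximality that defines $\nabla_{\tt DR}(\lambda)$. Moreover you only address $\mu<\lambda$; you say nothing about $\mu$ incomparable to $\lambda$, nor about a second occurrence of $\mu=\lambda$, and both must be excluded. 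The paper's argument here is genuinely different: given a $\nabla_{\tt DR}$-filtration of $I(\lambda)$, it locates the lowest factor $\nabla_{\tt DR}(\lambda_{p})$ above the bottom one with $\lambda_p\not>\lambda$ (minimising $p$ over all filtrations), and uses the adapted-ordering fact $\Ext^1(\nabla_{\tt DR}(\gamma),\nabla_{\tt DR}(\eta))\neq0\Rightarrow\gamma>\eta$ (Lemma~1.3 of \cite{dlab-ringel-2}) to swap this factor with the one beneath it, contradicting minimality of $p$. A clean alternative fix is BGG reciprocity $(I(\lambda):\nabla_{\tt DR}(\mu))=[\Delta_{\tt DR}(\mu):L(\lambda)]$, which follows from $\Ext^1(\Delta_{\tt DR},\nabla_{\tt DR})=0$ and yields (2) and (3) at once---but this is not the argument you wrote.
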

It will be convenient to make the following definition.
\begin{definition}
\label{definition:adapted}
A representation~$V$ is called strongly costandard if the following three conditions are satisfied for some~$\lambda \in \Lambda$:
\begin{enumerate}
\item $\socle(V) \cong L(\lambda)$,
\item if~$[V:L(\mu)] \neq 0$, then~$\mu \leq \lambda$,
\item if $\Ext^1(L(\mu),V)\neq 0$ then $\mu > \lambda$, 
\end{enumerate}
\end{definition}
Note that a strongly costandard comodule is automatically DR-costandard. So it is of the form $\nabla_{\text{DR}}(\lambda)$.
\begin{lemma}
\label{lemma:adapted}
If~$C$ is D-quasi-hereditary, then for every $\lambda\in \Lambda$ one has $\nabla_{\tt D}(\lambda)=\nabla_{\tt DR}(\lambda)$ and moreover
these representations are strongly costandard.
\end{lemma}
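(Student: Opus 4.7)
The plan is to show that $\nabla_{\tt D}(\lambda)$ satisfies the three conditions of Definition~\ref{definition:adapted}, i.e.\ is strongly costandard. Granted this, Lemma~\ref{lemma:costandard} will yield $\nabla_{\tt D}(\lambda) \cong \nabla_{\tt DR}(\lambda)$. To upgrade this isomorphism to equality as subcomodules of $I(\lambda)$, I would observe that condition~(2) forces $\nabla_{\tt D}(\lambda) \subset \nabla_{\tt DR}(\lambda)$ by the maximality built into the definition of $\nabla_{\tt DR}$; any proper containment would give a non-zero subcomodule of $Q := I(\lambda)/\nabla_{\tt D}(\lambda)$ all of whose composition factors are $L(\mu)$ with $\mu \leq \lambda$, which the socle analysis below will rule out.

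Conditions (1) and (2) of Definition~\ref{definition:adapted} follow essentially tautologically from Definition~\ref{definition:dcostandard}: the socle of $\nabla_{\tt D}(\lambda)$ inherits simplicity from that of the ambient $I(\lambda)$ while containing $L(\lambda)$, so it equals $L(\lambda)$; and the identity $\nabla_{\tt D}(\lambda)/L(\lambda) = O_{\pi^<(\lambda)}(I(\lambda)/L(\lambda))$ written into the definition ensures that every other composition factor is $L(\mu)$ with $\mu < \lambda$.

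The main obstacle is condition (3), which is strictly stronger than anything the Donkin definition provides on its face: it asks for $\Ext^1(L(\mu), \nabla_{\tt D}(\lambda)) = 0$ for \emph{all} $\mu$ not strictly above $\lambda$, rather than only for $\mu < \lambda$. The idea is to exploit the ambient injective. By the D-quasi-hereditary hypothesis (Definition~\ref{definition:dqh}), $I(\lambda)$ carries a $\nabla_{\tt D}$-filtration with $\nabla_{\tt D}(\lambda)$ appearing exactly once, which can be arranged as the bottom step; the remaining factors are $\nabla_{\tt D}(\nu)$ with $\nu > \lambda$, and hence $Q$ inherits a $\nabla_{\tt D}$-filtration with factors $\nabla_{\tt D}(\nu)$, $\nu > \lambda$. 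A short socle-filtration argument -- any simple subcomodule $L(\mu) \hookrightarrow Q$ embeds into the first piece it meets non-trivially, hence into some $\nabla_{\tt D}(\nu)$ whose socle is $L(\nu)$, forcing $\mu = \nu > \lambda$ -- shows that every simple submodule of $Q$ is labelled by some $\nu > \lambda$. Applying $\Hom(L(\mu), -)$ to
\[
0 \to \nabla_{\tt D}(\lambda) \to I(\lambda) \to Q \to 0
\]
and using the injectivity of $I(\lambda)$ gives a surjection $\Hom(L(\mu), Q) \twoheadrightarrow \Ext^1(L(\mu), \nabla_{\tt D}(\lambda))$, and the socle observation forces both sides to vanish whenever $\mu \not > \lambda$. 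This establishes condition (3), and the same socle observation simultaneously rules out the hypothetical subcomodule of $Q$ left dangling in the first paragraph, so the equality $\nabla_{\tt D}(\lambda) = \nabla_{\tt DR}(\lambda)$ as subcomodules of $I(\lambda)$ follows at once.
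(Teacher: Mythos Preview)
Your proof is correct and follows essentially the same route as the paper's: both reduce to showing that any simple subcomodule of $Q = I(\lambda)/\nabla_{\tt D}(\lambda)$ is $L(\nu)$ with $\nu > \lambda$, using the $\nabla_{\tt D}$-filtration on $I(\lambda)$ coming from the D-quasi-hereditary hypothesis with $\nabla_{\tt D}(\lambda)$ at the bottom. The paper phrases condition~(3) via an explicit non-split extension embedded into $I(\lambda)$, while you use the long exact sequence; these are equivalent formulations of the same argument, and your treatment of the equality $\nabla_{\tt D}(\lambda) = \nabla_{\tt DR}(\lambda)$ as submodules is slightly more explicit than the paper's.
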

\begin{proof}
We want to show that~$\nabla_{\tt D}(\lambda)$ satisfies the conditions stated in Definition \ref{definition:adapted}.
The only condition that is not clear is condition~$(3)$. Suppose $X$ is the middle term of a non-split extension in $\Ext^1(L(\mu), \nabla_{\tt D}(\lambda))$. Using injectivity of~$I(\lambda)$ we get a map~$f$ as follows:
\begin{equation}
\begin{tikzcd}
0 \arrow{r} & \nabla_{\tt D}(\lambda) \arrow{r} \dar & X \arrow{r} \dlar[dashed]{f} & L(\mu) \arrow{r} & 0 \\
 & I(\lambda) & & &
\end{tikzcd}
\end{equation}
Since the extension is non-split,~$\socle(\nabla_{\tt D}(\lambda))=\socle(X)=L(\lambda)$, so~$f$ is injective when restricted to the socles, which implies that~$f:X \to I(\lambda)$ is injective. In particular, there is an injection
\begin{equation}
L(\mu) \cong X/\nabla_{\tt D}(\lambda) \hookrightarrow I(\lambda)/\nabla_{\tt D}(\lambda),
\end{equation} 
from which one gets by Definition \ref{definition:dqh}(3) 
that $L(\mu)$ is contained in some $\nabla_D(\gamma)$ for
$\gamma>\lambda$. Since $\nabla_D(\gamma)\subset I(\gamma)$ has simple socle this is only possible if $\mu=\gamma$ and hence $\mu>\lambda$.
\end{proof}
\begin{lemma}
\label{lemma:schurian}
For all~$\lambda \in \Lambda$, $\nabla_{\tt D}(\lambda)$ and~$\Delta_{\tt D}(\lambda)$ are Schurian. 
\end{lemma}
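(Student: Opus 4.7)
The plan is to prove both assertions by a Schur-style argument: show directly that every nonzero endomorphism of $\nabla_{\tt D}(\lambda)$ or $\Delta_{\tt D}(\lambda)$ is an isomorphism, so that the endomorphism ring is automatically a division ring. The only inputs needed are the simple socle (respectively simple top) property together with the fact that $L(\lambda)$ occurs exactly once as a composition factor; both are read off directly from Definitions \ref{definition:dcostandard} and \ref{definition:dstandard}, so the full D-quasi-hereditary hypothesis is not even required.

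For $\nabla_{\tt D}(\lambda)$, I would first record that $\socle(\nabla_{\tt D}(\lambda))=L(\lambda)$ (as a subcomodule of the indecomposable injective $I(\lambda)$) and that every composition factor of $\nabla_{\tt D}(\lambda)/L(\lambda)$ lies in $\pi^<(\lambda)$. Given a nonzero $f \in \End(\nabla_{\tt D}(\lambda))$, assume for contradiction that $\ker f \neq 0$. Since the socle is simple, $L(\lambda) \subseteq \ker f$, so $\im f \cong \nabla_{\tt D}(\lambda)/\ker f$ has all composition factors strictly below $\lambda$. But $\im f$ is a subcomodule of $\nabla_{\tt D}(\lambda)$, hence $\socle(\im f) \subseteq L(\lambda)$; combined with the composition-factor restriction this forces $\socle(\im f)=0$, hence $\im f=0$, contradicting $f \neq 0$. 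So $f$ is injective, and an isomorphism by finite dimensionality.

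For $\Delta_{\tt D}(\lambda)$ I would dualise the argument: by Definition \ref{definition:dstandard}, $\Delta_{\tt D}(\lambda)=P(\lambda)/O^{\pi^<(\lambda)}(N(\lambda))$ has top $L(\lambda)$ with multiplicity one as a composition factor, all remaining composition factors lying in $\pi^<(\lambda)$. If a nonzero $f \in \End(\Delta_{\tt D}(\lambda))$ were not surjective, then both $\im f$ and $\coker f$ would be nonzero quotients of $\Delta_{\tt D}(\lambda)$, and each would therefore have top $L(\lambda)$. Additivity of composition-factor multiplicities would then give $[\Delta_{\tt D}(\lambda):L(\lambda)]\geq 2$, contradicting multiplicity one. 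Hence $f$ is surjective, and an isomorphism by finite dimensionality.

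I do not expect any serious obstacle: both halves are formal once the simple socle/top and multiplicity-one properties are in hand. The mild asymmetry --- passing through kernels in one case and cokernels in the other --- mirrors the asymmetry between the definitions of $\nabla_{\tt D}(\lambda)$ and $\Delta_{\tt D}(\lambda)$, and both arguments are of the same standard Schur-lemma type.
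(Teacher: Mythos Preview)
Your proposal is correct and takes essentially the same approach as the paper: both arguments rest solely on the simple socle/top together with $[\nabla_{\tt D}(\lambda):L(\lambda)]=[\Delta_{\tt D}(\lambda):L(\lambda)]=1$, read off from Definitions~\ref{definition:dcostandard} and~\ref{definition:dstandard}. The only cosmetic difference is that the paper shows directly that every endomorphism is a scalar (by subtracting off the induced scalar on the top and arguing the difference vanishes), whereas you show every nonzero endomorphism is invertible and then invoke finite dimensionality.
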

\begin{proof}
  From Definitions~\ref{definition:dcostandard}
  and~\ref{definition:dstandard} it is immediate that~$[\nabla_{\tt
   D}(\lambda):L(\lambda)]=1$. It follows that any~$f
  \in \End(\Delta_{\tt D}(\lambda))$ induces a morphism
  on~$L(\lambda)$, which is multiplication by a scalar $\alpha$.
 This means that~$f-\alpha$ maps~$\Delta_{\tt
    D}(\lambda)$ into its unique maximal subcomodule~$M$,
  so~$L(\lambda)$ has to be a composition factor of~$\ker(f-\alpha)$. It
  follows that~$\ker(f-\alpha)$ cannot be contained in~$M$, so it has to
  be all of~$\Delta_{\tt D}(\lambda)$, proving the lemma. 
  For~$\nabla_{\tt D}(\lambda)$ the proof is analogous.
\end{proof}
\begin{lemma}
\label{corollary:compatible}
If~$C$ is D-quasi-hereditary or DR-quasi-hereditary, both with respect to~$(\Lambda,\leq)$, then~$\nabla_{\tt DR}(\lambda)=\nabla_{\tt D}(\lambda)$ and~$\Delta_{\tt D}(\lambda)=\Delta_{\tt DR}(\lambda)$.
\end{lemma}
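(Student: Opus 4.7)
The plan is to treat the D-quasi-hereditary and DR-quasi-hereditary hypotheses separately. Before doing so I record a preliminary observation that is immediate from the definitions: the subcomodule $\nabla_{\tt D}(\lambda)$ of $I(\lambda)$ has all composition factors in $\pi^{\leq}(\lambda)$, so by the maximality property of $\nabla_{\tt DR}(\lambda) = O_{\pi^{\leq}(\lambda)}(I(\lambda))$ one always has $\nabla_{\tt D}(\lambda) \subseteq \nabla_{\tt DR}(\lambda)$; dually, there is a canonical surjection $\Delta_{\tt DR}(\lambda) \twoheadrightarrow \Delta_{\tt D}(\lambda)$.

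Assume first that $C$ is D-quasi-hereditary. The equality $\nabla_{\tt D}(\lambda) = \nabla_{\tt DR}(\lambda)$ is then exactly the content of Lemma~\ref{lemma:adapted}, and $\Delta_{\tt D}(\lambda) = \Delta_{\tt DR}(\lambda)$ follows by the verbatim dualisation of that proof, replacing $I(\lambda)$ by $P(\lambda)$ throughout, the costandard definition by the standard one, and invoking the dual of Lemma~\ref{lemma:costandard}.

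Assume next that $C$ is DR-quasi-hereditary. The crux of the proof is to show $[\nabla_{\tt DR}(\lambda):L(\lambda)] = 1$. Once this is established, combining it with Lemma~\ref{lemma:costandard}(2b) forces every composition factor of $\nabla_{\tt DR}(\lambda)/L(\lambda)$ to lie in $\pi^<(\lambda)$, and a short maximality check inside $I(\lambda)/L(\lambda)$ identifies $\nabla_{\tt DR}(\lambda)/L(\lambda)$ with $O_{\pi^<(\lambda)}(I(\lambda)/L(\lambda))$, whence $\nabla_{\tt DR}(\lambda) = \nabla_{\tt D}(\lambda)$. To prove the multiplicity statement, assume for contradiction that $L(\lambda)$ occurs in $\nabla_{\tt DR}(\lambda)/\socle(\nabla_{\tt DR}(\lambda))$, and take the preimage $X \subseteq \nabla_{\tt DR}(\lambda)$ of such a copy; this yields a short exact sequence $0 \to L(\lambda) \to X \to L(\lambda) \to 0$ which must be non-split, since a split extension would produce $L(\lambda) \oplus L(\lambda) \subseteq \socle(\nabla_{\tt DR}(\lambda))$, contradicting simplicity of the socle. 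Form the composite
\[
f_0 \colon X \twoheadrightarrow L(\lambda) \hookrightarrow \socle(\nabla_{\tt DR}(\lambda)) \subseteq I(\lambda),
\]
a nonzero map with one-dimensional image, and extend $f_0$ along $X \hookrightarrow I(\lambda)$ to an endomorphism $\widetilde{f} \colon I(\lambda) \to I(\lambda)$ using injectivity of $I(\lambda)$. The image $\widetilde{f}(\nabla_{\tt DR}(\lambda))$ has composition factors in $\pi^{\leq}(\lambda)$, hence is contained in $\nabla_{\tt DR}(\lambda)$, so $\widetilde{f}$ restricts to an endomorphism of $\nabla_{\tt DR}(\lambda)$; by Lemma~\ref{lem:schur1} this restriction equals $c \cdot \id$ for some $c \in k$. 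Restriction to $X$ then forces $f_0 = c \cdot \id_X$, which is impossible since $f_0$ has image of dimension one whereas $c \cdot \id_X$ has image of dimension zero or two.

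The identity $\Delta_{\tt D}(\lambda) = \Delta_{\tt DR}(\lambda)$ in the DR-qh case is established by the formally dual argument, using the projective cover $P(\lambda)$ in place of $I(\lambda)$, the projective lifting property in place of the injective extension, and the Schurian property of $\Delta_{\tt DR}(\lambda)$ (again from Lemma~\ref{lem:schur1}); the only subtlety there is to verify that the lifted endomorphism of $P(\lambda)$ descends to $\Delta_{\tt DR}(\lambda)$, which follows from the universal property of $\Delta_{\tt DR}(\lambda) = P(\lambda)/O^{\pi^{\leq}(\lambda)}(P(\lambda))$ as the maximal quotient of $P(\lambda)$ with composition factors in $\pi^{\leq}(\lambda)$. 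I expect the Schurian-based endomorphism construction in the DR-qh case to be the main conceptual step; the D-qh case is essentially packaged in Lemma~\ref{lemma:adapted}, and the remaining composition-factor bookkeeping is routine.
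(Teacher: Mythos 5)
Your proof takes essentially the same route as the paper: in the D-quasi-hereditary case you appeal to Lemma~\ref{lemma:adapted} and its dual, and in the DR-quasi-hereditary case you use the Schurian property from Lemma~\ref{lem:schur1} to deduce $[\nabla_{\tt DR}(\lambda):L(\lambda)]=1$ and then compare the two definitions. The paper dismisses the multiplicity-one step as ``easily seen'', and your injective-extension/endomorphism construction is indeed the intended mechanism filling that gap.

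One step needs repair. When you ``take the preimage $X\subseteq\nabla_{\tt DR}(\lambda)$ of such a copy'' and assert a short exact sequence $0\to L(\lambda)\to X\to L(\lambda)\to 0$, you are tacitly assuming that a composition factor $L(\lambda)$ of $\nabla_{\tt DR}(\lambda)/\socle(\nabla_{\tt DR}(\lambda))$ is actually a subcomodule of that quotient, and moreover that it sits directly above the socle in a composition series; neither is automatic, and in general no such two-step $X$ exists. (You also implicitly take $\dim L(\lambda)=1$ in the ``dimension one versus zero or two'' contradiction.) The fix preserves your mechanism: choose a composition series of $\nabla_{\tt DR}(\lambda)$ and a step $W'\subsetneq W\subseteq\nabla_{\tt DR}(\lambda)$ with $W/W'\cong L(\lambda)$ and $W'\supseteq\socle(\nabla_{\tt DR}(\lambda))=L(\lambda)$. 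The composite $f_0\colon W\twoheadrightarrow L(\lambda)\hookrightarrow I(\lambda)$ extends by injectivity of $I(\lambda)$ to $\widetilde f\colon I(\lambda)\to I(\lambda)$; since $\widetilde f(\nabla_{\tt DR}(\lambda))$ is a quotient of $\nabla_{\tt DR}(\lambda)$ it has composition factors in $\pi^{\leq}(\lambda)$, hence lies in $O_{\pi^{\leq}(\lambda)}(I(\lambda))=\nabla_{\tt DR}(\lambda)$, so $\widetilde f$ restricts to a scalar $c\cdot\id$ on $\nabla_{\tt DR}(\lambda)$ by Schurianity. But $f_0$ has nonzero kernel (it kills $W'\supseteq L(\lambda)$) and nonzero image, so it is neither zero nor injective on $W$, contradicting $f_0=c\cdot\id|_W$ for any $c$.
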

\begin{proof}
If $C$ is $D$-quasi-hereditary then this follows from Lemma \ref{lemma:adapted} and its dual version.

So assume that $C$ is DR-quasi-hereditary.
By Lemmas \ref{lem:schur1}
the (co)standard comodules are Schurian. It is also easily seen that this implies that~$[\nabla_{\tt DR}(\lambda):L(\lambda)]=[\Delta_{\tt DR}(\lambda):L(\lambda)]=1$, and the corollary now follows immediately from the definitions of the respective (co)standard comodules.
\end{proof}

In order to prove Theorem \ref{theorem:equivalence} we also need to address the adapted ordering. 
\begin{lemma}
\label{lem:adapted}
If every DR-costandard comodule is strongly costandard then~$(\Lambda,\leq)$ is adapted.
\end{lemma}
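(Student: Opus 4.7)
The plan is to reduce to the weaker sufficient condition for adaptedness proved just above (Lemma \ref{lemma:adapted}): it suffices to show, for every representation $V$ with $\mathrm{top}(V) = L(\lambda_1)$, $\socle(V) = L(\lambda_2)$ and $\lambda_1,\lambda_2$ incomparable, that there is a composition factor $L(\mu)$ of $V$ with $\mu > \lambda_1$ or $\mu > \lambda_2$. I will argue by contradiction: assume every composition factor $L(\mu)$ of $V$ satisfies $\mu \not> \lambda_1$ and $\mu \not> \lambda_2$, and extract a contradiction from the strongly costandard property of $\nabla_{\tt DR}(\lambda_2)$, which is available by hypothesis.

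First, embed $V \hookrightarrow I(\lambda_2)$ via its socle and set $V' := V \cap \nabla_{\tt DR}(\lambda_2) \subseteq I(\lambda_2)$. This contains $L(\lambda_2) = \socle(V)$ but is strictly smaller than $V$: otherwise $V \subseteq \nabla_{\tt DR}(\lambda_2)$ would make $L(\lambda_1)$ a composition factor, forcing $\lambda_1 \leq \lambda_2$, against incomparability. Next, the quotient $V/V'$ embeds into $I(\lambda_2)/\nabla_{\tt DR}(\lambda_2)$, which by the maximality in the definition of $\nabla_{\tt DR}(\lambda_2)$ has no simple subcomodule with index $\leq \lambda_2$. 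Combined with the contradiction hypothesis on composition factors of $V$, this forces any simple $L(\nu) \hookrightarrow \socle(V/V')$ to have $\nu$ incomparable with $\lambda_2$.

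Lifting such an $L(\nu)$ to its preimage $\bar V \subseteq V$ yields a short exact sequence $0 \to V' \to \bar V \to L(\nu) \to 0$ which is non-split, since otherwise $L(\nu)$ would lie inside $\socle(V) = L(\lambda_2)$, contradicting incomparability. Hence $\Ext^1(L(\nu), V') \neq 0$. On the other hand, applying $\Hom(L(\nu), -)$ to $0 \to V' \to \nabla_{\tt DR}(\lambda_2) \to \nabla_{\tt DR}(\lambda_2)/V' \to 0$ sandwiches $\Ext^1(L(\nu), V')$ between $\Hom(L(\nu), \nabla_{\tt DR}(\lambda_2)/V')$ and $\Ext^1(L(\nu), \nabla_{\tt DR}(\lambda_2))$. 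The second term vanishes by the strongly costandard hypothesis since $\nu \not> \lambda_2$, and the first vanishes because the quotient only has composition factors of index $\leq \lambda_2$ while $\nu$ is incomparable with $\lambda_2$. This forces $\Ext^1(L(\nu), V') = 0$, a contradiction.

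The main obstacle I expect is identifying the correct comparison object: among the natural candidates one could try $\nabla_{\tt DR}(\lambda_1)$, $\nabla_{\tt DR}(\lambda_2)$, or projective covers, and it is only the intersection with the DR-costandard associated with the \emph{socle} side $\lambda_2$ that makes both the $\Hom$ and $\Ext^1$ terms in the long exact sequence simultaneously vanish from the incomparability of $\nu$ with $\lambda_2$.
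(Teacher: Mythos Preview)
Your argument is correct, but it takes a longer route than the paper's. The paper observes directly that condition~(3) in Definition~\ref{definition:adapted} makes $\nabla_{\tt DR}(\lambda_2)$ an injective object in the full (extension-closed) abelian subcategory of representations all of whose composition factors have index $\not> \lambda_2$. Since $V$ lies in this subcategory by the contradiction hypothesis, the inclusion $L(\lambda_2)=\socle(V)\hookrightarrow \nabla_{\tt DR}(\lambda_2)$ extends to a map $f:V\to \nabla_{\tt DR}(\lambda_2)$, which is injective because it is injective on socles. Then $L(\lambda_1)$ appears as a composition factor of $\nabla_{\tt DR}(\lambda_2)$, forcing $\lambda_1\leq\lambda_2$ and contradicting incomparability.

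Your argument is essentially an unrolled version of the same idea: rather than invoking relative injectivity to embed all of $V$ at once, you intersect $V$ with $\nabla_{\tt DR}(\lambda_2)$ inside $I(\lambda_2)$, isolate a single simple $L(\nu)$ in the quotient, and run the long exact sequence in $\Ext^1$ by hand to reach the contradiction. This is perfectly valid, and every step checks out (in particular your justifications that $V'\subsetneq V$, that $\nu$ is incomparable with $\lambda_2$, and that the extension $0\to V'\to\bar V\to L(\nu)\to 0$ is non-split are all correct). The paper's packaging into ``$\nabla_{\tt DR}(\lambda_2)$ is injective in the subcategory'' is cleaner and worth knowing, since it avoids the auxiliary object $V'$ and the long exact sequence entirely, but your approach has the merit of being completely explicit.
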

\begin{proof}
Suppose the ordering is not adapted. Then there exists a representation~$V$ with~$\topp(V)\cong L(\mu_1)$ and $\socle(V)\cong L(\mu_2)$, with~$\mu_1$ and~$\mu_2$ incomparable such that if~$[V:L(\mu)] \neq 0$, then~$\mu \ngtr \mu_2$. Condition~$(3)$ in Definition~\ref{definition:adapted} says that~$\nabla_{\tt DR}(\mu_2)$ is injective in the category of representations with composition factors~$\ngtr \mu_2$. Since~$V$ is in this category we get a commuting triangle
\begin{equation}
\begin{tikzcd}
V \arrow{r}{f} & \nabla_{\tt DR}(\mu_2) \\
L(\mu_2) \arrow[hookrightarrow]{u} \urar[hookrightarrow] & 
\end{tikzcd}
\end{equation}
Moreover,~$f$ is injective, since the induced map on the socles is
injective. But now we obtain a contradiction since all composition
factors of~$\nabla_{\tt DR}(\lambda)$ are~$\leq \mu_2$, whereas~$V$
has composition factor~$L(\mu_1)$, and~$\mu_1$ is incomparable
to~$\mu_2$.
\end{proof}
\begin{corollary}
\label{corollary:adapted}
If~$C$ is D-quasi-hereditary, then the poset~$(\Lambda,\leq)$ is adapted.
\end{corollary}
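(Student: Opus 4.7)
The plan is to obtain this corollary by chaining together the two immediately preceding lemmas, Lemma \ref{lemma:adapted} and Lemma \ref{lem:adapted}. Since the result is essentially a formal consequence, no new homological input is needed; all the substantive work has already been done in Lemma \ref{lemma:adapted}, which used the injectivity of $I(\lambda)$ together with the simplicity of $\operatorname{soc}(I(\lambda))$ and part (3) of Definition \ref{definition:dqh} to verify condition (3) of Definition \ref{definition:adapted}.

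More precisely, I would argue as follows. Assume that $C$ is D-quasi-hereditary with respect to $(\Lambda,\leq)$. By Lemma \ref{lemma:adapted}, for every $\lambda \in \Lambda$ one has $\nabla_{\tt D}(\lambda) = \nabla_{\tt DR}(\lambda)$, and this common comodule is strongly costandard in the sense of Definition \ref{definition:adapted}. Thus every DR-costandard comodule is strongly costandard. Applying Lemma \ref{lem:adapted} then yields that the poset $(\Lambda,\leq)$ is adapted.

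The only ``obstacle,'' insofar as one exists, is bookkeeping: one needs to check that the notion of strongly costandard used in Lemma \ref{lem:adapted} matches the one verified for $\nabla_{\tt D}(\lambda)$ in Lemma \ref{lemma:adapted}, which is immediate from the definitions. Since the argument is so short, I would simply write it as a two-sentence deduction citing the two lemmas.
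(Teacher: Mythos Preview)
Your proposal is correct and matches the paper's proof exactly: the paper simply writes ``This follows from Lemmas \ref{lemma:adapted} and \ref{lem:adapted},'' which is precisely the two-step deduction you describe.
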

\begin{proof}
This follows from Lemmas \ref{lemma:adapted} and \ref{lem:adapted}.
\end{proof}
We need the following lemma, which can be found as Lemma~$1.3$ in~\cite{dlab-ringel-2}.
\begin{lemma}
\label{lemma:exts}
For~$\lambda, \mu \in \Lambda$, with~$(\Lambda,\leq)$ adapted, one has
\begin{equation}
\Ext^1(\nabla_{\text{DR}}(\lambda),\nabla_{\text{DR}}(\mu)) \neq 0 \Rightarrow \lambda > \mu.
\end{equation}
\end{lemma}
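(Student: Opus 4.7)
The plan is to prove the contrapositive: assuming $\lambda \not> \mu$, show that $\Ext^1(\nabla_{\tt DR}(\lambda), \nabla_{\tt DR}(\mu)) = 0$. First, by applying $\Hom(-, \nabla_{\tt DR}(\mu))$ to the composition series of $\nabla_{\tt DR}(\lambda)$ and a routine induction on length via the long exact sequence, the problem reduces to verifying $\Ext^1(L(\nu), \nabla_{\tt DR}(\mu)) = 0$ for every composition factor $L(\nu)$ of $\nabla_{\tt DR}(\lambda)$. Any such $\nu$ satisfies $\nu \leq \lambda$, and combined with $\lambda \not> \mu$ this forces $\nu \not> \mu$, since otherwise $\lambda \geq \nu > \mu$ would yield $\lambda > \mu$.

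If $\nu \leq \mu$, the vanishing is immediate from Lemma \ref{lemma:costandard}(2c) applied to $V = \nabla_{\tt DR}(\mu)$. The genuine obstacle---and the step where the adapted hypothesis on $(\Lambda,\leq)$ enters---is the remaining case where $\nu$ and $\mu$ are incomparable. Arguing by contradiction, suppose we have a non-split extension
\[
0 \to \nabla_{\tt DR}(\mu) \to Y \to L(\nu) \to 0.
\]
Since $\nu \not\leq \mu$, no copy of $L(\nu)$ sits inside $\nabla_{\tt DR}(\mu)$; hence any embedding $L(\nu) \hookrightarrow Y$ would compose nontrivially with the quotient $Y \to L(\nu)$ and thereby split the extension. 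It follows that $\socle(Y) = L(\mu)$.

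The crux is now to produce a subcomodule $U \subseteq Y$ to which the adapted property can be applied---one with simple top $L(\nu)$ and simple socle $L(\mu)$. I would take the projective cover $P(\nu)$ of $L(\nu)$ (which exists because $C$ is finite dimensional), lift the natural surjection $P(\nu) \twoheadrightarrow L(\nu) = Y/\nabla_{\tt DR}(\mu)$ to a map $P(\nu) \to Y$ by projectivity, and let $U$ be its image. As a nonzero quotient of $P(\nu)$, the comodule $U$ has simple top $L(\nu)$. Moreover, $U \cap \nabla_{\tt DR}(\mu)$ must be nonzero, for otherwise $U$ would map isomorphically onto $L(\nu)$ and split the extension; together with $\socle(U) \subseteq \socle(Y) = L(\mu)$ this gives $\socle(U) = L(\mu)$. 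The adapted property applied to $U$ then yields some $\tau$ with $\tau > \nu$, $\tau > \mu$, and $[U:L(\tau)] \neq 0$. But the composition factors of $Y$ are those of $\nabla_{\tt DR}(\mu)$ (all $\leq \mu$) together with $L(\nu)$, so $\tau$ is forced to satisfy $\tau \leq \mu$ or $\tau = \nu$---contradicting $\tau > \mu$ and $\tau > \nu$. This contradiction completes the plan.
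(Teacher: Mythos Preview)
Your argument is correct. The paper does not actually prove this lemma; it simply cites it as Lemma~1.3 of Dlab--Ringel~\cite{dlab-ringel-2}. Your proof is essentially the standard one given there: reduce via the composition series of $\nabla_{\tt DR}(\lambda)$ to showing $\Ext^1(L(\nu),\nabla_{\tt DR}(\mu))=0$ for $\nu\not>\mu$, dispatch the case $\nu\le\mu$ by the defining property of $\nabla_{\tt DR}(\mu)$, and in the incomparable case build a subcomodule with simple top $L(\nu)$ and simple socle $L(\mu)$ to which the adapted hypothesis applies, obtaining a composition factor that cannot exist inside $Y$. The use of the projective cover to manufacture $U$ is exactly the right device, and your verification that $\socle(U)=L(\mu)$ is clean.
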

Now we are ready to prove Theorem \ref{theorem:equivalence}.
\begin{proof}[Proof of Theorem \ref{theorem:equivalence}]
  The second part of the theorem is just
  Lemma~\ref{corollary:compatible}. Let us show the first part. Assume
  that~$C$ is D-quasi-hereditary. By Lemma~\ref{lemma:schurian},
  the~$\nabla_{\tt D}(\lambda)=\nabla_{\tt DR}(\lambda)$ are Schurian. Also, by
  Corollary~\ref{corollary:adapted}, the poset~$(\Lambda,\leq)$ is
  adapted. Since by hypothesis all the~$I(\lambda)$ have a filtration by costandard
  comodules, and any coalgebra is the direct sum of its injective
  indecomposables, we get~${}^C C \in \Fscr(\nabla_{\tt
    D})=\Fscr(\nabla_{\tt DR})$, so by definition of
  DR-quasi-hereditary, we are done.

Now assume that~$C$ is DR-quasi-hereditary. By the second equivalent condition in Definition~\ref{definition:drqh}, we get~$I(\lambda) \in \Fscr(\nabla_{\tt DR})$, for all~$\lambda \in \Lambda$. Now assume that
\begin{equation}
\label{eq:globalfilt}
0=F_0 \subset F_1 \subset \ldots \subset F_{k-1} \subset F_k=I(\lambda)
\end{equation}
is a~$\nabla_{\tt DR}$-filtration, i.e.~$F_{p_F}/F_{{p_F}-1} \cong \nabla_{\tt DR}(\lambda_{p_F})$. Since $F_1$ must contain the socle of $I(\lambda)$ we find $F_1=\nabla_{\tt DR}(\lambda)$. 
Assume there exists $j>1$ such that $\lambda_j\not>\lambda$
and let ${p_F}$ be the smallest such $j$. Moreover assume that the filtration \eqref{eq:globalfilt} is chosen with minimal ${p_F}$, among all such filtrations.
If ${p_F}>2$ then $\lambda_{{p_F}-1}>\lambda$ and hence $\lambda_{{p_F}}\not> \lambda_{{p_F}-1}$. The same conclusion holds if ${p_F}=2$ since then $\lambda_{{p_F}-1}=\lambda$.
Then by~Lemma~\ref{lemma:exts}, it follows that the exact sequence
\[
0\rightarrow F_{{p_F}-1}/F_{{p_F}-2}\rightarrow F_{{p_F}}/F_{{p_F}-2}\rightarrow F_{{p_F}}/F_{{p_F}-1}\rightarrow 0
\]
is split. Using this we may create a new filtration
\begin{equation}
0=F_0' \subset F_1' \subset \ldots \subset F_{k-1}' \subset F_k'=I(\lambda)
\end{equation}
such that
\[
F'_j/F_{j-1}'
=
\begin{cases}
F_j/F_{j-1}&\text{if $j\neq {p_F},{p_F}-1$}\\
F_{p_F}/F_{{p_F}-1}&\text{if $j={p_F}-1$}\\
F_{{p_F}-1}/F_{{p_F}}&\text{if $j={p_F}$}\\
\end{cases}
\]
We find $p_{F'}=p_{F}-1$, contradicting the minimality of $p_F$.
\end{proof}
\subsection{Proof of Proposition \ref{prop:convenient}}
Assume first the $C$ is D-quasi-hereditary.  Condition (1) in the statement of Proposition \ref{prop:convenient}  then follows from Definition \ref{definition:dqh}(1) since
${}_C C$ is injective. Condition (2) follows from Lemma \ref{lemma:adapted}.

Conversely assume that Conditions (1)(2) are satisfied. We claim that $C$ is DR-quasi-hereditary, which by Theorem \ref{theorem:equivalence} is sufficient.  We first claim
$\nabla_{\text{DR}}(\lambda)=\nabla_{\text{D}}(\lambda)$. We have in any case $\nabla_{\text{D}}(\lambda)\subset \nabla_{\text{DR}}(\lambda)$. If this not equality then
we must have $\Ext^1(L(\lambda),\nabla_D(\lambda))\neq 0$. By Condition (2) this implies $\lambda>\lambda$ which is a contradiction.

Since $\nabla_{\text{DR}}(\lambda)=\nabla_{\text{D}}(\lambda)$ we obtain from Lemma \ref{lemma:schurian} that $\nabla_{\text{DR}}(\lambda)$ is Schurian. Moreover the ordering
is adapted by Lemma \ref{lem:adapted}. This finishes the proof.

\providecommand{\bysame}{\leavevmode\hbox to3em{\hrulefill}\thinspace}
\providecommand{\MR}{\relax\ifhmode\unskip\space\fi MR }
\providecommand{\MRhref}[2]{%
  \href{http://www.ams.org/mathscinet-getitem?mr=#1}{#2}
}
\providecommand{\href}[2]{#2}

\end{document}